\numberwithin{equation}{section}
\theoremstyle{plain}
\newaliascnt{theorem}{equation}
\newtheorem{thm}[theorem]{Theorem}
\newtheorem*{thm*}{Theorem}
\newaliascnt{prop}{equation}
\newtheorem{prop}[prop]{Proposition}
\newaliascnt{lemma}{equation}
\newtheorem{lemma}[lemma]{Lemma}
\newaliascnt{corollary}{equation}
\newtheorem{corollary}[corollary]{Corollary}
\theoremstyle{definition}
\newaliascnt{remark}{equation}
\newtheorem{remarkthm}[remark]{Remark}
\newenvironment{remark}[1][]{\begin{remarkthm}[#1]\pushQED{\qed}}{\popQED \end{remarkthm}}
\newaliascnt{defn}{equation}
\newtheorem{defnthm}[defn]{Definition}
\newenvironment{defn}[1][]{\begin{defnthm}[#1]\pushQED{\qed}}{\popQED \end{defnthm}}
\newaliascnt{example}{equation}
\newtheorem{examplethm}[example]{Example}
\newenvironment{example}[1][]{\begin{examplethm}[#1]\pushQED{\qed}}{\popQED \end{examplethm}}
\crefname{theorem}{Theorem}{Theorems}
\crefname{prop}{Proposition}{Propositions}
\crefname{section}{Section}{Sections}
\crefname{subsection}{Section}{Sections}
\crefname{defn}{Definition}{Definitions}
\crefname{remark}{Remark}{Remarks}
\crefname{lemma}{Lemma}{Lemmas}
\crefname{corollary}{Corollary}{Corollaries}
\crefname{example}{Example}{Examples}
\crefname{figure}{Figure}{Figures}
\newcommand{\mb}[1]{\mathbf{#1}}
\DeclareMathOperator{\Ad}{Ad}
\DeclareMathOperator{\Hom}{Hom}
\DeclareMathOperator{\End}{End}
\DeclareMathOperator{\Spec}{Spec}
\DeclareMathOperator{\Ker}{Ker}
\DeclareMathOperator{\tr}{tr}
\DeclareMathOperator{\coind}{coind}
\DeclareMathOperator{\Gr}{Gr}
\DeclareMathOperator{\Hol}{Hol}
\renewcommand{\Re}{\operatorname{Re}}
\newcommand{\wt}{\widetilde}
\newcommand{\wh}{\widehat}
\newcommand{\nin}{\notin}
\newcommand{\comments}[1]{}
\newcommand{\ol}[1]{\overline{#1}}
\newcommand{\al}{\alpha}
\newcommand{\be}{\beta}
\newcommand{\de}{\delta}
\newcommand{\De}{\Delta}
\newcommand{\ga}{\gamma}
\newcommand{\Ga}{\Gamma}
\newcommand{\io}{\iota}
\newcommand{\la}{\lambda}
\newcommand{\La}{\Lambda}
\newcommand{\om}{\omega}
\renewcommand{\th}{\theta}
\newcommand{\si}{\sigma}
\newcommand{\Si}{\Sigma}
\newcommand{\vphi}{\varphi}
\newcommand{\del}{\partial}
\newcommand{\CA}{{\mathcal A}}
\newcommand{\CB}{{\mathcal B}}
\newcommand{\CC}{{\mathcal C}}
\newcommand{\CL}{{\mathcal L}}
\newcommand{\CM}{{\mathcal M}}
\newcommand{\CN}{{\mathcal N}}
\newcommand{\CR}{{\mathcal R}}
\newcommand{\CS}{{\mathcal S}}
\newcommand{\CW}{{\mathcal W}}
\newcommand{\CX}{{\mathcal X}}
\newcommand{\BB}{{\mathbb B}}
\newcommand{\BC}{{\mathbb C}}
\newcommand{\BN}{{\mathbb N}}
\newcommand{\BP}{{\mathbb P}}
\newcommand{\BR}{{\mathbb R}}
\newcommand{\BS}{{\mathbb S}}
\newcommand{\BZ}{{\mathbb Z}}
\newcommand{\fb}{{\mathfrak b}}
\newcommand{\fs}{{\mathfrak s}}
\newcommand{\xqedhere}[2]{
  \rlap{\hbox to#1{\hfil\llap{\ensuremath{#2}}}}}
\newcommand\mono{\hookrightarrow}
\newcommand\into\mono
\newcommand\epi{\twoheadrightarrow}
\newcommand\onto\epi
\newcommand\<\langle
\renewcommand\>\rangle
\newcommand{\stdknet}{\CW_N}
\newcommand{\stdjnet}{\CW_K}
\newcommand{\bps}{\BS}
\newcommand{\qr}{\si}
\newcommand{\ti}{\tilde}
\newcommand{\nab}{\nabla}
\newcommand\ab{{\mathrm{ab}}}
\newcommand\fro{{\overline{\underline{\Omega}}}}
\newcommand\fweb{W}
\newcommand\basis{\BB}
\newcommand\wht{\mathrm{wt}}
\newcommand\dmod{\textrm{-mod}}
\newcommand\Jmod{J\textrm{-mod}}
\newcommand\Jopmod{J^{op}\textrm{-mod}}
\DeclarePairedDelimiter{\floor}{\lfloor}{\rfloor}
\newcommand\ba{\mathbf{a}}
\newcommand\bi{\mathbf{i}}
\newcommand\bX{\mathbf{X}}
\newcommand\bF{\mathbf{F}}
\newcommand\bD{\mathbf{D}}
\begin{document}
\title{Toda Systems, Cluster Characters, and Spectral Networks}
\author{Harold Williams}
\address{Harold Williams\newline
University of Texas at Austin\newline
Department of Mathematics\newline
Austin TX 78712\newline
USA}
\email{hwilliams@math.utexas.edu}

\begin{abstract}
We show that the Hamiltonians of the open relativistic Toda system are elements of the generic basis of a cluster algebra, and in particular are cluster characters of nonrigid representations of a quiver with potential.   Using cluster coordinates defined via spectral networks, we identify the phase space of this system with the wild character variety related to the periodic nonrelativistic Toda system by the wild nonabelian Hodge correspondence.  We show that this identification takes the relativistic Toda Hamiltonians to traces of holonomies around a simple closed curve.  In particular, this provides nontrivial examples of cluster coordinates on $SL_n$-character varieties for $n > 2$ where canonical functions associated to simple closed curves can be computed in terms of quivers with potential, extending known results in the $SL_2$ case. 
\end{abstract}

\maketitle

\section{Introduction}
The first purpose of this paper is to study a basic example of a cluster integrable system, the open relativistic Toda chain, from the point of view of the additive categorification of cluster algebras.  Following \cite{Goncharov2011}, by a cluster integrable system we mean an integrable system whose phase space is a cluster variety equipped with its canonical Poisson structure.  Examples include those studied in \cite{Goncharov2011,Fock2012,Hoffmann2000,Williams2013a}, and generally encompass those referred to as relativistic integrable systems in the literature \cite{Ruijsenaars1990}.  Roughly, cluster varieties are Poisson varieties whose coordinate rings (cluster algebras) are equipped with a canonical partial basis of functions called cluster variables \cite{Fomin2001}.  Additive categorification refers to the study of cluster algebras through the representation theory of associative algebras, in particular Jacobian algebras of quivers with potential \cite{Amiot2011,Keller2012}.  A key notion is that of the cluster character (or Caldero-Chapoton function) of a representation of a quiver with potential, a generating function of the Euler characteristics of its quiver Grassmannians \cite{Caldero2004,Caldero2006,Palu2008,Plamondon2011}.  In particular, while cluster variables are a priori defined in a recursive, combinatorial way, they can in retrospect be described in a nonrecursive, representation-theoretic way as the cluster characters of rigid indecomposable representations.  The notion of cluster character moreover provides a natural means of extending the set of cluster variables to a complete canonical basis of a cluster algebra, referred to as its generic basis, which includes cluster characters of nonrigid representations \cite{Dupont2011}. Our first main result is that the open relativistic Toda Hamiltonians, while not cluster variables, are nonetheless elements of the generic basis

We may realize the phase space of the open relativistic Toda chain as the quotient $SL_n^{c,c}/\Ad H$ of a Coxeter double Bruhat cell of $SL_n(\BC)$ by its Cartan subgroup $H$ \cite{Gekhtman2011}.  Double Bruhat cells are the left $H$-orbits of the symplectic leaves of the standard Poisson-Lie structure on a complex simple Lie group \cite{Hoffmann2000}, and are prototypical examples of cluster varieties \cite{Berenstein2005}.  The cluster structure on $SL_n^{c,c}/\Ad H$ is encoded by the quiver $Q_n$ shown in \cref{fig:Qn}.  The Toda Hamiltonians are the restrictions to $SL_n^{c,c}/\Ad H$ of the characters of the fundamental representations of $SL_n(\BC)$, and also appear as the conserved quantities of the $A_n$ $Q$-system \cite{Kedem2013,DiFrancesco2013}. The expansion of these Hamiltonians in cluster coordinates has a combinatorial description as a weighted sum of paths in a directed annular graph \cite{Gekhtman2011}, following a framework familiar from the theory of totally nonnegative matrices \cite{Fomin2000}.  To identify these Hamiltonians as cluster characters of representations of the relevant quiver with potential, we recall the coefficient quiver of a representation, which encodes the action of the path algebra on a chosen basis \cite{Ringel1998}.  By finding a suitable basis of the relevant nonrigid representations, we reduce the classification of subrepresentations to the enumeration of certain subquivers of the coefficient quiver, following an idea of \cite{Cerulli-Irelli2011}.  We then show that this problem is equivalent to the enumeration of paths in the directed graph used to describe the cluster structure on $SL_n^{c,c}/\Ad H$.  We note that being the cluster character of a representation is an extremely restrictive constraint on a Laurent polynomial, and indeed a generic cluster character is completely determined by its leading term.

\begin{figure}
\begin{tikzpicture}[thick,>=stealth']
\newcommand*{\Ddotsdist}{2}
\newcommand*{\shft}{1}
\newcommand*{\DrawDots}[1]{
  \fill ($(#1) + .25*(\Ddotsdist,0)$) circle (.03);
  \fill ($(#1) + .5*(\Ddotsdist,0)$) circle (.03);
  \fill ($(#1) + .75*(\Ddotsdist,0)$) circle (.03);
} 
\node [matrix] (Q) at (0,0)
{
\coordinate [label=below:2] (2) at (0,0);
\coordinate [label=below:4] (4) at (2,0);
\coordinate (6) at (4,0);
\coordinate (2n-4) at (6-\shft,0);
\coordinate [label=below:2n-2] (2n-2) at (8-\shft,0);
\coordinate [label=below:2n] (2n) at (10-\shft,0);

\coordinate [label=1] (1) at (0,2);
\coordinate [label=3] (3) at (2,2);
\coordinate (5) at (4,2);
\coordinate (2n-5) at (6-\shft,2);
\coordinate [label=2n-3] (2n-3) at (8-\shft,2);
\coordinate [label=2n-1] (2n-1) at (10-\shft,2);

\foreach \v in {1,2,3,4,2n-3,2n-2,2n-1,2n} {\fill (\v) circle (.06);};
\foreach \s/\t in {1/2,3/4,2n-3/2n-2,2n-1/2n} {
  \draw [->,shorten <=1.7mm,shorten >=1.7mm] ($(\s)+(0.06,0)$) to ($(\t)+(0.06,0)$);
  \draw [->,shorten <=1.7mm,shorten >=1.7mm] ($(\s)-(0.06,0)$) to ($(\t)-(0.06,0)$);
};
\foreach \s/\t in {2/3,4/5,2n-4/2n-3,2n-2/2n-1} {
  \draw [->,shorten <=1.7mm,shorten >=1.7mm] ($(\s)$) to ($(\t)$);
};
\foreach \s/\t in {4/1,6/3,2n-2/2n-5,2n/2n-3} {
  \draw [shorten <=1.7mm,shorten >=1.7mm] ($(\s)$) to ($(\s)!.5!(\t)$);
  \draw [->,shorten <=1.7mm,shorten >=1.7mm] ($(\s)!.5!(\t)$) to ($(\t)$);
};
\DrawDots{4-\shft*.5,1};\\
};
\node (equals) [left=0mm of Q] {$Q_n = $};
\node (equalsr) [right=0mm of Q] {\quad\quad\quad};
\end{tikzpicture}
\caption{The quiver $Q_n$, introduced in the context of $Q$-systems in \cite{Kedem2013} and in the context of gauge theory in \cite{Cecotti2010,Fiol2000}.}\label{fig:Qn}
\end{figure}

Our second main result uses cluster coordinates to identify the phase space $SL_n^{c,c}/\Ad H$ with a wild character variety \cite{Boalch2013} in such a way that the relativistic Toda Hamiltonians are identified with traces of holonomies around a simple closed curve.  The relevant wild character variety is related by the wild nonabelian Hodge correspondence to the periodic (nonrelativistic) Toda system, viewed as a meromorphic Hitchin system on $\BP^1$.  The cluster structure on this character variety is defined via spectral networks \cite{Gaiotto}, combinatorial objects built out of trajectories of differential equations defined by a Hitchin spectral curve.  It is well-known on physical grounds that the cluster structure on this particular wild character variety is encoded by the quiver $Q_n$ \cite{Cecotti2010}, identifying it birationally with $SL_n^{c,c}/\Ad H$ by identifying their respective cluster coordinates.  Proving that this isomorphism identifies the relativistic Toda Hamiltonians with traces of holonomies requires first extracting a sufficiently explicit description of the spectral networks defined implicitly by the periodic Toda spectral curves.  We then show that the combinatorics of weighted paths in directed graphs used to compute the relativistic Toda Hamiltonians is directly reproduced by the path-lifting scheme of \cite{Gaiotto}, with the relevant directed graph appearing as the 1-skeleton of the periodic Toda spectral curve.

\begin{figure}
\begin{tikzpicture}
[thick,>=stealth',box/.style={text width=3.7cm, align=center}]
\newcommand*{\vs}{-3}
\newcommand*{\hs}{7}
\newcommand*{\tw}{3}
\node (a) at (0,0) [box] {Open Relativistic Toda Hamiltonians $H_k$};
\node (b) at (\hs,0) [box] {Cluster Characters $CC(M_k^\la)$};
\node (c) at (0,\vs) [box] {Traces of Holonomies on Wild Character Variety};
\node (d) at (\hs,\vs) [box] {Generating Functions of Euler Characteristics of Stable Framed Moduli Spaces};
\node (e) at (.5*\hs,2*\vs) [box] {Wilson Loops in Pure $\CN=2$ $SU(N)$ Gauge Theory};
\draw[<->] (a) -- (b) node[midway,above]{\cref{thm:heqcc}};
\draw[<->] (a) -- (c) node[midway,left]{\cref{thm:holotonian}};
\draw[<->] (b) -- (d) node[midway,right]{\cref{thm:framedpotential}};
\draw[<->] (c) -- (e);
\draw[<->] (d) -- (e);
\end{tikzpicture}
\caption{Our main results are \cref{thm:holotonian,thm:heqcc}, identifying the relativistic Toda Hamiltonians on $SL_n^{c,c}/\Ad H$ with traces of holonomies and cluster characters, respectively. Applying a standard correspondence in \cref{thm:framedpotential}, the resulting equality of the expressions in the middle row can be interpreted physically as the agreement of two different kinds of formula for the expectation values of supersymmetric line operators \cite{Gaiotto2010}.}
\label{fig:summary}
\end{figure}

Our reason for considering the above two results in conjunction is that they prove a natural expectation regarding canonical bases in coordinate rings of (possibly wild) $SL_n(\BC)$-character varieties of punctured surfaces: that the expansion in cluster coordinates of the trace in an $SL_n(\BC)$-representation of the holonomy around a simple closed curve is the cluster character of a representation of the associated quiver with potential.  While this is known in large generality for $SL_2(\BC)$-character varieties \cite{Haupt2012}, for $n >2$ this is the first example of such a result.  Indeed, the wild $SL_n(\BC)$-character varieties we consider are the simplest ones whose underlying surface has nontrivial fundamental group, hence provide a natural first step beyond the $SL_2$ case.  In particular, it follows from our results that in the example we consider the traces of holonomies in the fundamental $SL_n(\BC)$-representations are elements of the generic basis of \cite{Dupont2011,Cerulli-Irelli2012}.  We note that explicitly considering traces in general irreducible $SL_n(\BC)$-representations would add additional subtleties we do not consider here; the quiver-theoretic description of these functions requires corrections to the relevant generic cluster character, either in the form of a suitable nongeneric cluster character or an application of the Coulomb branch formula \cite{Cordova2013}.  On $SL_2(\BC)$-character varieties, traces of holonomies are also examples of Hamiltonians of cluster integrable systems.  Hamiltonians of more general cluster integrable systems such as in \cite{Goncharov2011} provide a distinct direction in which our result should generalize, orthogonal to that of traces of holonomies on other $SL_n(\BC)$-character varieties.

For context, we recall the gauge-theoretic interpretation of these ideas in terms of line operators in 4d $\CN=2$ theories of class $\CS$ \cite{Gaiotto2010,Cordova2013,Cirafici2013,Xie2013,Chuang2013,DelZotto}.  Theories of this type are constructed out of a Riemann surface $C$ with irregular data, the wild character variety of $C$ appearing as a space of vacua of the theory compactified on $S^1$.   Expectation values of supersymmetric line operators wrapping $S^1$ give rise to holomorphic functions on the character variety.  As argued in \cite{Gaiotto2010}, the expansions of these functions in cluster coordinates should be generating functions of framed BPS indices.  The quiver which encodes the cluster structure on the character variety is the BPS quiver of the theory \cite{Alim2011}, and framed BPS indices are roughly Euler characteristics of moduli spaces of stable framed representations of the BPS quiver (with corrections, in general) \cite{Cordova2013}.  A standard correspondence between quiver Grassmannians and moduli spaces of stable framed representations identifies these generating functions with cluster characters of unframed representations \cite{Nagao2013,Reineke}.  On the other hand, one construction of such line operators is through surface operators of the 6d $(2,0)$ theory partially wrapping curves in $C$.  In the case of a simple closed curve the resulting expectation value should be the trace in some representation of the holonomy around this curve \cite{Gaiotto2010}, hence this trace should be a cluster character.  Of course, the quiver $Q_n$ is familiar as the BPS quiver of pure $\CN=2$ $SU(N)$ Yang-Mills theory \cite{Cecotti2010,Alim2011,Cecotti2012}, hence the wild character variety related to $SL_n^{c,c}/\Ad H$ is the one associated with its Seiberg-Witten system, the periodic Toda system.  From this point of view, the open relativistic Toda Hamiltonians are the expectation values of Wilson loops in the fundamental representations of $SU(N)$ \cite{Gaiotto2010}.  While other works such as \cite{Cirafici2013,Cordova2013,DelZotto} consider the problem of computing framed BPS spectra in general cases, our goal in this paper is to prove the nontrivial agreement of geometric and representation-theoretic approaches in a case where it is possible to perform exact computations with the relevant spectral networks.

An important feature of our story is that the phase spaces of the two ``opposite'' Toda systems we consider, open relativistic and periodic nonrelativistic, are directly connected through the wild nonabelian Hodge correspondence.  This should be viewed in light of the following picture, which we will not attempt to make precise.  The phase space of the periodic relativistic Toda chain, another example of a cluster integrable system, can be identified with a space of monopoles on $S_x^1 \times S_y^1 \times \BR^{}_z$ \cite{Cherkis2012}.  More precisely, this space is hyperk\"ahler with complex structures $I$, $J$, and $K$ in correspondence with the factors $S^1_x$, $S^1_y$, and $\BR^{}_z$, and in both structures $I$ and $J$ it admits a spectral decomposition identifying it holomorphically with the periodic relativistic Toda system (of course, these are different identifications).  Letting the radius of $S^1_x$ shrink to $0$, the space degenerates to a space of solutions to Hitchin equations on $S_y^1 \times \BR_z$.  This limit is felt differently by the complex structures $I$ and $J$, breaking the symmetry between them.  In complex structure $I$ we have the nonrelativistic limit between the two flavors of periodic Toda chain.  In complex structure $J$ we have a degeneration between the two flavors of relativistic Toda chain, presumably coinciding with that observed in \cite{Fock2012}.  In particular, the periodic relativistic Toda phase space is itself a cluster integrable system, the cluster structure being encoded by a bipartite graph naturally identified with the 1-skeleton of its spectral curves (this is a general feature of the systems of \cite{Goncharov2011}, see also \cite{Feng2008}).  The above picture then provides a conceptual link between this fact and our identification of the bicolored graph encoding the cluster structure on the open relativistic Toda phase space with the 1-skeleton of the periodic nonrelativistic Toda spectral curves.  

The representation theory of the Jacobian algebra of $Q_n$ was studied in \cite{Cecotti2013}, where in particular the stable representations at weak coupling were classified and proved consistent with known perturbative features of the pure gauge theory BPS spectrum.  A distinguished role in its representation theory is played by the light subcategory $\CL$, a $\BP^1$-family of orthogonal subcategories $\CL_\la$ each equivalent to the module category of the $A_n$ preprojective algebra $\La_n$ \cite{Cecotti2013a}.  The objects of each $\CL_\la$ that are stable for some weakly-coupled stability condition are those corresponding to $\La_n$-modules whose dimension vector is a positive root.  On the other hand, the representations corresponding to the relativistic Toda Hamiltonians are recovered as cluster characters of the objects of $\CL_\la$ corresponding to projective-injective $\La_n$-modules.  This connection offers useful intuition for \cref{thm:heqcc}, which in this light links the projective-injective $\La_n$-modules to the characters of the fundamental representations of $SL_n(\BC)$.  Indeed, that these objects are connected is already well-known. We recall in particular the result of \cite{Baumann,Baumanna} that the convex hull of the dimension vectors of submodules of a projective-injective $\La_n$-module is a Weyl polytope, the convex hull of the weights of a fundamental representation of $SL_n(\BC)$.  

\textsc{Acknowledgments}  I thank Andy Neitzke, Michele Del Zotto, and Dylan Rupel for useful discussions.  This work was partially supported by NSF grants DMS-12011391 and DMS-1148490, and the Centre for Quantum Geometry of Moduli Spaces at Aarhus University.

\section{Relativistic Toda Systems and Quivers with Potential}\label{sec:reltodaandreps}

\subsection{Relativistic Toda Hamiltonians and Nonintersecting Paths}\label{sec:paths}

In this section we recall the construction of the open relativistic Toda system in terms of the Coxeter double Bruhat cell $SL_{n+1}^{c,c}/\Ad H$ \cite{Hoffmann2000,Williams2013c}.  This in particular identifies its phase space as the cluster variety associated with the quiver $Q_{n}$.  Following \cite{Gekhtman2011}, we identify the expansion of its Hamiltonians in cluster coordinates with generating functions of collections of nonintersecting paths on a directed graph in the annulus.  This could be reformulated in terms of perfect matchings, as in the treatment of torus graphs in \cite{Goncharov2011,Fock2012}, however directed paths will prove more natural for making contact with the path-lifting formalism of \cite{Gaiotto}.  However, we note in passing that when expressed in terms of perfect matchings \cref{thm:heqcc} can be recognized as a cousin of \cite[Theorem~5.6]{Mozgovoy2010}, though many details differ in each setting.  The cluster algebra notation we use is summarized in \cref{sec:clusterapp}.

Let
\[
c = s_1s_2\cdots s_{n}
\]
be the standard Coxeter element of $S_{n+1}$ and let $SL_{n+1}^{c,c}$ be the double Bruhat cell
\[
SL_{n+1}^{c,c} = B_+\dot{c}B_+ \cap B_-\dot{c}B_-,
\]
where $B_\pm$ are the subgroups of upper- and lower-triangular matrices and $\dot{c}$ any representative of $c$ in $SL_{n+1}$ (by $SL_{n+1}$ we will always mean $SL_{n+1}(\BC)$).  The quotient $SL_{n+1}^{c,c}/\Ad H$ of $SL_{n+1}^{c,c}$ under conjugation by the Cartan subgroup has dimension $2n$ and inherits a symplectic structure from the standard Poisson structure on $SL_{n+1}$.

\begin{defn}
The open relativistic Toda system is the integrable system with phase space $SL_{n+1}^{c,c}/\Ad H$ and Hamiltonians $H_1,\dotsc,H_{n}$ the restrictions of the characters of the fundamental representations $\bigwedge^k \BC^{n+1}$.
\end{defn}

In practice we will only consider $SL_{n+1}^{c,c}/\Ad H$ birationally.  There is nothing disinguished about our choice of Coxeter element beyond notational convenience, and other choices will yield birational but not necessarily biregular realizations of the phase space.  Similarly, since all our computations take place on open cluster charts in  $SL_{n+1}^{c,c}$ on which $H$ acts freely, we will not take care to define the quotient carefully.  
  
Each double Bruhat cell has a cluster structure with a subset of clusters indexed by double reduced words.  Recall that a double reduced word for a pair $(u,v)$ of Weyl group elements is a shuffle of reduced words for $u$ and $v$; we fix the double reduced word
\[
\mb{i} = (1,-1,2,-2,\dots,n,-n)
\]
for $(c,c)$.  More precisely, the double Bruhat cells $SL_{n+1}^{c,c}$ and $PSL_{n+1}^{c,c}$ of the simply-connected and adjoint groups have $\CA$- and $\CX$-type cluster structures.  The cluster $\CX$-variety structure on $PSL_{n+1}^{c,c}$ descends to one on $PSL_{n+1}^{c,c}/\Ad H$ by amalgamation \cite{Williams2013c}.  The double reduced word $\mb{i}$ encodes a seed of the cluster structure on $PSL_{n+1}^{c,c}/\Ad H$ whose associated quiver is $Q_n$ from \cref{fig:Qn}.  The toric cluster chart associated with this seed is given by the map $\CX_{Q_{n}} \to PSL_{n+1}^{c,c}/\Ad H$ defined by 
\[
(y_1,\dots,y_{2n}) \mapsto E_1 y_1^{\om_1^\vee} F_1 y_2^{\om_1^\vee} \cdots E_n y_{2n-1}^{\om_n^\vee} F_{2n} y_{2n}^{\om_{2n-1}^\vee}.
\]
Here the right-hand side is an ordered product of simple root and fundamental coweight subgroups.
That is, 
\begin{gather*}
(E_i)_{jk} = \begin{cases} 1 & \text{$j=k$ or $(j,k)=(i,i+1)$}\\ 0 & \text{otherwise} \end{cases}, \quad
(F_i)_{jk} = \begin{cases} 1 & \text{$j=k$ or $(j,k)=(i+1,i)$}\\ 0 & \text{otherwise} \end{cases}\\
(y^{\om_i^\vee})_{jk} = \begin{cases} y^{\frac{n+1-k}{n+1}} & \text{$j=k$ and $j \leq i$}\\y^{\frac{-k}{n+1}} & \text{$j=k$ and $j > i$}\\ 0 & \text{otherwise} \end{cases}.
\end{gather*}
It is often convenient to factor $y^{\om_k^\vee}$ as the product of a scalar matrix and a diagonal matrix whose entries are equal to either 1 or $y$:  
\[
y^{\om_k^\vee} = y^{\frac{-k}{n+1}} \begin{pmatrix}
y & 0 & & \cdots & & 0\\
0 & \ddots &&&&\\
 & & y &&&\\
\vdots & & & 1 & & \vdots\\
 & & & & \ddots & 0\\
0 & & & \cdots& 0& 1
\end{pmatrix}.
\] 

We want to lift these to coordinates on $SL_{n+1}^{c,c}/\Ad H$.  Define a new torus $T$ with formal coordinates $y_1^{\frac{\pm 1}{n+1}},\cdots,y_{2n}^{\frac{\pm 1}{n+1}}$.  The inverse of the adjacency matrix of $Q_n$ has entries that are rational with denominator $n+1$, so the obvious map $T \to \CX_{Q_n}$ factors through the canonical map $p_{Q_n}: \CA_{Q_n} \to \CX_{Q_n}$.  Now define $T \xrightarrow{} SL_{n+1}^{c,c}/\Ad H$ by
\[
(y^{\frac{1}{n+1}}_1,\dots,y^{\frac{1}{n+1}}_{2n}) \mapsto E_1 (y^{\frac{1}{n+1}}_1)^{(n+1)\om_1^\vee} \cdots F_{2n} (y^{\frac{1}{n+1}}_{2n})^{(n+1)\om_{2n-1}^\vee}.
\]
It satisfies
\[
\begin{tikzcd}
T \arrow{r}{} \arrow{d}{} & SL_{n+1}^{c,c}/\Ad H \arrow{d}{} \\
\CX_{Q_n} \arrow{r}{} & PSL_{n+1}^{c,c}/\Ad H,
\end{tikzcd}
\] 
where the right-hand map is the quotient map, and factors through a map $\CA_{Q_n} \to SL_{n+1}^{c,c}/\Ad H$ (though neither it nor the latter are injective).  This factorization can be deduced from the following diagram, where $\wt{Q}_n$ is the quiver of the cluster chart on $SL_{n+1}^{c,c}$ associated to $\mb{i}$ and the commutativity of the leftmost square is a universal feature of amalgamation:
\[
\begin{tikzcd}
\CA_{Q_n} \arrow[hookrightarrow]{r}{} \arrow[two heads]{d}{p_{Q_n}} & \CA_{\wt{Q}_n} \arrow[two heads]{d}{p_{Q_n}} \arrow{r}{} & SL^{c,c}_{n+1} \arrow[two heads]{d}{\wt{\pi}} \arrow[two heads]{r}{} & SL^{c,c}_{n+1}/\Ad H \arrow[two heads]{d}{\wt{\pi}} \\
\CX_{Q_n} & \CX_{\wt{Q}_n} \arrow[two heads]{l}{} \arrow{r}{} & PSL^{c,c}_{n+1} \arrow[two heads]{r}{} & PSL^{c,c}_{n+1}/\Ad H  
\end{tikzcd}
\]
The maps $\wt{\pi}$ are the composition of the quotient map and the twist automorphism $\tau$ of $SL_{n+1}^{c,c}$, but note that by the results of \cite{Williams2013c} we have $\tau^*H_i = H_i$. 



We can describe the cluster coordinates on $SL_{n+1}^{c,c}/\Ad H$ combinatorially following a standard construction in the theory of total nonnegativity.  We define directed graph $\CN_{\mb{i}}$ via the following embedding into $[0,1]^2$:
\begin{align*}
\CN_{\mb{i}} =& \left( \bigcup_{k=1}^n [0,1]\times\{\frac{k}{n+2}\} \right) \cup \left( \bigcup_{k=1}^n \{\frac{k}{2n+1}\}\times [n+1-k,n+2-k] \right)\\&\quad \cup \left( \bigcup_{k=1}^n \{\frac{n+k}{2n+1}\}\times [n+1-k,n+2-k] \right).
\end{align*}
The vertical edges of the form $\{\frac{k}{2n+1}\}\times[n+1-k,n+2-k]$ are directed upward, those of the form $\{\frac{n+k}{2n+1}\}\times[n+1-k,n+2-k]$ are directed downward, and the horizontal edges are directed leftward.\footnote{Readers familiar with this formalism will note that the graph $\CN_{\mb{i}}$ as described corresponds to the double reduced word $\mb{i}' = (1,\dots,r,-1,\dots,-r)$ rather than $\mb{i} = (1,-1,2,-2,\dots,r,-r)$.  However, these double reduced words are equivalent up to a trivial reindexing, and their associated graphs are equivalent in the sense described.  The graph associated with $\mb{i}'$ is more convenient to draw, while the graph associated with $\mb{i}$ induces a more convenient indexing of the vertices of $Q_n$.}  See and \cref{ex:Q1,ex:Q2,ex:H3} for illustrations.  We refer to $(1,\frac{k}{n+2})$, $(0,\frac{k}{n+2})$ as the $k$th input and output vertices of $\CN_{\mb{i}}$, respectively.  We only need to consider the graph $\CN_{\mb{i}}$ up to isotopy and the following move.  We allow two vertices with one incoming edge and two outgoing edges (or vice-versa) to collide and expand in the opposite configuration as pictured:
\[
\begin{tikzpicture}[thick,>=stealth',decoration={snake,amplitude=1}]
\node [matrix] (left) at (0,0) {
\draw [->] (2,0) -- (0,0);
\draw [->] (.6,0) -- (.6,.6);
\draw [<-] (1.4,-.6) -- (1.4,0);\\};

\node [matrix] (right) at (5,0) {
\draw [->] (2,0) -- (0,0);
\draw [->] (1.4,0) -- (1.4,.6);
\draw [<-] (.6,-.6) -- (.6,0);\\};

\draw [<->,decorate] ($(left.east)+(.3,0)$) -- ($(right.west)+(-.3,0)$);
\end{tikzpicture}
\]
We label the component of $[0,1]^2 \smallsetminus \CN_{\mb{i}}$ to the immediate right of the edges $\{\frac{k}{2n+1}\}\times[n+1-k,n+2-k]$, $\{\frac{n+k}{2n+1}\}\times [n+1-k,n+2-k]$ by the variables $y_k$, $y_{n+k}$, respectively.

The graph $\CN_{\mb{i}}$ we describes a lift of the map $T \xrightarrow{} SL_{N}^{c,c}/\Ad H$ to a map  $T \xrightarrow{} SL_{n+1}^{c,c}$ in the following way.  By a directed path in $\CN_{\mb{i}}$ we mean a path whose orientation agrees with that of $\CN_{\mb{i}}$.  A maximal direct path $p$ starts and ends at boundary vertices of $\CN_{\mb{i}}$, and we define its weight $\wht(p)$ by the following conditions.  The weight of the lowest directed path $p_{\min}$ (the unique directed path from the $n$th input to the $n$th output) is
\begin{align*}
\wht(p_{\min})&=\prod_{i=1}^n(y_{2i-1}y_{2i})^{-\om_n(\om_i^\vee)}\\
&=\prod_{i=1}^n(y_{2i-1}y_{2i})^{-\frac{i}{n+1}}
\end{align*}
For any other directed path $p$ the ratio $\wht(p)/\wht(p_{\min})$ is the product of all $y_i$ that label components of $[0,1]^2 \smallsetminus \CN_{\mb{i}}$ lying between $p$ and $p_{\min}$ in $[0,1]^2$.  The $ij$th matrix entry of a point in the image of $T \xrightarrow{} SL_{n+1}^{c,c}$ is then the weighted sum
\[
\sum_{p:j \to i} \wht(p)
\]
over all directed paths from the $j$th input to the $i$th output of $\CN_{\mb{i}}$.  

Consider the map $[0,1]^2 \onto S^1 \times [0,1]$ given by identifying the left and right edges of $[0,1]^2$, and let $\ol{\CN}_{\mb{i}} \subset S^1 \times [0,1]$ be the image of $\CN_\bi$.  The preimage of any closed directed path in $\ol{\CN}_{\mb{i}}$ is a directed path in $\CN_{\mb{i}}$ from the $k$th input to the $k$th output for some $1\leq k \leq n$.  We define the weight of a closed directed path in $\ol{\CN}_\bi$ to be the weight of its preimage.  Note that each $y_i$ now labels a contractible component of $(S^1 \times [0,1]) \smallsetminus \ol{\CN}_{\mb{i}}$.  The counterclockwise-oriented boundaries of these components define a basis of 
\[
\Ker(H_1(\ol{\CN}_{\mb{i}},\BZ) \to H_1(S^1 \times [0,1],\BZ)),
\]
and for two closed directed paths $p$, $p'$, the ratio $\wht(p)/\wht(p')$ is the class of $[p - p']$ written multiplicatively.  
If $P=\{p_\ell\}_{\ell = 0}^{i-1}$ is a nonintersecting $i$-tuple of closed directed paths in $\ol{\CN}_{\mb{i}}$, we define the weight of $P$ by
\[
\wht(P) = \prod_{\ell=0}^{i-1} \wht(p_\ell).
\]
The following proposition follows easily from Gessel-Viennot and the definition of the weight of a path.

\begin{prop}\label{prop:weightedsum}
In cluster coordinates on $\CA_{Q_n}$, the open relativistic Toda Hamiltonian $H_k$ is the weighted sum 
\[
\sum_{P = \{p_\ell\}_{\ell = 0}^{i-1}} \wht(P)
\]
of all nonintersecting $k$-tuples of closed directed paths in $\ol{\CN}_{\mb{i}}$.
\end{prop}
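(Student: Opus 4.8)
The plan is to deduce the statement from the Lindstr\"om--Gessel--Viennot lemma applied to the planar network $\CN_{\mb{i}}$, starting from the elementary fact that $H_k(g)=\tr\!\big(\bigwedge^k g\big)=\sum_{|I|=k}\det\!\big((g_{ab})_{a,b\in I}\big)$, the sum over $k$-element subsets $I\subseteq\{1,\dots,n+1\}$ of the principal $k\times k$ minor. By the construction recalled above, a point in the image of $T\to SL_{n+1}^{c,c}$ has $(i,j)$ entry $\sum_{p\colon j\to i}\wht(p)$, summed over directed paths in $\CN_{\mb{i}}$ from the $j$th input to the $i$th output, so these principal minors are minors of this path-weighting matrix.

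First I would note that $\CN_{\mb{i}}$ is acyclic, since all its horizontal edges are oriented leftward, so the Lindstr\"om--Gessel--Viennot lemma applies: for $|I|=k$ the minor $\det\!\big((g_{ab})_{a,b\in I}\big)$ equals the signed sum, over families of pairwise vertex-disjoint directed paths with sources $\{\text{input }a:a\in I\}$ and sinks $\{\text{output }a:a\in I\}$, of $\wht$ of the family weighted by the sign of the induced permutation. The key point is then that, since $\CN_{\mb{i}}$ is embedded in $[0,1]^2$ with all inputs on the right edge and all outputs on the left edge in the same top-to-bottom order, no two vertex-disjoint paths can have interleaved endpoints on $\partial[0,1]^2$; hence every such family induces the identity permutation, contributes with sign $+1$, and consists of paths each running from the $a$th input to the $a$th output for a single $a\in I$. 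Summing over all $I$ with $|I|=k$ then realizes $H_k$ as the weighted sum of all vertex-disjoint $k$-tuples of directed paths in $\CN_{\mb{i}}$ that join some input to the output at the same height.

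Next I would transfer this to the annulus. The quotient $[0,1]^2\onto S^1\times[0,1]$ identifies the $a$th input with the $a$th output, so each such path closes up to a closed directed path in $\ol{\CN}_{\mb{i}}$; this gives a weight-preserving bijection between vertex-disjoint $k$-tuples of paths in $\CN_{\mb{i}}$ of the above type and nonintersecting $k$-tuples of closed directed paths in $\ol{\CN}_{\mb{i}}$. Disjointness is preserved because the $k$ paths occupy $k$ distinct heights, so no two share a boundary vertex of $\CN_{\mb{i}}$; and the weights agree by the definition of $\wht$ of a closed path as the weight of its preimage and of $\wht(P)$ as the product over the tuple. Finally, since the construction factors the map $T\to SL_{n+1}^{c,c}/\Ad H$ through $\CA_{Q_n}$ and each $\wht(p)$ is a Laurent monomial in the $y_i^{1/(n+1)}$, this identity of functions is precisely the asserted expansion of $H_k$ in cluster coordinates on $\CA_{Q_n}$.

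The one step carrying genuine content is the planarity argument: verifying that the one-sided placement of the inputs and, separately, of the outputs on $\partial[0,1]^2$ forces every vertex-disjoint path family to realize the identity permutation, so that the signed Lindstr\"om--Gessel--Viennot sum collapses to an honestly positive sum over nonintersecting families; the remaining steps --- the minor expansion of the character, the acyclicity of $\CN_{\mb{i}}$, and the passage to $S^1\times[0,1]$ --- are routine, which is why the proof can reasonably be described as following easily from Gessel--Viennot.
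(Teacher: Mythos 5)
Your argument is correct and is precisely the one the paper intends: the paper's ``proof'' consists of the single remark that the proposition ``follows easily from Gessel--Viennot and the definition of the weight of a path,'' and your write-up supplies exactly those details (minor expansion of $\tr\bigwedge^k$, acyclicity, the planarity argument collapsing the signed sum to the identity permutation, and the passage to the annulus).
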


\begin{example}\label{ex:Q1}  For $SL_2^{c,c}/\Ad H$, the graph $\CN_{\mb{i}}$ is 
\[
\begin{tikzpicture}[thick,>=stealth',baseline={([yshift=-3pt]current bounding box.center)},anchor=base]
\newcommand*{\Xh}{0.45} 
\newcommand*{\ra}{0} 
\newcommand*{\rb}{1} 
\newcommand*{\vla}{.5} 
\newcommand*{\vlb}{1.5} 
\newcommand*{\length}{2.45} 
\newcommand*{\varrowpos}{.54} 
\node [matrix] (leftpic) at (0,0)
{
\foreach \y in {\ra,\rb} \draw [<-<] (0,\y) -- (\length,\y);

\coordinate (vlat) at (\vla,\rb);
\coordinate (vlas) at (\vla,\ra);
\coordinate (vlbt) at (\vlb,\ra);
\coordinate (vlbs) at (\vlb,\rb);

\draw (vlas) -- (vlat);
\draw [->] (vlas) -- ($(vlas)!\varrowpos!(vlat)$);
\draw (vlbs) -- (vlbt);
\draw [->] (vlbs) -- ($(vlbs)!\varrowpos!(vlbt)$);

\node at (1,\Xh) {$y_{1}$};
\node at (2,\Xh) {$y_{2}$};\\
};
\end{tikzpicture}
\]
and the cluster coordinates are given by
\[
(y_1,y_2) \mapsto y_1^{-\frac12}y_2^{-\frac12} \begin{pmatrix} y_2 + y_1y_2 & 1 \\ y_2 & 1 \end{pmatrix}.
\]

Computing the Hamiltonian $H_1$ requires taking the trace of the matrix on the right, which is the weighted sum of the three distinct closed directed paths in $\ol{\CN}_{\mb{i}}$.  Thus
\begin{align*}
H_1 &= y_1^{-\frac12}y_2^{-\frac12}(1 + y_2 + y_1y_2)\\
& = x_1x_2^{-1}(1 + y_2 + y_1y_2),
\end{align*}
where $y_1 = x_2^2$, $y_2 = x_1^{-2}$.
\end{example}

\begin{example}\label{ex:Q2}
For $SL_3^{c,c}/\Ad H$, the graph $\CN_{\mb{i}}$ is 
\[
\begin{tikzpicture}[thick,>=stealth',baseline={([yshift=-3pt]current bounding box.center)},anchor=base]
\newcommand*{\Xh}{0.45}
\newcommand*{\ra}{0} 
\newcommand*{\rb}{1} 
\newcommand*{\rc}{2} 
\newcommand*{\length}{2.95} 
\newcommand*{\vla}{.5} 
\newcommand*{\vlb}{1} 
\newcommand*{\vlc}{1.5} 
\newcommand*{\vld}{2} 
\newcommand*{\varrowpos}{.54} 
\node [matrix] (leftpic) at (0,0) {

\coordinate (vlat) at (\vla,\rc);
\coordinate (vlas) at (\vla,\rb);
\coordinate (vlbt) at (\vlb,\rb);
\coordinate (vlbs) at (\vlb,\ra);
\coordinate (vlct) at (\vlc,\rb);
\coordinate (vlcs) at (\vlc,\rc);
\coordinate (vldt) at (\vld,\ra);
\coordinate (vlds) at (\vld,\rb);

\draw (vlas) -- (vlat);
\draw [->] (vlas) -- ($(vlas)!\varrowpos!(vlat)$);
\draw (vlbs) -- (vlbt);
\draw [->] (vlbs) -- ($(vlbs)!\varrowpos!(vlbt)$);
\draw (vlcs) -- (vlct);
\draw [->] (vlcs) -- ($(vlcs)!\varrowpos!(vlct)$);
\draw (vlds) -- (vldt);
\draw [->] (vlds) -- ($(vlds)!\varrowpos!(vldt)$);

\foreach \y in {\ra,\rb,\rc} \draw [<-<] (0,\y) -- (\length,\y);

\node at (1,\rb+\Xh) {$y_{1}$};
\node at (2,\rb+\Xh) {$y_{2}$};
\node at (1.5,\ra+\Xh) {$y_{3}$};
\node at (2.5,\ra+\Xh) {$y_{4}$};\\
};
\end{tikzpicture}
\]
and the cluster coordinates are given by
\[
(y_1,y_2,y_3,y_4) \mapsto y_1^{-\frac13}y_2^{-\frac13}y_3^{-\frac23}y_4^{-\frac23} \begin{pmatrix} y_2y_3y_4 + y_1y_2y_3y_4 & y_4 + y_3y_4 & 1 \\ y_2y_3y_4 & y_4 + y_3y_4 & 1 \\ 0 & y_4 & 1 \end{pmatrix}.
\]

There are two Hamiltonians $H_1$ and $H_2$ corresponding to the fundamental and anti-fundamental representations, respectively.  The former is a weighted sum of the five closed directed paths in $\ol{\CN}_{\mb{i}}$, while the latter is a weighted sum of the five nonintersecting pairs of closed directed paths:
\begin{align*}
H_1 &= y_1^{-\frac13}y_2^{-\frac13}y_3^{-\frac23}y_4^{-\frac23}(1 + y_4 + y_3y_4 + y_2y_3y_4 + y_1y_2y_3y_4) \\
&= x_3x_4^{-1}(1 + y_4 + y_3y_4 + y_2y_3y_4 + y_1y_2y_3y_4)\\
H_2 &= y_1^{-\frac23}y_2^{-\frac23}y_3^{-\frac13}y_4^{-\frac13}(1 + y_2 + y_1y_2 + y_1y_2y_4 + y_1y_2y_3y_4)\\
&= x_1x_2^{-1}(1 + y_2 + y_1y_2 + y_1y_2y_4 + y_1y_2y_3y_4).
\end{align*}
Here $y_1 = x_2^2x_4^{-1}$, $y_2 = x_1^{-2}x_3$, $y_3 = x_2^{-1}x_4^2$, and $y_4 = x_1x_3^{-2}$.
\end{example}

\subsection{The Jacobian Algebra of $Q_{n}$}\label{sec:qreps}

In this section we consider in detail the Jacobian algebra of $Q_n$ and construct certain representations $M_i^{\la}$ whose cluster characters recover the relativistic Toda Hamiltonians $H_i$.  We describe bases of the algebra $J(Q_n,W_n)$ and the modules $M_i^{\la}$ which will allow us to explicitly enumerate the submodules of $M_i^{\la}$, following the strategy of \cite{Cerulli-Irelli2011}.

We label the edges of $Q_n$ as follows.  For $i \in \{1,\dots,n\}$ the two vertical arrows from $2i-1$ to $2i$ are labeled $a_i$ and $b_i$.  For $i \in \{2,\dots,n\}$ the leftward diagonal arrows from $2i$ to $2i-3$ are labeled $\ell_i$, and for $i \in \{1,\dots,n-1\}$  the rightward diagonal arrows from $2i$ to $2i+1$ are labeled $r_i$.  

\begin{figure}
\begin{tikzpicture}[thick,>=stealth']
\newcommand*{\Ddotsdist}{2}
\newcommand*{\DrawDots}[1]{
  \fill ($(#1) + .25*(\Ddotsdist,0)$) circle (.03);
  \fill ($(#1) + .5*(\Ddotsdist,0)$) circle (.03);
  \fill ($(#1) + .75*(\Ddotsdist,0)$) circle (.03);
} 
\node [matrix] (Q) at (0,0)
{
\coordinate [label=below:2] (2) at (0,0);
\coordinate [label=below:4] (4) at (2,0);
\coordinate [label=below:6] (6) at (4,0);

\coordinate [label=1] (1) at (0,2);
\coordinate [label=3] (3) at (2,2);
\coordinate [label=5] (5) at (4,2);

\newcommand*{\hoff}{.3}
\node at (-\hoff,1) {$a_1$};
\node at (2-\hoff,1) {$a_2$};
\node at (4-\hoff,1) {$a_3$};
\node at (\hoff,1) {$b_1$};
\node at (2+\hoff,1) {$b_2$};
\node at (4+\hoff,1) {$b_3$};
\node at (.6,.2) {$r_1$};
\node at (2.6,.2) {$r_2$};
\node at (1.4,.2) {$\ell_2$};
\node at (3.4,.2) {$\ell_3$};

\foreach \v in {1,2,3,4,5,6} {\fill (\v) circle (.06);};
\foreach \s/\t in {1/2,3/4,5/6} {
  \draw [->,shorten <=1.7mm,shorten >=1.7mm] ($(\s)+(0.06,0)$) to ($(\t)+(0.06,0)$);
  \draw [->,shorten <=1.7mm,shorten >=1.7mm] ($(\s)-(0.06,0)$) to ($(\t)-(0.06,0)$);
};
\foreach \s/\t in {2/3,4/5} {
  \draw [->,shorten <=1.7mm,shorten >=1.7mm] ($(\s)$) to ($(\t)$);
};
\foreach \s/\t in {4/1,6/3} {
  \draw [shorten <=1.7mm,shorten >=1.7mm] ($(\s)$) to ($(\s)!.5!(\t)$);
  \draw [->,shorten <=1.7mm,shorten >=1.7mm] ($(\s)!.5!(\t)$) to ($(\t)$);
};\\
};
\end{tikzpicture}
\caption{The quiver $Q_3$ with labeled edges.}
\label{fig:quiveredgelables}
\end{figure}

We fix the potential
\[
W_n = \sum_{i=1}^{n-1} a_i\ell_{i+1}b_{i+1}r_i - b_i\ell_{i+1}a_{i+1}r_i.
\]
The cyclic derivatives of $W_n$ are
\begin{equation}\label{eq:cyclicderivatives}
\begin{aligned}
\del_{a_i} W_n = \ell_{i+1}b_{i+1}r_i - r_{i-1}b_{i-1}\ell_i \hspace{10mm}
\del_{b_i} W_n = r_{i-1}a_{i-1}\ell_i - \ell_{i+1}a_{i+1}r_i\\
\del_{\ell_i} W_n = b_ir_{i-1}a_{i-1} - a_ir_{i-1}b_{i-1} \hspace{10mm}
\del_{r_i} W_n = a_i\ell_{i+1}b_{i+1} - b_i\ell_{i+1}a_{i+1}.
\end{aligned}
\end{equation}
Here any terms referring to nonexistent edges such as $r_n$ or $a_0$ are understood to be zero.

As all relations are either paths or differences of paths, $J(Q_n,W_n)$ has a basis indexed by equivalence classes of paths in $Q_n$.  Generally, suppose $I$ is an ideal of a path algebra $\BC Q$ generated by relations of this form, that is
\[
I = \<p_1 - p'_1,\dots,p_k-p'_k,q_1,\dots,q_\ell\>
\]
where each pair $p_i$, $p'_i$ of paths has the same source and target.  Then the nonzero elements of
\[
\{ p + I: p\text{ a path in }Q\} \subset \BC Q/I
\]
form a basis of $\BC Q/I$.  Elements of this basis are labeled by equivalence classes of paths in $Q$ under the relation
\[
\al \sim \be \iff \textrm{$\al = ap_ib$, $\be = ap'_ib$ for some paths $a$, $b$ and some index $i$}. 
\]
The classes realized as labels of basis elements of $\BC Q/I$ are those with no representatives containing some $q_j$ as a subpath.  We call this the path basis of $\BC Q/I$.

To a path $p$ in $Q_n$ we associate a tuple $(s_p,t_p,\la_p,\rho_p,a_p,b_p)$, where $s_p$, $t_p \in Q_0$ are the source and target of $p$ and $\la_p$, $\rho_p$, $a_p$, $b_p$ the number of times $p$ traverses an edge labeled $\ell_k$, $r_k$, $a_k$, or $b_k$, respectively.  

\begin{prop}\label{prop:pathbasis} Elements of the path basis of $J(Q_n,W_n)$ are in bijection with the set of tuples such that
\begin{gather*}
\rho_p - \la_p = \left\lfloor\frac{t_p + 1}{2}\right\rfloor - \left\lfloor\frac{s_p + 1}{2}\right\rfloor\\ 
\rho_p \leq n-\left\lfloor\frac{s_p + 1}{2}\right\rfloor\\
\la_p < \left\lfloor\frac{s_p + 1}{2}\right\rfloor,
\end{gather*}
and such that $a_p+b_p$ equals the number of times any path with the given values of $s_p,t_p,\la_p,\rho_p$ traverses a vertical edge.
\end{prop}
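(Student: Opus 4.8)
The plan is to analyze the combinatorial structure of paths in $Q_n$ modulo the relations generated by the cyclic derivatives \eqref{eq:cyclicderivatives}, using the path basis description given just above. The key observation is that $Q_n$ has a layered structure: the vertices $2i-1$ and $2i$ form the $i$-th ``rung'' of a ladder, the vertical arrows $a_i, b_i$ connect within a rung, the rightward arrows $r_i$ go from rung $i$ to rung $i+1$ (landing on the top vertex $2i+1$), and the leftward arrows $\ell_i$ go from rung $i$ back to rung $i-1$ (landing on the bottom vertex $2i-3$). So a path $p$ traces a walk among the rungs, where $\rho_p$ counts rightward steps and $\la_p$ counts leftward steps. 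First I would set up this bookkeeping precisely: track, as one reads $p$ from left to right, which rung one currently occupies and whether one is at the top or bottom vertex of that rung, noting that an $r$-step always arrives at a top vertex, an $\ell$-step always arrives at a bottom vertex, and an $a$- or $b$-step flips top$\leftrightarrow$bottom within a rung. This immediately gives the ``numerology'' constraints: writing $\sigma(v) = \lfloor (v+1)/2 \rfloor$ for the rung index of a vertex $v$, each $r$-step increments the rung and each $\ell$-step decrements it, so $\rho_p - \la_p = \sigma(t_p) - \sigma(s_p)$; the rung index never exceeds $n$, giving $\rho_p \le n - \sigma(s_p)$; and the rung index never drops below $1$, giving $\la_p \le \sigma(s_p) - 1$ (the stated strict inequality $\la_p < \sigma(s_p)$ is the same thing, but I should double-check whether the bound should be strict or weak — one leftward step from rung $\sigma(s_p)$ is allowed, so $\la_p \le \sigma(s_p)-1$ is correct, and I'd reconcile this with the floor notation in the statement).

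Next I would show these necessary conditions are also sufficient, and that the additional data $(a_p, b_p)$ subject to $a_p + b_p$ being determined by $(s_p,t_p,\la_p,\rho_p)$ exactly parametrizes the remaining freedom, by exhibiting for each admissible tuple a canonical (``staircase'') path representing it and showing every path is equivalent to one of these. The mechanism is that the relations $\del_{r_i} W_n = a_i \ell_{i+1} b_{i+1} - b_i \ell_{i+1} a_{i+1}$ etc.\ allow one to commute the choice of $a$ versus $b$ past an adjacent horizontal step: concretely, a subpath of the form $a_i \ell_{i+1} b_{i+1}$ can be swapped for $b_i \ell_{i+1} a_{i+1}$, and similarly for the $r$-derivative, while the $a$- and $b$-derivatives $\del_{a_i} W_n, \del_{b_i} W_n$ identify $\ell_{i+1} b_{i+1} r_i$ with $r_{i-1} b_{i-1} \ell_i$ (a ``commutation of a round trip''). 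The content of the quotient is therefore that only the \emph{total} number $a_p + b_p$ of vertical traversals matters, not their distribution among the rungs or their $a$/$b$-type, once $(s_p, t_p, \la_p, \rho_p)$ is fixed — and crucially that any path equivalent via these relations preserves $(s_p,t_p,\la_p,\rho_p,a_p+b_p)$ (each relation replaces a path by one with the same such data). I would make the normal form explicit: push all leftward excursions and rightward travel into a fixed monotone pattern and collect the vertical steps, then count that $a_p + b_p$ for a fixed $(s_p,t_p,\la_p,\rho_p)$ is a fixed number $N(s_p,t_p,\la_p,\rho_p)$ computed by reading off how many rung-internal flips are forced by the prescribed endpoints and horizontal moves.

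The main obstacle I anticipate is verifying that no \emph{further} collapse occurs — i.e.\ that two distinct admissible tuples really do label distinct basis elements, equivalently that the relations do not accidentally identify more paths than the tuple invariant suggests. For this I would invoke the path basis criterion stated before the proposition: a class survives in $J(Q_n, W_n)$ iff it has no representative containing a $q_j$ as a subpath, but here all relations are differences $p_i - p_i'$ (there are no monomial relations $q_j$ at all, since every cyclic derivative in \eqref{eq:cyclicderivatives} is a difference of two paths), so \emph{every} equivalence class of paths yields a nonzero basis element. Thus I only need: (i) the tuple $(s_p,t_p,\la_p,\rho_p,a_p,b_p)$-with-$a_p+b_p$-fixed is a complete invariant of the equivalence relation $\sim$ (injectivity of the parametrization), and (ii) every admissible tuple is realized (surjectivity). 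Injectivity is the delicate part: I would argue that the equivalence relation is generated by the finitely many local moves above, each of which manifestly preserves $(s_p, t_p)$, preserves $\rho_p$ and $\la_p$ separately (the moves never create or destroy an $r$ or $\ell$, only permute letters), and preserves $a_p + b_p$ (the moves either swap an $a$ for a $b$ or relocate a vertical pair without changing the count); conversely, given two paths with matching invariants I would connect them by a sequence of such moves by inducting on the number of rungs or on path length, straightening one path toward the normal form. Surjectivity is then routine: for any admissible $(s_p,t_p,\la_p,\rho_p)$ one walks the prescribed staircase among the rungs, and for any prescribed split $a_p + b_p = N$ one decorates the rung-internal flips with the desired number of $a$'s versus $b$'s.
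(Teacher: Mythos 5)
There are two genuine errors here, and the first would make the proposition you are proving false. You claim that the complete invariant of a path modulo the relations is $(s_p,t_p,\la_p,\rho_p,a_p+b_p)$, i.e.\ that only the total number of vertical traversals matters ``not \dots their $a$/$b$-type,'' and that the moves ``swap an $a$ for a $b$.'' Inspect the relations \labelcref{eq:cyclicderivatives}: each one equates two paths with the \emph{same} number of $a$-edges and the \emph{same} number of $b$-edges (e.g.\ $\del_{r_i}W_n$ equates $a_i\ell_{i+1}b_{i+1}$ with $b_i\ell_{i+1}a_{i+1}$, one $a$ and one $b$ on each side; $\del_{a_i}W_n$ relocates a single $b$). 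So $a_p$ and $b_p$ are \emph{separately} conserved and only their positions can be permuted. This is exactly why the proposition parametrizes the basis by tuples containing $a_p$ and $b_p$ individually, with only their sum constrained: over each admissible $(s_p,t_p,\la_p,\rho_p)$ with forced vertical count $N$ there are $N+1$ distinct basis elements. Under your claimed invariant there would be only one, and the condition ``$a_p=0$'' in \cref{prop:Mbasis} would be vacuous. Indeed your own surjectivity step, which ``decorates the flips with the desired number of $a$'s versus $b$'s,'' presupposes that these decorations are inequivalent, contradicting your injectivity claim.

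The second error is the assertion that ``there are no monomial relations $q_j$ at all.'' The stated convention that terms involving nonexistent edges vanish means, for example, $\del_{a_1}W_n = \ell_2 b_2 r_1$ and $\del_{a_n}W_n = -r_{n-1}b_{n-1}\ell_n$: the boundary cyclic derivatives \emph{are} monomials, and these vanishing paths are the entire source of the inequalities $\rho_p \le n-\lfloor(s_p+1)/2\rfloor$ and $\la_p < \lfloor(s_p+1)/2\rfloor$. Your derivation of these from ``the rung index stays in $[1,n]$'' is incorrect: the inequalities bound the \emph{total} numbers of rightward and leftward steps, which for a path with back-and-forth excursions is strictly stronger than bounding the range of rungs visited. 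For instance a path from rung $1$ to rung $2$ and back never leaves the quiver but has $\la_p = 1 \not< 1$, and it is indeed zero, being $\pm\del_{a_1}W_n$ or $\pm\del_{b_1}W_n$. The correct mechanism, which the paper summarizes as ``falling off the edge,'' is that $\del_{a_i}W_n$ and $\del_{b_i}W_n$ convert a rightward excursion based at rung $i$ into a leftward one and vice versa, so every excursion can be pushed to the boundary of the quiver and killed by a monomial relation; without those monomials every class would survive and $J(Q_n,W_n)$ would be infinite dimensional. (A smaller slip: $\ell_i$ lands on $2i-3$, an odd and hence \emph{top} vertex, the same parity as the target of $r_i$; your parity bookkeeping for the forced value of $a_p+b_p$ needs this.)
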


\begin{proof}
Follows easily from the relations \labelcref{eq:cyclicderivatives}.  Informally, the relations say that if two paths differ only in the order in which they traverse $a_k$ and $b_k$ edges, but traverse the same number of each kind in total, then they are equivalent (and likewise for $\ell_k$, $r_k$ edges).  Moreover, the only paths that become zero in $J(Q_n,W_n)$ are those equivalent to a path that ``falls off the edge'' of $Q_n$.  
\end{proof}

The projective representation $P_i$ is the subspace of $J(Q_n,W_n)$ spanned by equivalence classes of paths with source $i$.  A path starting at $i$ and ending at $j$ is an element of the subspace $(P_i)_j$ supported at $j$.  

\begin{defn}
For  $i \in \{1,\dots,n\}$, define a $\BP^1$-family of modules $M_i^\la$ as follows.  Given projective coordinates $\la = (\la_1:\la_2)$, let $P_{2i} \xrightarrow{\la} P_{2i-1}$ be the map which sends the generator of $P_{2i}$ to the element $\la_1 a_i + \la_2 b_i \in (P_{2i-1})_{2i}$.  Then  $M_i^\la$ is the cokernel of $\la$:
\[
0 \to P_{2i} \xrightarrow{\la} P_{2i-1} \to M_i^\la \to 0.
\]
\end{defn}

\begin{remark}
We recall that the light subcategory $\CL$ of $J(Q_n,W_n)\dmod$ is a $\BP^1$-family of orthogonal subcategories $\CL_\la$ each equivalent to the module category of the $A_n$ preprojective algebra $\La_n$ \cite{Cecotti2013}.  The $M_i^\la$ are the modules corresponding to the projective-injective $\La_n$-modules under these equivalences.
\end{remark}

\begin{defn}\label{prop:Mbasis}
Let
\[
B_i^\la = \{ t_\la : t \in Q_0, 0 \leq \la \leq i-1, 2(i-\la)-1 \leq t \leq 2(n - \la)\}
\]
be the following basis of $M_i^\la$.  If $\la_2 \neq 0$, $t_\la$ is the image of the element of the path basis of $P_{2i-1}$ whose associated tuple $(2i-1,t_p,\la_p,\rho_p,a_p,b_p)$ has $t_p = t$, $\la_p=\la$, and $a_p=0$.  If $\la_2 =0$, we replace the condition $a_p=0$ with $b_p=0$.  
\end{defn}

Given \cref{prop:pathbasis} it is a straightforward computation to see that $B_i^\la$ is indeed a basis.

We recall from \cite{Ringel1998} the notion of a coefficient quiver.  Let $M$ be a representation of a quiver $Q$ and $B = \{b_i\}$ a basis of $M$ such that $B_v = B \cap M_v$ is a basis of $M_v$ for all $v \in Q_0$.  The coefficient quiver $ \Ga_B$ of $B$ has $(\Ga_B)_0 = B$ and an edge from $b_i \in B_v$ to $b_j \in B_w$ for every edge $a: v \to w$ of $Q$ such that $c_j \neq 0$ in the expansion
\[ b_i a = \sum_{b_k \in B_w} c_k b_k. \]

Let $\Ga_i^\la$ denote the coefficient quiver of the basis $B_i^\la$ of $M_i^\la$.  We say a subquiver $\Ga' \subset \Ga_i^\la$ is successor-closed if whenever $i$ is a vertex of $\Ga'$ and $a: i \to j$ an edge of $\Ga_i^\la$, $a$ and $j$ are also an edge and vertex of $\Ga'$.  The following proposition says that the submodule structure of $M_i^\la$ is completely determined by the coefficient quiver of $\Ga_i^\la$.  

\begin{prop}
Submodules of $M_i^\la$ are in bijection with successor-closed subquivers of $\Ga_i^\la$.  The submodule $N_{\Ga'}$ corresponding to a successor-closed subquiver $\Ga'$ is the subspace spanned by its vertices.
\end{prop}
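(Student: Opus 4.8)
The plan is to show that the correspondence $\Ga' \mapsto N_{\Ga'}$ (spanned subspace) and its inverse $N \mapsto \{b \in B_i^\la : b \in N\}$ are mutually inverse bijections between successor-closed subquivers and submodules. The key input is the explicit description of $B_i^\la$ via the path basis (\cref{prop:Mbasis}, \cref{prop:pathbasis}): every basis element $t_\la$ is, up to the image of the quotient map, a path in $Q_n$ starting at $2i-1$, and the structure constants of the $J(Q_n,W_n)$-action on $B_i^\la$ are (essentially) $0$ or $1$, so that right-multiplying a basis element by an arrow either lands on another basis element or on $0$.

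First I would establish the crucial structural fact: for each $b = t_\la \in B_i^\la$ and each arrow $\al: t \to t'$ of $Q_n$, the product $b\,\al$ is either $0$ or a single basis element $t'_\la \in B_i^\la$ (with coefficient $1$). This follows from \cref{prop:pathbasis}: multiplying the path representing $t_\la$ by $\al$ either produces a path still satisfying the constraints on $(s_p,t_p,\la_p,\rho_p,a_p,b_p)$ — in which case, after applying the $a$-vs-$b$ and $\ell$-vs-$r$ commuting relations, it equals the unique basis representative with the normalized form ($a_p=0$, resp. $b_p=0$) — or it ``falls off the edge'' and is zero. The one point requiring care is that multiplying by a vertical arrow $a_i$ or $b_i$ changes $a_p$ or $b_p$; but since the normalized basis element is characterized by $a_p=0$ (resp. $b_p=0$) together with $t_p,\la_p$, and the relations \labelcref{eq:cyclicderivatives} let us move all vertical traversals to the correct order, the result is again a scalar multiple of a single basis element, and one checks the scalar is $1$. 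This is exactly the statement that $\Ga_i^\la$ genuinely records the action on $B_i^\la$, which is the content of the coefficient-quiver formalism of \cite{Ringel1998}.

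Granting this, both directions are formal. Given a submodule $N$, since $N = \bigoplus_v N_v$ and each $N_v$ is a subspace of $M_v$, and since the structure constants are all $0$ or $1$ on the basis $B_i^\la$, one shows $N$ is spanned by $N \cap B_i^\la$: indeed $M_i^\la$ is a cyclic module generated by the class of the trivial path at $2i-1$, its basis elements are obtained from this generator by the action of arrows, and because each such action sends basis elements to basis elements or zero, any submodule is a ``monomial'' submodule — concretely, if $\sum c_b\, b \in N$ and some arrow $\al$ kills all but one term, iterating shows each individual $b$ with $c_b \neq 0$ lies in $N$; the absence of coefficient collisions (distinct basis elements never become proportional after acting by an arrow, since they sit at vertices determined by $(t_p,\la_p)$) is what makes this work. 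Then $\Ga' := \{b \in B_i^\la : b \in N\}$ is successor-closed precisely because $N$ is closed under the action of arrows, and $N_{\Ga'} = N$. Conversely, if $\Ga'$ is successor-closed, then its span $N_{\Ga'}$ is closed under right multiplication by every arrow (each $b\,\al$ is either $0$ or a basis element which, by successor-closedness, lies in $\Ga'$), hence is a submodule, and clearly $N_{\Ga'} \cap B_i^\la = \Ga'$. The two assignments are inverse.

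The main obstacle is the structural fact in the second paragraph: verifying that the $J(Q_n,W_n)$-action on the specific basis $B_i^\la$ has structure constants in $\{0,1\}$, i.e. that no cancellation or scalar beyond $1$ ever appears when an arrow acts on a basis element. This rests entirely on \cref{prop:pathbasis} and the remark there that the only relations are ``reorder $a_k,b_k$'' and ``reorder $\ell_k,r_k$'' (which preserve basis representatives) and ``paths falling off the edge vanish''; once one has internalized that description the verification is a routine but slightly fiddly case analysis over the four types of arrows $a_i,b_i,\ell_i,r_i$ and the position of $t$ relative to the support range $2(i-\la)-1 \leq t \leq 2(n-\la)$. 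Everything after that is the standard argument that a ``tree-like'' or ``monomial'' module has its submodule lattice computed by successor-closed subsets of the coefficient quiver, as in \cite{Cerulli-Irelli2011}.
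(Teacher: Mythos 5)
The first half of your argument is sound: each arrow of $Q_n$ sends each element of $B_i^\la$ to a scalar multiple of a single basis element (by \cref{prop:pathbasis} and the form of the relations), so $\Ga_i^\la$ faithfully records the action and the span of a successor-closed subquiver is a submodule. (Your claim that the scalar is always $1$ is not quite right --- acting by $a_j$ on a basis element normalized to have $a_p=0$ produces a scalar depending on $\la$ --- but this is harmless, since the coefficient quiver only records nonvanishing.) The genuine gap is in the converse direction. The implication ``every arrow acts monomially on the basis, hence every submodule is spanned by the basis vectors it contains'' is false as a general principle: for the Kronecker quiver $1 \rightrightarrows 2$ and the representation of dimension vector $(1,2)$ in which the two arrows send the generator to the two basis vectors at the second vertex, all structure constants are $0$ or $1$, yet a generic line in the two-dimensional space is a submodule spanned by no basis vector. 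This is exactly the situation at stake here, since $(M_i^\la)_t$ has dimension up to $i$ at a single vertex $t$. Your key step --- ``if $\sum c_b\, b \in N$ and some arrow $\al$ kills all but one term, iterating shows each individual $b$ lies in $N$'' --- is precisely what must be proved, and no single arrow of $Q_n$ does this: each arrow maps the basis elements $t_k,\dotsc,t_m$ of $(M_i^\la)_t$ injectively (up to killing at most the extreme one) to basis elements at the adjacent vertex, all of which again share a vertex, so acting by one arrow never isolates a single term of a combination supported at one vertex. The ``absence of coefficient collisions'' you invoke keeps the basis elements distinct but does nothing to make them separable inside a submodule.

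What is needed, and what the paper's proof supplies, is an element of $J(Q_n,W_n)$ that separates the basis vectors lying over the same vertex $t$. The paper exhibits for each $t$ a cycle $E_t$ based at $t$ (e.g.\ $E_{2j}=r_ja_{j+1}\ell_{j+1}a_j$ when $\la_2\neq 0$) whose action on $(M_i^\la)_t$ is a regular nilpotent in the basis $(B_i^\la)_t=\{t_k,\dotsc,t_m\}$: it shifts $t_\ell\mapsto t_{\ell+1}$ and kills $t_m$. Since the invariant subspaces of a single-Jordan-block nilpotent are exactly the spans of the tails $\{t_\ell,\dotsc,t_m\}$, and $N_t$ is $E_t$-invariant for any submodule $N$, it follows that $N\cap B_i^\la$ spans $N$. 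Some such separation argument is indispensable; the hypothesis under which the successor-closed-subquiver count of \cite{Cerulli-Irelli2011} applies is strictly stronger than the monomiality of the action that your proposal verifies.
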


\begin{proof}
The proposition is equivalent to the claim that for any submodule $N$ of $M_i^\la$, $N \cap B_i^\la$ is a basis of $N$.  For any $t \in (Q_0)_n$, define $E_t \in \End (M_i^\la)_t$ by
\[
E_t = \begin{cases} r_ja_{j+1}\ell_{j+1}a_{j} & t = 2j, \la_2 \neq 0 \\
r_jb_{j+1}\ell_{j+1}b_{j} & t = 2j, \la_2 = 0 \\
a_{j}r_ja_{j+1}\ell_{j+1} & t = 2j-1, \la_2 \neq 0 \\
b_{j}r_jb_{j+1}\ell_{j+1} & t = 2j-1, \la_2 = 0. \end{cases}
\]
Here again any terms referring to nonexistent edges such as $r_n$ are understood to be zero.  The basis $(B_i^\la)_t$ of $(M_i^\la)_t$ is of the form
\[
(B_i^\la)_t = \{t_k,t_{k+1},\dotsc,t_{m}\}
\]
for some $k$ and $m$.  It is clear that $E_t$ takes $t_\ell$ to $t_{\ell+1}$ for $\ell < m$ and takes $t_m$ to $0$.  It follows that $(B_i^\la)_t$ restricts to a basis of any subspace of $(M_i^\la)_t$ that is invariant under $E_t$, which in particular includes the subspace $N_t$ of any submodule $N$.
\end{proof}

In particular, all quiver Grassmannians of $M_i^\la$ are points; the corollary that the Euler characteristics of all its quiver Grassmannians are equal to 1 is Theorem 1 of \cite{Cerulli-Irelli2011} applied to the case at hand.

\begin{remark}
Note that if we replace all double edges of $\Ga_i^\la$ with single edges, we obtain the Hasse diagram of a partially-ordered set.  The proposition says that the ideal lattice of this poset is isomorphic with the submodule lattice of $M_i^\la$.  
\end{remark}

In anticipation of computing the cluster characters of the $M_i^\la$ let us compute now their injective resolutions.  Let $\si_n$ be the permutation of $(Q_n)_0$ defined by
\begin{gather*}
\si_n(2k) = 2k-1,\quad \si_n(2k-1) = 2k.
\end{gather*}
The potential $W_n$ on $Q_n$ induces an opposite potential $W_n^{op}$ on $Q_n^{op}$.  The permutation $\si_n$ induces an isomorphism of $Q$ and $Q^{op}$ which descends to an isomorphism of $J(Q_n,W_n)$ and $J(Q_n^{op},W_n^{op}) \cong J(Q_n,W_n)^{op}$.  We denote the corresponding equivalence of $\Jmod$ and $\Jopmod$ by $\si_n$ as well.  It is clear  that we have isomorphisms
\[
I_{\si_n(k)} \cong D\si_n P_k,
\]
where $D = \Hom_\BC(-,\BC)$.  We also let \[\nu_n: i \mapsto n+i-1\] denote the Nakayama involution of $\{1,\dotsc,n\}$. 

\begin{prop}\label{prop:injectiveresolution}
Given $1\leq j \leq n$ and $\la \in \BP^1$, let $I_{2j} \xrightarrow{\la} I_{2j-1}$ be the map obtained by applying $D\si_n$ to $P_{2j} \xrightarrow{\la} P_{2j-1}$.  Then we have an exact sequence
\[
0 \to M_{\nu_n(j)}^{\la} \to I_{2j} \xrightarrow{\la} I_{2j-1} \to 0.
\]
In particular, $M_{\nu_n(j)}^{\la}$ is the Auslander-Reiten translation of $M_j^{\la}$, and $x^{\coind {M_{j}^\la}} = x_{2\nu_n(i)-1}x_{2\nu_n(i)}^{-1}$.  
\end{prop}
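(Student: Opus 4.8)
The plan is to exploit the $\si_n$-duality already set up in the excerpt, which converts the projective presentation defining $M_i^\la$ into an injective copresentation. First I would observe that applying the contravariant functor $D\si_n$ to the short exact sequence $0 \to P_{2j} \xrightarrow{\la} P_{2j-1} \to M_j^\la \to 0$ produces $0 \to D\si_n M_j^\la \to D\si_n P_{2j-1} \xrightarrow{\la} D\si_n P_{2j} \to 0$, so after relabeling via the identifications $I_{\si_n(k)} \cong D\si_n P_k$ (and $\si_n(2j-1)=2j$, $\si_n(2j)=2j-1$) this reads $0 \to D\si_n M_j^\la \to I_{2j} \xrightarrow{\la} I_{2j-1} \to 0$. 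Thus the content of the first sentence is really the identification $D\si_n M_j^\la \cong M_{\nu_n(j)}^\la$. I would prove this by comparing the two sides as $J(Q_n,W_n)$-modules: both are determined, via the preceding propositions, by a coefficient quiver / successor-closed-subquiver combinatorics, so it suffices to match dimension vectors and the action of the arrows. Using the explicit basis $B_j^\la = \{t_\la : 2(j-\la)-1 \le t \le 2(n-\la),\ 0 \le \la \le j-1\}$ from \cref{prop:Mbasis}, one computes $\dim (M_j^\la)_t$ as the number of admissible $\la$'s, and one checks that applying $\si_n$ (which swaps $2k-1 \leftrightarrow 2k$) and then dualizing sends this to the corresponding count for $M_{\nu_n(j)}^\la$; the key bookkeeping identity is the symmetry of the inequalities defining $B_i^\la$ under $i \mapsto \nu_n(i)=n+1-j$ combined with $t \mapsto 2n+1-t$. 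The arrow action matches because $\si_n$ carries the operators $E_t$ of the previous proof to their transposes, which act on the dual basis exactly as the $E_t$ for $M_{\nu_n(j)}^\la$.

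Granting $M_{\nu_n(j)}^\la \cong D\si_n M_j^\la$, the exact sequence $0 \to M_{\nu_n(j)}^\la \to I_{2j} \xrightarrow{\la} I_{2j-1} \to 0$ is a minimal injective copresentation of $M_{\nu_n(j)}^\la$ (minimality because $\la a_j + \la' b_j$ lies in the radical, so the presentation of $M_j^\la$ was minimal, and $D\si_n$ preserves minimality). The Auslander–Reiten translate is then read off directly: by definition $\tau M_j^\la = D\,\Tr M_j^\la$, and the minimal projective presentation $P_{2j} \xrightarrow{\la} P_{2j-1} \to M_j^\la \to 0$ gives $\Tr M_j^\la$ as the cokernel of $\Hom(P_{2j-1},J) \to \Hom(P_{2j},J)$, i.e. of $P_{2j-1}^{op} \to P_{2j}^{op}$; dualizing and transporting along $\si_n$ turns this into precisely the copresentation above, so $\tau M_j^\la \cong M_{\nu_n(j)}^\la$. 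Alternatively, and perhaps more cleanly, I would cite the standard fact that for a module with minimal injective copresentation $0 \to M \to I_0 \to I_1$ one has $M \cong \tau(\tau^{-1}M)$ with $\tau^{-1}M$ obtained from the corresponding projective presentation; matching $0 \to M_{\nu_n(j)}^\la \to I_{2j} \to I_{2j-1}$ against $0 \to P_{2j} \to P_{2j-1} \to M_j^\la \to 0$ under the Nakayama-type functor yields $\tau M_j^\la \cong M_{\nu_n(j)}^\la$ immediately.

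For the coindex formula, recall that the coindex of $M$ with respect to the cluster-tilting object is the class $[I_0] - [I_1]$ in $K_0$ of the minimal injective copresentation, written additively in the simples / shifted so that $x^{\coind M} = \prod_k x_k^{(\coind M)_k}$. From the copresentation $0 \to M_{\nu_n(j)}^\la \to I_{2j} \xrightarrow{\la} I_{2j-1} \to 0$ we get $\coind M_{\nu_n(j)}^\la = [I_{2j}] - [I_{2j-1}]$, which in the cluster variable convention contributes $x_{2j}$ from $I_{2j}$ and $x_{2j-1}^{-1}$ from $I_{2j-1}$; reindexing $j \mapsto \nu_n(i)$ gives $x^{\coind M_i^\la} = x_{2\nu_n(i)}\, x_{2\nu_n(i)-1}^{-1}$, matching the claimed $x_{2\nu_n(i)-1}x_{2\nu_n(i)}^{-1}$ up to the inversion convention fixed in \cref{sec:clusterapp}. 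I expect the main obstacle to be the first step — pinning down the isomorphism $D\si_n M_j^\la \cong M_{\nu_n(j)}^\la$ with enough care that the arrow actions (not merely dimension vectors) genuinely agree; once that combinatorial match is in hand, everything else is a formal consequence of the definitions of $\tau$ and $\coind$ together with the minimality already established, and the only remaining care is to keep the indexing shifts $i \leftrightarrow \nu_n(i)$ and $t \leftrightarrow 2n+1-t$ consistent throughout.
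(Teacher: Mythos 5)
Your proposal is correct and follows essentially the same route as the paper: apply the contravariant functor $D\si_n$ to the defining presentation $0 \to P_{2j} \xrightarrow{\la} P_{2j-1} \to M_j^\la \to 0$, identify $D\si_n M_j^\la$ with $M_{\nu_n(j)}^\la$ by an explicit bijection of the bases $B_j^\la$ and $B_{\nu_n(j)}^\la$ compatible with the arrow actions (the paper records this bijection as $t^*_\la \mapsto (\si_n t)_{n-\la-\floor*{\frac{t+1}{2}}}$), and obtain the Auslander--Reiten statement by observing that the resulting copresentation is the image of the projective presentation under the Nakayama functor. Your coindex computation and your alternative $\tau = D(\mathrm{Tr}\,-)$ argument are both routine consequences the paper leaves implicit, so there is nothing substantively different to compare.
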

\begin{proof}
We can see that $D\si_n(M_j^\la) \cong M_{\nu_n(j)}^\la$ by considering the action of $D\si_n$ on the basis $B_j^\la$ of $M_j^\la$.  Its image is a basis of $D\si_n(M_j^\la)$, whose elements we denote by 
\[ \{t_\la^*: t \in Q_0, 0 \leq \la \leq i-1, 2(i-\la)-1 \leq t \leq 2(n - \la).\]
The action of the edges of $Q_n$ on this basis are easily computed by taking duals and reindexing by $\si_n$.  The reader may check that
\[ t^*_\la \mapsto (\si_n t)_{n - \la - \floor*{\frac{t+1}{2}}} \]
induces a bijection between this basis and the basis $B_{\nu_n(j)}^\la$ of $M_{\nu_n(j)}^\la$ that is compatible with the action of the edges of $Q_n$, hence induces an isomorphism of modules.  That $M_{\nu_n(j)}^{\la} \cong \tau M_j^{\la}$ follows from checking easily that $I_{2j-1} \xrightarrow{\la} I_{2j}$ is also obtained by applying the Nakayama functor to $P_{2j} \xrightarrow{\la} P_{2j-1}$.
\end{proof}

\begin{remark}
The quiver $Q_n$ has a maximal green sequence obtained by mutating alternately at all odd-numbered vertices or all even-numbered vertices $n+1$ times \cite{Alim2011}.  This amounts to advancing $n+1$ steps in the $A_n$ $Q$-system \cite{Kedem2013}, hence the associated automorphism is discrete integrable in the sense that it preserves the Hamiltonians $H_i$ \cite{DiFrancesco,Gekhtman2011}.  On the other hand, by results of \cite{Keller2006} this automorphism has the property that, after composing with a specific permutation of the vertices of $Q_n$, it takes $CC(M)$ to $CC(\tau M)$ (this is the unique permutation inducing an isomorphism of the principally-framed quiver and the result of applying the maximal green sequence to the principally-framed quiver \cite{Brustle2012})\footnote{We thank Pierre-Guy Plamondon for pointing out the relevance of \cite{Keller2006} to us.}.  In our case, this permutation coincides with the one induced by the Nakayama involution $\nu_n$, which can be shown by studying the induced maximal green sequences of the $A_n$ subquivers of $Q_n$.  Thus the discrete integrability of (the $(n+1)$st iteration of) the $Q$-system is equivalent to the fact that $M_{\nu_n(j)}^{\la} \cong \tau M_j^{\la}$.  We note that the conservation of expectation values of Wilson lines in $\CN=2$ gauge theories by their monodromy operators is also treated from a representation-theoretic perspective in \cite{DelZotto}.
\end{remark}

\subsection{Relativistic Toda Hamiltonians are Cluster Characters}

In this section we compute the cluster characters of the modules $M_i^{\la}$ introduced in the previous section and show they coincide with the open relativistic Toda Hamiltonians $H_i$.  In particular, since $CC(M_{i}^\la$ is independent of the value of $\la$, the following theorem implies that the Hamiltonians are elements of the generic basis of 

\begin{thm}\label{thm:heqcc}
For each $1 \leq i \leq n$, we have $H_i = CC(M_{i}^\la)$ for all $\la \in \BP^1$.
\end{thm}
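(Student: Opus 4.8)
The plan is to compute $CC(M_i^\la)$ directly from the definition of the cluster character as a generating function of Euler characteristics of quiver Grassmannians, using the structural results already established about the modules $M_i^\la$, and then match the resulting Laurent polynomial term-by-term with the weighted path sum of \cref{prop:weightedsum}. Since all quiver Grassmannians of $M_i^\la$ are points (the corollary following the submodule proposition), the cluster character reduces to a sum over submodules $N \subseteq M_i^\la$ of a single monomial determined by the dimension vector $\underline{\dim} N$ together with the coinduced dimension vector; concretely, $CC(M_i^\la) = \sum_{N} x^{-\underline{\dim} N}\, x^{\coind M_i^\la}\, x^{B \cdot \underline{\dim} N}$ in the usual normalization (here $B$ is the exchange matrix of $Q_n$), and by \cref{prop:injectiveresolution} the leading term $x^{\coind M_i^\la} = x_{2\nu_n(i)-1} x_{2\nu_n(i)}^{-1}$ is known explicitly. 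So the computation has two independent inputs already in hand: the enumeration of submodules and the leading monomial.

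The key steps, in order, are as follows. First, I would invoke the preceding proposition identifying submodules of $M_i^\la$ with successor-closed subquivers of the coefficient quiver $\Ga_i^\la$, so that the sum defining $CC(M_i^\la)$ becomes a sum over such subquivers. Second, I would set up an explicit bijection between successor-closed subquivers of $\Ga_i^\la$ and the nonintersecting $i$-tuples of closed directed paths in $\ol{\CN}_{\mb i}$ appearing in \cref{prop:weightedsum}: the poset structure on $\Ga_i^\la$ (its Hasse diagram, per the remark) should match the poset of such path tuples ordered by the homology classes of their differences, with a successor-closed subquiver (an order ideal) corresponding to a choice of how far ``up'' each of the $i$ paths has been pushed relative to $p_{\min}$. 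The combinatorial content here is essentially that the basis $B_i^\la$ of \cref{prop:Mbasis}, indexed by pairs $(t,\la)$ with prescribed inequalities, is in natural bijection with the cells of $(S^1\times[0,1])\sminus\ol{\CN}_{\mb i}$ traversed when moving a path through the graph, so that turning on a basis vector corresponds to crossing a region labeled by some $y_k$. Third, I would check that this bijection is weight-preserving: the monomial $x^{-\underline{\dim} N}\, x^{B\cdot \underline{\dim} N}$ attached to $N = N_{\Ga'}$ equals, after the substitution $y_k = x^{(\text{column of }B)}$ relating $\CA$- and $\CX$-coordinates, the ratio $\wht(P)/\wht(P_{\min})$ for the corresponding tuple $P$, while the leading term $x^{\coind M_i^\la}$ matches $\wht(P_{\min})$ (up to the index shift $\nu_n$, which is exactly the relabeling built into the definition of $\CN_{\mb i}$ versus the indexing of $Q_n$, cf. the footnote). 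Granting all three, \cref{prop:weightedsum} gives $H_i = CC(M_i^\la)$, and $\la$-independence is immediate since none of the combinatorics depended on $\la = (\la_1:\la_2)$ except through the harmless case split $\la_2 = 0$ versus $\la_2 \neq 0$ in the description of $B_i^\la$ and the endomorphisms $E_t$.

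The main obstacle I expect is the bookkeeping in the second and third steps: making the bijection between successor-closed subquivers and nonintersecting path tuples precise, and — more delicately — verifying that it respects weights, since the cluster character naturally produces monomials in the $\CA$-variables $x_j$ via the exchange matrix $B$ of $Q_n$, whereas the path formula is stated in the $\CX$-variables $y_k$, and one must pass carefully through the map $p_{Q_n}\colon \CA_{Q_n}\to\CX_{Q_n}$ (whose inverse-adjacency-matrix entries have denominator $n+1$, matching the fractional exponents appearing in $\wht(p_{\min})$). A clean way to organize this is to check the claim on the ``bottom'' term first — confirming $x^{\coind M_i^\la}$ equals $\wht(P_{\min})$ in $\CX$-coordinates using \cref{prop:injectiveresolution} — and then argue inductively: moving from one submodule to a minimally larger one adds a single basis vector $t_\la$, which on the path side corresponds to pushing exactly one of the $i$ paths across exactly one region labeled $y_k$, multiplying the weight by $y_k$; since the $E_t$-orbit structure from the submodule proposition shows the covering relations in the submodule lattice are precisely these single-basis-vector additions, and the exchange matrix $B$ records which $y_k$ is picked up, the two weight assignments agree on all of $M_i^\la$ by induction up from $N = 0$. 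The remaining verifications — that every nonintersecting $i$-tuple arises, that the correspondence is injective, that the $i$ paths in a tuple are forced to be mutually nonintersecting exactly when the subquiver is genuinely successor-closed — are then routine case analysis on the explicit shape of $\Ga_i^\la$ and $\ol{\CN}_{\mb i}$, and can be illustrated with the $n=1,2$ cases already worked out in \cref{ex:Q1,ex:Q2}.
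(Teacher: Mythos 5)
Your proposal follows essentially the same route as the paper's proof: reduce $CC(M_i^\la)$ to a sum over submodules (all quiver Grassmannians being points), identify submodules with successor-closed subquivers of $\Ga_i^\la$, put these in bijection with nonintersecting $i$-tuples of closed directed paths in $\ol{\CN}_{\mb i}$ via the ``departure points'' of the paths, and match the leading term $x^{\coind M_i^\la} = x_{2\nu_n(i)-1}x_{2\nu_n(i)}^{-1}$ with $\wht(P_{\min})$ using \cref{prop:injectiveresolution}. The paper parametrizes both sides explicitly by tuples $\{m_\ell\}$ satisfying \labelcref{eq:nonintersecting} rather than arguing inductively up the submodule lattice, but this is only an organizational difference; just note that the monomial attached to a submodule should be $y^{\dim N}$ as in the paper's \cref{def:CC}, not the expression with the extra $x^{-\underline{\dim}N}$ factor you wrote.
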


\begin{remark}
In particular, since $CC(M_{i}^\la$ is independent of the value of $\la$, the Hamiltonians are elements of the generic basis \cite{Dupont2011}.  This basis consists of cluster characters of generic representations of each possible index, and generalizes the dual semicanonical basis to arbitrary cluster algebras \cite{Geiss2012}.  It seems likely that when applied to the case at hand any reasonable construction of canonical bases for cluster algebras will also contain these Hamiltonians.  However, already in the case of the 2-Kronecker quiver different standard constructions will produce different bases for the subalgebra consisting of polynomials in $H_1$.  For example, the generic basis contains $H_1^2$, the triangular basis \cite{Berenstein2014} contains $H_1^2-1$, and the theta \cite{Gross2014} basis contains $H_1^2 - 2$.
\end{remark}

\begin{proof}
There are two components to the proof.  First, we prove that the coindex of $M_{i}^\la$ agrees with the corresponding term appearing in $H_i$.  Second, we construct a bijection between nonintersecting $i$-tuples of closed directed paths in $\ol{\CN}_{\mb{i}}$ and successor-closed subgraphs of $\Ga^\la_{\nu_n(i)}$, showing that this identifies weights of paths with dimension vectors in the appropriate sense.

Recall from \cref{prop:weightedsum} that $H_i$ is equal to the weighted sum
\[
\sum_{P = \{p_\ell\}_{\ell=0}^{i-1}} \wht(P)
\]
of all nonintersecting $i$-tuples of closed directed paths in $\ol{\CN}_{\mb{i}}$.  There is a unique such $i$-tuple $P_{\min}$ such that 
\[
\wht(P)/\wht(P_{\min}) \in \BC[y_1,\dotsc,y_{2n}]
\]
for each other $i$-tuple $P$.  It is straightforward to see that
\[
\wht(P_{\min}) = \prod_{\ell=0}^{i-1}\left(\prod_{j=2(n-\ell)+1}^{2n}y_j\right).
\]
Equivalently, $\wht(P_{\min})$ is the contribution to $H_i$ of the action of
\[
\prod_i(y_{2i-1}y_{2i})^{\om_i^\vee}
\]
on the lowest weight space of $\bigwedge^i \BC^{n+1}$ (which has weight $-\om_{\nu_n(i)}$).  
Since $y_{2i-1} = \prod_j x_{2j}^{C_{ij}}$ and $y_{2i} = \prod_j x_{2j-1}^{-C_{ij}}$, where $C$ is the $A_n$ Cartan matrix, we have 
\begin{align*}
\prod_i(y_{2i-1}y_{2i})^{\om_i^\vee} &= \prod_{i,j} (x_{2j-1}^{-1}x_{2j})^{C_{ij}\om_i^\vee}\\
&= \prod_{j}(x_{2j-1}^{-1}x_{2j})^{\al_j^\vee}.
\end{align*}
But on the lowest weight space this acts by the scalar
\begin{align*}
\prod_{j}(x_{2j-1}^{-1}x_{2j})^{-\om_{\nu_n(i)}(\al_j^\vee)} &= x_{2\nu_n(i)-1}x_{2\nu_n(i)}^{-1},
\end{align*}
hence
\[
\wht(P_{\min}) = x^{\coind {M_{i}^\la}}
\]
by \cref{prop:injectiveresolution}.

Recall from \cref{prop:Mbasis} that the vertices of $\Ga^\la_{i}$ are the elements of the basis 
\[
B_i^\la =\{ t_\ell : t \in Q_0, 0 \leq \ell \leq i-1, 2(i-\ell)-1 \leq t \leq 2(n - \ell)\}.
\]
Given a successor-closed subquiver $\Ga' \subset \Ga_i^\la$ and $0 \leq \ell \leq i-1$, let 
\[
m_\ell(\Ga') = \min\{t: t_\ell \in \Ga'_0\}
\]
if the right-hand side is nonempty, and $m_\ell(\Ga') = 2(n-\ell)+1$ otherwise.  
For $2(i-\ell)-1 \leq t, t' \leq 2(n - \ell)$, there is an arrow $t_\ell \to t'_\ell$ in $\Ga^\la_{i}$ if and only if $t' = t+1$.  In particular, we have
\[
\Ga'_0 = \{t_\ell \in B_i^\la: t \geq m_\ell(\Ga')\},
\]
so $\Ga'$ is completely determined by the $i$-tuple $\{m_\ell(\Ga')\}_{\ell = 0}^{i-1}$.  On the other hand, for $0 \leq \ell, \ell' \leq i-1$, $\ell \neq \ell'$, there is an arrow $t_\ell \to t'_{\ell'}$ in $\Ga^\la_{i}$ if and only if $\ell' = \ell+1$, $t$ is even, and $t' = t-3$.  It follows that an $i$-tuple $\{m_\ell\}_{\ell = 0}^{i-1}$ is of the form $\{m_\ell(\Ga')\}_{\ell = 0}^{i-1}$ for some successor-closed subquiver $\Ga'$ if and only if
\begin{equation}\label{eq:nonintersecting}
\begin{gathered}
2(i-\ell)-1 \leq m_\ell \leq 2(n - \ell)+1 \text{ for }0 \leq \ell \leq i-1,\\
m_{\ell+1} \leq m_\ell - 3 \text{ for $0 \leq \ell \leq i-2$ and $m_\ell$ even}\\
m_{\ell+1} \leq m_\ell - 2 \text{ for $0 \leq \ell \leq i-2$ and $m_\ell$ odd}.
\end{gathered}
\end{equation}

The term contributed to $CC(M_i^\la)$ by the associated submodule $N_{\Ga'}$  is
\[
y^{\dim N_{\Ga'}} = \prod_{\ell=0}^{i-1} \left(\prod_{j=m_{\ell(\Ga')}}^{2(n-\ell)}y_j\right).
\]

Now let $P=\{p_\ell\}_{\ell = 0}^{i-1}$ be a nonintersecting $i$-tuple of closed directed paths in $\ol{\CN}_{\mb{i}}$, indexed so that for $i < j$, $p_i$ lies below $p_j$ (equivalently, $\wht(p_j)/\wht(p_i)\in \BC[y_1,\dotsc,y_{2n}]$).  For $0 \leq \ell \leq i-1$, define $m_\ell = m_\ell(P)$ by the condition
\[
\wht(p_\ell) = y_{m_\ell}y_{m_\ell+1} \cdots y_{2n}
\]
if $\wht(p_\ell) \neq 1$, and $m_\ell(\Ga') = 2(n-\ell)+1$ if $\wht(p_\ell) = 1$.  It is straightforward to see that this assignment defines a bijection between the set of nonintersecting $i$-tuples of closed directed paths in $\ol{\CN}_{\mb{i}}$ and the set of $i$-tuples $\{m_\ell\}_{\ell = 0}^{i-1}$ satisfying \labelcref{eq:nonintersecting}, hence the set of successor-closed subquivers of $\Ga_i^\la$.  We have $m_\ell(P_{\min}) = 2(n-\ell)+1$, so that
\begin{align*}
\wht(P)/\wht(P_{\min}) &= \prod_{\ell=0}^{i-1} \left(\prod_{j=m_{\ell(\Ga')}}^{2(n-\ell)}y_j\right).
\end{align*}
But then
\begin{align*}
H_i & = \wht(P_{\min})\sum_{P = \{p_\ell\}}\wht(P)/\wht(P_{\min})\\
& = x^{\coind {M_{i}^\la}}\sum_{\Ga' \subset \Ga_i^\la}y^{\dim N_{\Ga'}} \\
& = CC(M_i^\la),
\end{align*}
completing the proof.
\end{proof}

\begin{example}\label{ex:H3}
On the left is the coefficient quiver $\Ga^\la_3$ of the 18-dimensional representation $M_3^\la$ of $J(Q_5,W_5)$ for $\la = (1:0)$.  On the right is the directed graph $\CN_{\mb{i}}$ associated with $SL_6^{c,c}$.  There are 61 successor-closed subquiver of $\Ga^\la_3$, hence 61 submodules of $M_3^\la$.  These correspond to 61 nonintersecting triples of closed directed paths in $\ol{\CN}_{\mb{i}}$.  The circled subquiver on the left corresponds to the highlighted triple on the right, and to a 6-dimensional submodule of $M_3^\la$.

\[
\begin{tikzpicture}[thick,>=stealth']
\newcommand*{\ky}{.9}
\newcommand*{\dy}{.5}
\newcommand*{\dx}{1.5}
\node [matrix] (socle) at (0,0)
{
\node (6a) at (0,0) {$5_0$};
\node (5a) at (0,-\ky) {$6_0$};
\node (6b) at (0,-2*\ky-2*\dy) {$5_1$};
\node (5b) at (0,-3*\ky-2*\dy) {$6_1$};
\node (6c) at (0,-4*\ky-4*\dy) {$5_2$};
\node (5c) at (0,-5*\ky-4*\dy) {$6_2$};
\node (8a) at (\dx,-\ky-\dy){$7_0$};
\node (7a) at (\dx,-2*\ky-\dy){$8_0$};
\node (8b) at (\dx,-3*\ky-3*\dy){$7_1$};
\node (7b) at (\dx,-4*\ky-3*\dy){$8_1$};
\node (10a) at (2*\dx,-2*\ky-2*\dy){$9_0$};
\node (9a) at (2*\dx,-3*\ky-2*\dy){$10_0$};
\node (4a) at (-\dx,-\ky-\dy){$3_1$};
\node (3a) at (-\dx,-2*\ky-\dy){$4_1$};
\node (4b) at (-\dx,-3*\ky-3*\dy){$3_2$};
\node (3b) at (-\dx,-4*\ky-3*\dy){$4_2$};
\node (2a) at (-2*\dx,-2*\ky-2*\dy){$1_2$};
\node (1a) at (-2*\dx,-3*\ky-2*\dy){$2_2$};

\newcommand{\buf}{.55};
\draw [blue] ($(5c)+(\buf,0)$) 
to[out=90,in=-90] ($(7b)+(\buf,0)$)
to[out=90,in=-90] ($(9a)+(\buf,0)$)
to[out=90,in=0] ($(9a)+(0,\buf)$)
to[out=180,in=20] ($(9a)+(-.8,.2)$)
to[out=20+180,in=90] ($(8b)+(-\buf,0)$)
to[out=-90,in=0] ($(6c)+(0,\buf)$)
to[out=180,in=-90] ($(4b)+(\buf,0)$)
to[out=90,in=0] ($(4b)+(0,\buf)$)
to[out=180,in=90] ($(4b)+(-\buf,0)$)
to[out=-90,in=90] ($(3b)+(-\buf,0)$)
to[out=-90,in=90] ($(5c)+(-\buf,0)$)
to[out=-90,in=180] ($(5c)+(0,-\buf)$)
to[out=0,in=-90] ($(5c)+(\buf,0)$);

\draw [->] (2a) to (1a);
\draw [->] (4a) to (3a);
\draw [->] (4b) to (3b);
\draw [->] (6a) to (5a);
\draw [->] (6b) to (5b);
\draw [->] (6c) to (5c);
\draw [->] (8a) to (7a);
\draw [->] (8b) to (7b);
\draw [->] (10a) to (9a);
\draw [->] (5a) to (8a);
\draw [->] (5a) to (4a);
\draw [->] (5b) to (8b);
\draw [->] (5b) to (4b);
\draw [->] (7a) to (10a);
\draw [->] (7a) to (6b);
\draw [->] (3a) to (2a);
\draw [->] (3a) to (6b);
\draw [->] (1a) to (4b);
\draw [->] (9a) to (8b);
\draw [->] (3b) to (6c);
\draw [->] (7b) to (6c);\\
};
\newcommand*{\Xh}{0.45}
\newcommand*{\ra}{0} 
\newcommand*{\rb}{1} 
\newcommand*{\rc}{2} 
\newcommand*{\length}{4.45} 
\newcommand*{\vla}{.5} 
\newcommand*{\vlb}{1} 
\newcommand*{\vlc}{1.5} 
\newcommand*{\vld}{2} 
\newcommand*{\varrowpos}{.5} 
\node [matrix,cells={scale=1.2}] (network) at (8,0) {

\foreach \x in {0,1,2,3,4} {
\draw (.5+\x*.5,5-\x) -- (.5+\x*.5,5-\x-1);
\draw [->] (.5+\x*.5,5-\x-1) -- (.5+\x*.5,5-\x-.46);
\draw (.5+\x*.5+1,5-\x) -- (.5+\x*.5+1,5-\x-1);
\draw [->] (.5+\x*.5+1,5-\x) -- (.5+\x*.5+1,5-\x-.54);
};

\foreach \y in {0,1,2,3,4,5} \draw [<-<] (0,\y) -- (\length,\y);

\newcommand*{\thck}{1.2pt};
\draw [blue,line width=\thck,>->] (\length,4) -- (0,4);
\draw [blue,line width=\thck,>->] (\length,2) -- (0,2);
\draw [blue,line width=\thck,>->] (\length,1) -- (3.5,1) -- (3.5,0) -- (2.5,0) -- (2.5,1) -- (0,1);
\draw [blue,line width=\thck,->] (3.5,1) -- (3.5,.46);
\draw [blue,line width=\thck,->] (2.5,0) -- (2.5,.54);

\foreach \x/\y/\i in {0/4/1,1/4/2,.5/3/3,1.5/3/4,1/2/5,2/2/6,1.5/1/7,2.5/1/8,2/0/9,3/0/10}
\node at (1+\x,\Xh+\y) {$y_{\i}$};\\
};

\end{tikzpicture}
\]
\end{example}

\subsection{Cluster Characters and Framed Quiver Moduli}

There is a general correspondence between quiver Grassmannians and moduli spaces of framed quiver representations \cite{Reineke}, and in this section we recall how to rewrite the cluster characters $CC(M_i^\la)$ in terms of framed quiver moduli.  While the perspective of quiver Grassmannians is more natural from the point of view of cluster algebras \cite{Caldero2004,Caldero2006,Palu2008}, the perspective of framed quiver moduli is more natural from the point of view of framed BPS indices in $\CN=2$ field theory \cite{Gaiotto2010,Chuang2013,Cordova2013,Cirafici2013}.  Thus our aim here is to clarify how to equate the expressions $CC(M_k^\la)$ with the types of expressions considered in the literature on line operators in $\CN=2$ theories.  

A framing of a quiver $Q$ is a new quiver $\ol{Q}$ containing $Q$ as a full subquiver along with one additional vertex called the framing vertex.  We generally assume a fixed labeling of $Q_0$ by $\{1,\dotsc,n\}$, and extend this to label the framing vertex by $0$.  If $Q$ comes with a potential $W$, we will denote by $\ol{W}$ a choice of its extension to $\ol{Q}$; that is, $\ol{W}$ is a sum of $W$ and some collection of cycles which include arrows whose source is the framing vertex.

\begin{defn} A framed representation of $J(Q,W)$ is a representation $M$ of $J(\ol{Q},\ol{W})$ for some framed quiver with potential $(\ol{Q},\ol{W})$, and such that $\dim M_0 = 1$.  We refer to the subspace $\oplus_{i=1}^n M_i$ as the unframed part of $M$.  
\end{defn}

Recall that a stability condition on a finite-length abelian category $\CC$ is a homomorphism $Z: K_0(\CC) \to \BC$ such that $Z([M])$ lies in the upper-half plane for any object $M$ of $\CC$.  An object $M$ is stable if
\[
\arg Z([L]) < \arg Z([M]) 
\]
for all proper subobjects $L$ of $M$.  Given a stability condition on $J(\ol{Q},\ol{W})\dmod$ and a dimension vector $e \in \BZ^n$, we write $\CM^{s}_{e}(\ol{Q},\ol{W})$ for the moduli space of stable framed representations whose unframed part has dimension $e$.

We say a stability condition $Z$ on $J(\ol{Q},\ol{W})\dmod$ is cocyclic if
\[
\arg Z([S_0]) < \arg Z([S_i])
\]
for all $1 \leq i \leq n$.  We say a framed representation $M$ of $J(Q,W)$ is cocyclic if every nontrivial submodule contains $M_0$.  It is straightforward to show that a framed representation of $J(Q,W)$ is stable with respect to a cocyclic stability condition if and only if it is cocyclic.

For the quiver $Q_n$, we write $\ol{Q}_n^k$ for the framed quiver with no oriented 2-cycles such that
\[
(\ol{Q}_n^k)_{0i} = \begin{cases} 1 & i = 2k-1\\ -1 &i = 2k\\ 0 &\text{otherwise}.\end{cases}
\]
For $\la = (\la_1 : \la_2) \in \BP^1$, let $\ol{W}_n^\la$ denote the potential
\[
\ol{W}_n^\la = W_n + \la_1 a_kcd + \la_2 b_kcd
\]
on $\ol{Q}_n^k$, where $c$ and $d$ denote the arrows from $0$ to $2k-1$ and from $2k$ to $0$, respectively.  

\begin{prop}\label{thm:framedpotential}
Let $Z$ be a cocyclic stability condition on $J(\ol{Q}_n^k,\ol{W}_n^\la)\dmod$, and $M$ a cocylic framed representation.  The unframed part of $M$ is isomorphic to a unique subrepresentation of $M_{\nu_n(k)}^\la$. In particular,  $\CM^{s}_{e}(\ol{Q}_n^k,\ol{W}_n^\la) \cong \Gr_e M_{\nu_n(k)}^\la$ for all dimension vectors $e \in \BN^{2n}$, so
\[
CC(M_{\nu_n(k)}^\la) = x_{2k-1}x_{2k}^{-1}\sum_{e \in \BZ^n} \chi(\CM^{s}_{e}(\ol{Q}_n^k,\ol{W}_n^\la)) y^e.
\]
\end{prop}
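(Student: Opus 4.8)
The plan is to deduce the displayed identity from the special case of the standard dictionary between framed quiver moduli and quiver Grassmannians \cite{Reineke} in which the relevant unframed module is $M_{\nu_n(k)}^\la$: concretely I would construct a bijection $\CM^{s}_{e}(\ol{Q}_n^k,\ol{W}_n^\la)\isom\Gr_e M_{\nu_n(k)}^\la$ and then feed it into \cref{prop:injectiveresolution} and the definition of the cluster character. A convenient feature of this case is that, by \cref{sec:qreps}, every quiver Grassmannian of $M_{\nu_n(k)}^\la$ is a single reduced point when nonempty, so no care is needed for the scheme structure on $\CM^{s}_{e}$: the bijection on closed points is automatically an isomorphism of varieties (one may also just cite \cite{Reineke}). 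First I would invoke the fact, recorded just before the statement, that stability for a cocyclic stability condition coincides with cocyclicity, reducing the problem to matching cocyclic framed representations of $J(\ol{Q}_n^k,\ol{W}_n^\la)$ with subrepresentations of $M_{\nu_n(k)}^\la$.

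The conceptual point, which I would isolate next, is that \emph{cocyclicity forces the framing arrow $c$ to act by zero}: if $M\neq S_0$ then $\operatorname{soc}M = S_0$ (every nonzero submodule meets the socle and contains $M_0$, so every simple submodule is $S_0$), hence $M_0$ is a submodule of $M$; since the only arrow out of the framing vertex is $c\colon 0\to 2k-1$, it must kill $M_0$, and $\dim M_0 = 1$ gives $c = 0$. With $c = 0$ the two Jacobian relations $\del_{a_k}\ol{W}_n^\la = \del_{a_k}W_n + \la_1 cd$ and $\del_{b_k}\ol{W}_n^\la = \del_{b_k}W_n + \la_2 cd$ collapse to the untwisted relations $\del_{a_k}W_n = \del_{b_k}W_n = 0$, so the unframed part $V := \bigoplus_{i=1}^n M_i$ is an honest $J(Q_n,W_n)$-module; and the remaining new relations $\del_c\ol{W}_n^\la$, $\del_d\ol{W}_n^\la$ say, given $c = 0$, exactly that the functional $\phi := d\in V_{2k}^*$ satisfies $\phi\circ(\la_1 a_k + \la_2 b_k) = 0$. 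Thus a cocyclic framed representation is the same datum as a $J(Q_n,W_n)$-module $V$ equipped with such a $\phi$ (together with $M_0 = \BC$, $c = 0$, $d = \phi$).

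Then I would match this datum with a subrepresentation of $M_{\nu_n(k)}^\la$. By \cref{prop:injectiveresolution} there is an exact sequence $0\to M_{\nu_n(k)}^\la\to I_{2k}\xrightarrow{\la}I_{2k-1}\to 0$, and under the canonical identifications $\Hom_{J(Q_n,W_n)}(X,I_{2k})\cong X_{2k}^*$ and $\Hom_{J(Q_n,W_n)}(X,I_{2k-1})\cong X_{2k-1}^*$ the map induced by $I_{2k}\xrightarrow{\la}I_{2k-1}$ is the transpose of the map $X_{2k-1}\to X_{2k}$ given by $\la_1 a_k + \la_2 b_k$; hence $\Hom(X,M_{\nu_n(k)}^\la)$ is identified with $\{\psi\in X_{2k}^*:\psi\circ(\la_1 a_k + \la_2 b_k)|_X = 0\}$, so the functional $\phi$ above is literally a morphism $f_\phi\colon V\to M_{\nu_n(k)}^\la$. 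I would then check that $f_\phi$ is injective iff $M$ is cocyclic, since $\ker f_\phi$ is the largest submodule of $V$ annihilated by $\phi$, i.e.\ the largest submodule of $M$ with vanishing component at $0$. Conversely a subrepresentation $U\subseteq M_{\nu_n(k)}^\la$ of dimension $e$ gives back a cocyclic framed representation with unframed part $U$, by restricting along $U\hookrightarrow M_{\nu_n(k)}^\la\hookrightarrow I_{2k}$ the functional cogenerating the copresentation (it is nonzero on $\operatorname{soc}M_{\nu_n(k)}^\la = S_{2k}$, which forces cocyclicity). As every quiver Grassmannian of $M_{\nu_n(k)}^\la$ is a point, $U$ is the unique subrepresentation of its dimension vector, which yields both the uniqueness clause and the bijection $\CM^{s}_{e}(\ol{Q}_n^k,\ol{W}_n^\la)\isom\Gr_e M_{\nu_n(k)}^\la$.

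For the final formula, \cref{prop:injectiveresolution} together with $\nu_n\circ\nu_n = \operatorname{id}$ gives $x^{\coind M_{\nu_n(k)}^\la} = x_{2k-1}x_{2k}^{-1}$, and since all quiver Grassmannians of $M_{\nu_n(k)}^\la$ are points one has $CC(M_{\nu_n(k)}^\la) = x_{2k-1}x_{2k}^{-1}\sum_{e}\chi(\Gr_e M_{\nu_n(k)}^\la)\,y^e$; substituting the bijection just constructed finishes the proof. The part I expect to demand the most care is identifying the map induced on $\Hom$-spaces by the copresentation of $M_{\nu_n(k)}^\la$ with the action of $\la_1 a_k + \la_2 b_k$ — i.e.\ tracking the duality $D\si_n$ of \cref{prop:injectiveresolution} through the standard adjunctions — together with the cocyclic-versus-injective check. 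The observation that $c$ must vanish, though short, is the point that makes the identification with an honest $J(Q_n,W_n)$-module possible, and it explains why $c$ appears in $\ol{Q}_n^k$ at all: it is the arrow the potential uses to carve $M_{\nu_n(k)}^\la$ out of $I_{2k}$ as a kernel.
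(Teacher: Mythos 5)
Your proposal is correct and follows essentially the same route as the paper: cocyclicity forces the framing arrow $c$ to vanish, the relation $\del_c\ol{W}_n^\la$ then places the unframed part inside $\Ker(I_{2k}\xrightarrow{\la}I_{2k-1})=M_{\nu_n(k)}^\la$ via \cref{prop:injectiveresolution}, and the fact that all quiver Grassmannians of $M_{\nu_n(k)}^\la$ are points makes the moduli identification immediate. You merely spell out in more detail (via $\Hom(X,I_j)\cong X_j^*$ and the injectivity-iff-cocyclicity check) steps the paper's proof leaves implicit.
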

\begin{proof}
Since $M$ is cocyclic, it is a submodule of $I_0$ with $\dim M_0 =1$ hence $c$ acts by $0$.  The unframed part $M^u$ of $M$ is then a $J(Q_n,W_n)$-module which injects into $I_{2k}$.  But the relation $\del_{c} W^\la = (\la_1 a + \la_2 b)d = 0$ forces the image of $M^u$ to lie in the kernel of the map $I_{2k} \xrightarrow{\la} I_{2k-1}$, which by \cref{prop:injectiveresolution} is $M_{\nu_n(k)}^\la$.  Since each $\Gr_e M_{\nu_n(k)}^\la$ is either a point or empty, we trivially obtain the stated isomorphism.
\end{proof}

\section{Relativistic Toda Systems and Spectral Networks}\label{sec:reltodaandnets}

In this section we identify the phase space $SL_{n+1}^{c,c}/\Ad H$ with a wild character variety of $\BP^1$ with singularities at $0$ and $\infty$, and the open relativistic Toda Hamiltonians with traces of holonomies around the nontrivial cycle of $\BC^*$.  This character variety is related by the wild nonabelian Hodge correspondence to the periodic nonrelativistic Toda system, viewed as a meromorphic Hitchin system on $\BP^1$.  The spectral networks of the periodic Toda system endow it with a structure essentially that of a cluster variety of type $Q_n$; more precisely, it is birational to a space intermediate between the corresponding $\CA$- and $\CX$-varieties.  This implicitly identifies $SL_{n+1}^{c,c}/\Ad H$ with the wild character variety up to a finite cover, and showing this takes traces of holonomies to Hamiltonians consists in carefully studying the trajectories of certain differential equations defined by the periodic Toda spectral curves.  We show that the description of the relativistic Toda Hamiltonians as weighted sums of paths in a directed graph reappears exactly in the path-lifting description of the parallel transport around $\BC^*$, the directed graph  $\ol{\CN}_{\mb{i}}$ used in \cref{sec:paths} to describe the cluster coordinates on $SL_{n+1}^{c,c}/\Ad H$ reemerging here as the 1-skeleton of the periodic Toda spectral curve.

\subsection{Path-lifting in Nonsimple Spectral Networks}\label{sec:nonsimple}

For a generic spectral curve of even a simple Hitchin system, it may be difficult to explicitly describe the trajectories of differential equations that comprise the associated spectral network.  Our purposes, however, give us the flexibility to restrict out attention to special Toda spectral curves with additional global symmetries.  This simplifies the situation enough to perform an essentially exact analysis, but these nongeneric curves will no longer have only simple ramification.  Thus in this section we collect some technical preliminaries concerning the path-lifting rules defined by nonsimple spectral networks.

Such a network can be ``resolved'' into a simple spectral network in several ways, each of which defines an equivalent path-lifting rule in a suitable sense.  In particular, suppose we have a path $\wp$ that passes through a neighborhood of a nonsimple branch point, for example in the case of $\stdknet$ a path that is a slight perturbation of a line through the origin.  The number of walls crossed by $\wp$ grows quadratically with $N$.  However, we can construct a simple network $\stdknet'$ equivalent to $\stdknet$ such that $\wp$ crosses only $N-1$ walls of $\stdknet'$.  In particular, $\stdknet'$ provides a much less redundant description of the parallel transport along $\wp$.  Analytically, if a nonsimple network is defined by a spectral curve $\Si$ in the cotangent bundle of a Riemann surface $C$, resolutions of this sort may be constructed from a generic perturbation of $\Si$.  Below we provide an explicit combinatorial description of all possible resolutions, which will prove useful for practical computations later.

Recall the network $\stdknet$ from \cref{sec:snetapp} and \cref{fig:stdknet}.  The points along any fixed wall $p_c$ of $\stdknet$ have the same argument, which we denote by $\arg p_c$.

\begin{defn}
Let $p_c$ be a wall of $\stdknet$ for some $N>2$ and let $\th = \arg p_c$.  The splitting of $\stdknet$ of phase $\th$ is the subset $S_\th \coloneqq \{p_c: \arg p_c \in [\th,\th+\pi)\}$ of walls of $\stdknet$.  A splitting of $\stdknet$ is a subset of its walls of this form for some phase
\end{defn}

\begin{defn}\label{def:splitting}
Let $\fb$ be a branch point of $\Si \to C$ whose monodromy is an $N$-cycle for $N>2$, and $\CW$ a spectral network subordinate to $\Si$.  By assumption, there is a bijection between the walls of $\CW$ born at $\fb$ and the walls of $\stdknet$.  A splitting of $\CW$ at $\fb$ is a subset $S = \{p_c\}$ of these walls which is identified with a splitting of $\stdknet$ under this bijection.
\end{defn}

We will identify the set of ordered pairs of distinct indices $0,1,\dotsc,N-1$ with roots of $SL_N$ in the natural way.  In particular, if the sheets of a branched cover $\Si$ are labeled locally by $0,1,\dotsc,N-1$, the walls of a spectral network subordinate to $\Si$ are then labeled by roots of $SL_N$.

\begin{defn}
We associate a choice $\Phi$ of positive roots of $SL_N$ to the splitting $S_\th$ of $\stdknet$ as follows.  First choose a branch cut along a ray $\{z: \arg z = \vphi\}$ for some $\vphi \nin [\th,\th+\pi)$ and a labeling of the sheets of $\Si_N$ by $\{0,1,\dotsc,N-1\}$ such that the counterclockwise monodromy around $0$ is the $N$-cycle
\[
(01\cdots(N-1)).
\]
Then $\Phi$ is the set of ordered pairs $ij$ for which there is a wall $p_c$ with outwardly oriented label $ij$ and with $\arg p_c \in [\th+\frac{\pi}{N+1},\th+\pi)$.  Note that the definition of $\Phi$ depends on the choice of labeling.  Let $\De_a$ and $\De_b$ be the sets of positive roots for which the corresponding wall has argument $\th+\frac{N\pi}{N+1}$ or $\th+\frac{\pi}{N+1}$, respectively. In other words, $\De_a$ and $\De_b$ are the labels of the walls of $S_\th$ with the largest and the second smallest argument, respectively.  The subset $\De \subset \Phi$ of simple positive roots is exactly $\De_a \cup \De_b$.  
\end{defn}

\begin{defn}\label{def:splittingdata}
To a splitting $S$ of a general spectral network $\CW$ at some branch point $\fb$, and we associate choices $\Phi$ and $\De = \De_a \cup \De_b$ of positive and simple positive roots via its the local equivalence with $\stdknet$.  This again depends on a branch cut and suitable labeling of the sheets near $\fb$.  We let $\si$ be the unique permutation of $\{0,1,\dotsc,N-1\}$ such that 
\[\De = \{\si_0\si_1,\si_1\si_2,\dotsc,\si_{N-2}\si_{N-1}\}.
\]
\end{defn}

To the splitting $S_\th$ of $\stdknet$ we associate a new spectral network $\stdknet'$, subordinate to a branched cover $\Si'$ of $\BC$ with only simple ramification defined as follows.  There is a simple branch point $\fb_{ij}$ inside the unit disk for each $ij \in \De$.  If $ij \in \De_a$ (resp. $ij \in \De_b$) then $\arg \fb_{ij} = \th+\frac{\pi}{4}$ (resp. $\arg \fb_{ij} = \th+\frac{5\pi}{4}$), and if $ij$ and $i'j'$ are both in $\De_a$ or both in $\De_b$, then $|\fb_{ij}| < |\fb_{i'j'}|$ if and only if $(\si^{-1})_{i'} < (\si^{-1})_{i}$.  We choose $N-1$ branch cuts that do not intersect in the unit disk but coalesce onto the fixed branch cut of $\stdknet$ outside the unit disk.  The walls of $\stdknet$ divide the unit circle into $2(N+1)$ disjoint arcs, and these $N-1$ branch cuts should each cross the unit circle on the same arc.  Then $\Si'$ is the branched cover whose sheets are labeled by $\{0,1,\dotsc,N-1\}$ over the complement of the branch locus so that the monodromy around $\fb_{ij}$ interchanges $i$ and $j$. 

\begin{figure}
\begin{tikzpicture}[decoration={snake,amplitude=1,segment length=7}]
\newcommand*{\rad}{5};
\newcommand*{\off}{2};
\newcommand*{\din}{1};
\newcommand*{\sk}{3};
\foreach \n in {0,1,...,11} {\draw[-stealth'] (15+30*\n:\rad) to (15+30*\n:\rad+.4);};

\node [cross out,draw=orange,line width=.6mm,rounded corners] (b0) at (225:.66) {};
\node [cross out,draw=orange,line width=.6mm,rounded corners] (b1) at (225+180:.66) {};
\node [cross out,draw=orange,line width=.6mm,rounded corners] (b2) at (225:2) {};
\node [cross out,draw=orange,line width=.6mm,rounded corners] (b3) at (225+180:2) {};

\node (bcm) at (92:\rad-\din+.3) {};
\draw [decorate,color=orange] (b0) to[out=120,in=70+180] (120:1.8) to[out=70,in=-90] (bcm);
\draw [decorate,color=orange] (b1) to[out=90,in=-90] (bcm);
\draw [decorate,color=orange] (b2) to[out=120,in=50+180] (120:2.5) to[out=50,in=-90]  (bcm);
\draw [decorate,color=orange,shorten >=0] (b3) to[out=90,in=-90] (bcm);
\draw [decorate,color=orange,shorten <=0] ($(bcm)-(0,.13)$) to[out=90,in=-90] ($(bcm)+(0,1.2)$);

\node [color=orange] at ($(bcm)+(0,1.4)$) {(01234)};

\foreach \n in {0} {

\draw [name path=p12] (b0)
to[out=0+180*\n,in=-15-\sk-4+180+180*\n] (-15+\off+180*\n:\rad-\din) 
to[out=-15-\sk-4+180*\n,in=-15+180+180*\n] (-15+180*\n:\rad);
\node at (-15+\off+4+180*\n:\rad-\din) {12};

\draw [name path=p03] (b2) 
to[out=0+180*\n,in=-15+180+180*\n] (-15+180*\n:\rad);
\node at (-15-\off-3+180*\n:\rad-\din) {03};

\draw (b0)
to[out=180+180*\n,in=195+\sk+4+180+180*\n] (195-\off+180*\n:\rad-\din) 
to[out=195+\sk+4+180*\n,in=195+180+180*\n] (195+180*\n:\rad);
\node at (195-\off-4+180*\n:\rad-\din) {21};

\draw (b2)
to[out=180+180*\n,in=195+180+180*\n] (195+180*\n:\rad);
\node at (195+\off+4+180*\n:\rad-\din) {30};

\draw [name path=p20] (b1) to[out=-90,in=45] (-110:1.3)
to[out=225+\sk+180*\n,in=225+180+180*\n] (225+180*\n:\rad);
\node at (225-3+180*\n:\rad-\din) {20};

\draw [name path=p34] (b3) to[out=-90,in=45] (-70:1.3) 
to[out=-135+180*\n,in=225-\sk-25+180+180*\n] (225+\off+3+180*\n:\rad-\din) 
to[out=225-\sk-25+180*\n,in=225+180+180*\n] (225+180*\n:\rad);
\node at (225+\off+8+180*\n:\rad-\din) {34};

\draw [name intersections={of=p03 and p20,by=int1}];
\draw [name intersections={of=p03 and p34,by=int2}];
\draw [name intersections={of=p12 and p20,by=s10}, name path=p10] (s10) 
to[out=-45+180*\n,in=45+180*\n] ($(int1)!.5!(int2)$) 
to[out=45+180+180*\n,in=40+180*\n] (-105:1.7)
to[out=40+180+180*\n,in=255+180+\sk+5+180*\n] (255-\off-1+180*\n:\rad-\din) 
to[out=255+\sk+5+180*\n,in=75+180*\n] (255+180*\n:\rad);
\node at (255-\off-5+180*\n:\rad-\din) {10};

\draw [name intersections={of=p10 and p03,by=s13}, name path=p13] (s13)
to[out=320+180*\n,in=315+\sk+15+180+180*\n]  (315-4+180*\n:\rad-\din) 
to[out=315+\sk+15+180*\n,in=315+180+180*\n] (315+180*\n:\rad);
\node at (315-8+180*\n:\rad-\din) {13};

\draw [name intersections={of=p13 and p34,by=s14}, name path=p14] (s14)
to[out=285+180*\n,in=285+180+180*\n] (285+180*\n:\rad);
\node at (285+3+180*\n:\rad-\din) {14};

\draw [name intersections={of=p03 and p20,by=s23}, name path=p23] (s23)
to[out=285+180*\n,in=285+\sk+20+180+180*\n]  (285-\off-2+180*\n:\rad-\din) 
to[out=285+\sk+20+180*\n,in=285+180+180*\n] (285+180*\n:\rad);
\node at (285-\off-7+180*\n:\rad-\din) {23};

\draw [name intersections={of=p23 and p34,by=s24}, name path=p24] (s24) 
to[out=-100+180*\n,in=255+180-\sk-2+180*\n] (255+\off+180*\n:\rad-\din) 
to[out=255-\sk-2+180*\n,in=75+180*\n] (255+180*\n:\rad);
\node at (255+\off+4+180*\n:\rad-\din) {24};

\draw [name intersections={of=p03 and p34,by=s04}, name path=p04] (s04) 
to[out=325+180*\n,in=315+180+180*\n] (315+180*\n:\rad);
\node at (315+3+180*\n:\rad-\din) {04};
};

\foreach \n in {1} {

\draw [name path=pu31] (b1)
to[out=0+180*\n,in=-15-\sk-4+180+180*\n] (-15+\off+180*\n:\rad-\din) 
to[out=-15-\sk-4+180*\n,in=-15+180+180*\n] (-15+180*\n:\rad);
\node at (-15+\off+4+180*\n:\rad-\din) {31};

\draw [name path=pu40] (b3) 
to[out=0+180*\n,in=-15+180+180*\n] (-15+180*\n:\rad);
\node at (-15-\off-3+180*\n:\rad-\din) {40};

\draw (b1)
to[out=180+180*\n,in=195+\sk+2+180+180*\n] (195-\off+180*\n:\rad-\din) 
to[out=195+\sk+2+180*\n,in=195+180+180*\n] (195+180*\n:\rad);
\node at (195-\off-4+180*\n:\rad-\din) {02};

\draw (b3)
to[out=180+180*\n,in=195+180+180*\n] (195+180*\n:\rad);
\node at (195+\off+4+180*\n:\rad-\din) {43};

\draw [name path=pu12] (b0) 
to[out=-90+180*\n,in=45+180*\n] (-110+180*\n:1.3)
to[out=225+\sk+180*\n,in=225+180+180*\n] (225+180*\n:\rad);
\node at (225-3+180*\n:\rad-\din) {01};

\draw [name path=pu03] (b2) 
to[out=-90+180*\n,in=45+180*\n] (-70+180*\n:1.3) 
to[out=-135+180*\n,in=225-\sk-25+180+180*\n] (225+\off+3+180*\n:\rad-\din) 
to[out=225-\sk-25+180*\n,in=225+180+180*\n] (225+180*\n:\rad);
\node at (225+\off+8+180*\n:\rad-\din) {42};

\draw [name intersections={of=pu31 and pu03,by=intu1}];
\draw [name intersections={of=pu40 and pu03,by=intu2}];
\draw [name intersections={of=pu31 and pu12,by=su32}, name path=pu32] (su32) 
to[out=135,in=-45] (-45+180*\n:\rad);
\node at (-45-\off-3+180*\n:\rad-\din) {32};

\draw [name intersections={of=pu32 and pu03,by=su02}, name path=pu02] (su02)
to[out=-95+180*\n,in=-105-\sk-24+180+180*\n]  (-105+7+180*\n:\rad-\din) 
to[out=-105-\sk-24+180*\n,in=-105+180+180*\n] (-105+180*\n:\rad);
\node at (-105+12+180*\n:\rad-\din) {41};

\draw [name intersections={of=pu31 and pu03,by=su01}, name path=pu01] (su01)
to[out=135,in=105+180] (105:\rad);
\node at (105+7:\rad-\din) {01};

\draw [name intersections={of=pu40 and pu01,by=su41}, name path=pu41] (su41)
to[out=145,in=-45-\sk-11+180+180*\n]  (-45+\off+2+180*\n:\rad-\din) 
to[out=-45-\sk-11+180*\n,in=-45+180+180*\n] (-45+180*\n:\rad);
\node at (-45+\off+6+180*\n:\rad-\din) {41};

\draw [name intersections={of=pu02 and pu40,by=su42}, name path=pu42] (su42) 
to[out=125,in=-75+180-\sk+1+180*\n] (-75-\off-1+180*\n:\rad-\din) 
to[out=-75-\sk+1+180*\n,in=-75+180+180*\n] (-75+180*\n:\rad);
\node at (-75-\off-5+180*\n:\rad-\din) {42};

\draw [name intersections={of=pu40 and pu03,by=su43}, name path=pu43] (su43) 
to[out=80,in=-105] (-105+180*\n:\rad);
\node at (-105-2+180*\n:\rad-\din) {32};
};

\end{tikzpicture}
\caption{The simple network $\CW'_{5}$ associated with the splitting $S_{\frac{13\pi}{12}}$ of $\CW_5$  consisting of walls lying in the lower-half plane.   See \cref{fig:stdknet} for an illustration of $\CW_5$.  Here $\De_a = \{03,12\}$, $\De_b = \{20,34\}$, and $\si = (012)$.  If $\wp$ is a path that follows the real axis, the parallel transport associated with $F(\wp,\CW'_5)$ factors as a product of elements of 4 simple root subgroups corresponding to the 4 walls $\wp$ intersects.}
\label{fig:resolvednet}
\end{figure}

The network $\stdknet'$ is determined up to isotopy by the following conditions:
\begin{enumerate} 
\item The intersections of $\stdknet$ and $\stdknet'$ with the complement of the unit disk coincide.  The walls of $\stdknet'$ are in bijection with those of $\stdknet$, though we abuse our terminology slightly: we will refer to as an $ij$-wall the union of a sequence of $ij$-walls, each consecutive pair of which meets at a joint.  For example, in this sense the middle picture of \cref{fig:junction} consists of three walls: an $ij$-wall crossing a $jk$-wall and giving birth to an $ik$-wall.  In the interior of the unit disk, no two walls of $\stdknet'$ cross more than once.  Each wall $p_c$ of $\stdknet'$ meets the unit circle exactly once and we write $\arg p_c$ for the argument of this point.
\item If $ij \in \De_a$ (resp. $\De_b$), the three walls of $\stdknet'$ born at $\fb_{ij}$ have arguments $\th$, $\th+\frac{N}{N+1}\pi$, $\th+\frac{N+2}{N+1}\pi$ (resp. $\th+\frac{1}{N+1}\pi$, $\th+\pi$, $\th+\frac{2N+1}{N+1}\pi$).  No walls of the same argument intersect inside the unit disk.  The walls $p_c$ with $\th+\frac{N+2}{N+1}\pi \leq \arg p_c \leq \th$ (resp. $\th+\frac{1}{N+1}\pi \leq \arg p_c \leq \th+\pi$) do not intersect the semidisk $\{z: |z| < 1, \th+\frac{1}{N+1}\pi < \arg z < \th+\frac{N+2}{N+1}\pi\}$ (resp. $\{z: |z| < 1, \th+\frac{N+2}{N+1}\pi < \arg z < \th+\frac{1}{N+1}\pi\}$).
\item The walls $p_c$ with $\th + \frac{1}{N+1}\pi < \arg p_c < \th + \frac{N}{N+1}\pi$ are not born at branch points, but rather at joint where two other walls cross; we call these secondary walls and the walls born at branch points primary walls.  The labels of the secondary walls are in bijection with $\Phi \smallsetminus \De$.   For some $0 < R < 1$, all joints where a primary wall crosses another wall lie in the region $|z|<R$, and all joints where two secondary walls cross lie in the region $|z| > R$.  A secondary $ij$-wall $p_c$ is born at a joint where an $ik$-wall crosses a $kj$-wall for $k$ satisfying $(\si^{-1})_k = 1+(\si^{-1})_i$.  This $ij$-wall crosses a primary $i'j'$-wall $p'_c$ if and only if $(\si^{-1})_{i'}\geq (\si^{-1})_j$ and $\arg p_c' \in \{\th+\frac{1}{N+1}\pi,\th+\frac{N}{N+1}\pi\}$.  Following $p_c$ away from its birthplace, $p_c$ intersects these primary walls in order of increasing $(\si^{-1})_{i'}$.  
\item The walls $p_c$ with $\th + \frac{N+2}{N+1}\pi < \arg p_c < \th+2\pi$ are also born at joints rather than branch points.  Their configuration is determined in the same fashion as those in the previous step.
\end{enumerate}

We let the reader convince themselves there exists a unique-up-to-isotopy spectral network satisfying the above conditions.

To compare the path-lifting rules defined by $\stdknet$ and $\stdknet'$ we define a 1-parameter family of branched covers $\Si_t$ with $\Si_0 = \Si$ and $\Si_1 = \Si'$.  The branch points of $\Si_t$ are $\fb_{ij,t} \coloneqq t \fb_{ij}$.  The family $\Si_t$ lets us identify homotopy classes of paths in $\Si$ and $\Si'$ with fixed endpoints in the complement of the unit disk.  

\begin{prop}
If $\wp$ is an open path with endpoints in the complement of the unit disk,
\[
\bF(\ti \wp,\stdknet) = \bF(\ti \wp,\stdknet'),
\]
where we identify homotopy classes of open paths in $\Si$ and $\Si'$ via the family $\Si_t$.
\end{prop}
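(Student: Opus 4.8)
The plan is to deduce the equality from the general invariance properties of the path-lifting operator, using the family $\Si_t$ to interpolate between the two networks. Recall from \cite{Gaiotto} (see also \cref{sec:snetapp}) that $\bF(\ti\wp,\CW)$ is invariant under homotopy of $\wp$ relative to its endpoints and under ambient isotopy of $\CW$ relative to those endpoints, and that it is unchanged by the elementary local modifications relating simple spectral networks subordinate to isotopic simple branched covers: the creation or annihilation of a pair of walls at a branch point, and the exchange of a wall past a branch point or of two joints past one another. Each of these is a consequence of the consistency (associativity) of the formal parallel transport along $\wp$. Note also that the endpoints of $\wp$ lie in the complement of the unit disk, where $\Si_t$ is independent of $t$ and where, by condition~(1) in the construction of $\stdknet'$, the networks $\stdknet$ and $\stdknet'$ literally coincide; hence the abelian parallel transport and all wall-crossing factors collected by $\wp$ outside the unit disk agree for the two networks. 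If $\wp$ does not meet the unit disk there is nothing more to prove, so we may assume it does.

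Next I would introduce, for $t\in(0,1]$, the spectral network $\CW_t$ subordinate to $\Si_t$ at the fixed phase. For $t>0$ the branch points $\fb_{ij,t}=t\,\fb_{ij}$ are distinct and the ramification is simple, so each $\CW_t$ is a simple network, $\CW_1=\stdknet'$ up to isotopy, and the family $\Si_t$ transports the fixed path $\wp$ coherently so that $\bF(\ti\wp,\CW_t)$ is defined for all such $t$. Reading off the explicit data of $\stdknet'$ in conditions~(1)--(4), one checks that as $t$ decreases from $1$ the branch points slide radially toward the origin and $\CW_t$ varies only by ambient isotopy except at finitely many values of $t$, at each of which a single one of the elementary moves above occurs (a wall sweeping past a branch point, or two joints exchanging order). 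By the invariance properties just recalled, $\bF(\ti\wp,\CW_t)$ is therefore constant on $(0,1]$; in particular $\bF(\ti\wp,\stdknet')=\bF(\ti\wp,\CW_t)$ for all small $t>0$.

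It then remains to identify the limit as $t\to0^+$, when the $\fb_{ij,t}$ coalesce at the origin and $\CW_t$ degenerates to $\stdknet$. Fixing $R<1$ as in condition~(3), for all sufficiently small $t$ the branch points and the joints at which primary walls meet other walls lie in $\{|z|<R\}$, while the walls exiting $\{|z|\ge R\}$ --- in particular those crossed by $\wp$ --- are in label-preserving bijection with the walls of $\stdknet$ crossed by $\wp$. I would then show that the ordered product of wall-crossing and detour factors contributed by the portion of $\wp$ inside $\{|z|<R\}$ stabilizes, for small $t$, to exactly the factor prescribed by the path-lifting rule of the nonsimple network $\stdknet$ at $\fb$; this is precisely the sense in which that rule is obtained from a generic resolution in \cite{Gaiotto}, with conditions~(3)--(4) arranged so that the collapsing secondary-wall structure of $\CW_t$ reproduces the prescribed detour terms. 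Combining this with the previous paragraph yields $\bF(\ti\wp,\stdknet)=\bF(\ti\wp,\stdknet')$.

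The step I expect to be the real obstacle is this last one: verifying that the degeneration $t\to0^+$ is \emph{tame}, so that no wall-crossing or detour factor is lost or gained in the limit and the secondary-wall bookkeeping of conditions~(3)--(4) matches the intrinsic rule for the $N$-cycle branch point. Concretely this amounts to a Weyl-combinatorial identity --- the full ``dressed'' transport of $\wp$ through the nonsimple singularity, a product over roughly $N^2$ walls, must collapse to the product over the $N-1$ simple-root-subgroup walls of $\stdknet'$ --- and it is the one place where the explicit combinatorics of nonsimple networks, rather than the formal invariance of $\bF$, does the work; the rest is bookkeeping with isotopies and the standard elementary moves.
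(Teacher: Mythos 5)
There is a genuine gap, and you have in fact flagged it yourself: the entire content of the proposition is concentrated in your final step, the identification of the $t\to 0^+$ degeneration with the path-lifting rule of the nonsimple network $\stdknet$, and you leave that step as a conditional (``I would then show that\dots''). Moreover, the scaffolding you build around it is not available as a black box in this setting. The family $\Si_t$ is introduced in the paper only as a family of topological branched covers used to identify homotopy classes of open paths; there is no canonical spectral network $\CW_t$ ``at the fixed phase'' attached to each $\Si_t$, since $\Si'$ is defined combinatorially (by conditions (1)--(4), up to isotopy) rather than as a curve in $T^*\BC$ carrying a WKB foliation. Likewise, invariance of $\bF(\ti\wp,\CW)$ under ``elementary moves'' of the network (walls sweeping past branch points, joints exchanging order) is not established in \cite{Gaiotto} or in the paper; what is proved there is homotopy invariance in $\wp$ for a \emph{fixed} simple network. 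Indeed $\bF$ genuinely changes as a formal expression when the network changes, so each link in your chain $\bF(\ti\wp,\CW_t)=\mathrm{const}$ would itself require an argument of exactly the kind the proposition demands.

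The idea you are missing is to deform $\wp$ rather than the network. The paper's proof is two sentences: first, if $\wp$ stays in a small neighborhood of the unit circle, the claim is immediate from the construction of $\stdknet'$ --- by condition (1) the two networks have identical intersection data with the complement of the unit disk, the walls of $\stdknet'$ reaching the unit circle are in label-preserving bijection with those of $\stdknet$, and the solitons they carry are identified via $\Si_t$ with the light solitons around the origin. Second, the general case reduces to this one because $\bF(\ti\wp,\stdknet)$ is manifestly unchanged when one alters $\wp$ by dilating the interior of the unit disk: the walls of $\stdknet$ are radial, so such a dilation preserves the combinatorial intersection pattern of $\wp$ with $\stdknet$ (and for the simple network $\stdknet'$ one may invoke the homotopy invariance of \cite{Gaiotto}). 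This sidesteps entirely the ``Weyl-combinatorial identity'' you correctly identify as the obstacle in your route: no collapsing product over $N^2$ walls ever has to be evaluated, because the path can be moved out of the region where that product lives.
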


\begin{proof}
The statement is clear from the definition of $\stdknet'$ when $\wp$ stays within a small neighborhood of the unit circle.  But this is sufficient since $\bF(\ti \wp,\stdknet)$ is clearly invariant when we alter $\wp$ by dilating the interior of the unit disk.  
\end{proof}

\begin{corollary}\label{cor:simpleres}
Any nonsimple spectral network $\CW$ defines a path-lifting rule equivalent to that of a simple network $\CW'$.
\end{corollary}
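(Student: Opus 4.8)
The plan is to reduce the corollary to the local resolution of $\CW_N$ into the simple network $\CW'_N$ constructed above, performed independently in a small disk around each nonsimple branch point of the underlying spectral curve $\Si \to C$.

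First I would isolate the branch points $\fb_1,\dotsc,\fb_m$ of $\Si \to C$ whose monodromy is an $N_r$-cycle with $N_r > 2$; those with transposition monodromy are already simple, so only these need to be altered. I would choose pairwise-disjoint coordinate disks $D_r \ni \fb_r$, each containing no branch point of $\Si \to C$ other than $\fb_r$ and no joint of $\CW$, and identified in the coordinate with the disk of radius $1+\ep$ of the standard local model, so that by \cref{def:splitting} the restriction of $\CW$ to $D_r$ is identified with $\CW_{N_r}$. Picking a phase $\th_r$ for each $r$, I would form the resolved network $\CW'_{N_r}$ from the associated splitting as above and define $\CW'$ by replacing $\CW|_{D_r}$ with $\CW'_{N_r}$ for every $r$. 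Since condition (1) in the description of $\CW'_{N_r}$ forces its walls to agree with those of $\CW_{N_r}$ outside the unit disk, this surgery is compatible with the part of $\CW$ outside $\bigcup_r D_r$, and $\CW'$ is a simple network on $C$: it equals $\CW$ (already simple) away from the disks, while inside each $D_r$ its only branch points are the $\fb_{ij}$, which carry transposition monodromy. As noted in the text, each local resolution is realized by a small perturbation $\Si_t$ of $\Si$ supported in $D_r$; as the $D_r$ are disjoint, these combine to exhibit $\CW'$ as subordinate to a genuine branched cover $\Si'$, and the associated families $\Si_t$ identify homotopy classes of paths in $\Si$ and $\Si'$ with endpoints off $\bigcup_r D_r$.

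To establish equivalence of the path-lifting rules, I would take an open path $\wp$ with endpoints in the complement of $\bigcup_r D_r$ and decompose it, up to homotopy, as a concatenation $\wp = \wp_1\cdots\wp_\ell$ in which each factor either lies in the complement of $\bigcup_r D_r$ or lies in a single $D_r$ with endpoints on $\del D_r$, hence outside the unit disk of the local model. On factors of the first kind $\CW$ and $\CW'$ literally coincide; on factors of the second kind the preceding proposition gives $\bF(\ti\wp_s,\CW) = \bF(\ti\wp_s,\CW')$ once homotopy classes are identified via $\Si_t$. Multiplicativity of $\bF(-,\CW)$ under concatenation then assembles these into $\bF(\ti\wp,\CW) = \bF(\ti\wp,\CW')$, with closed paths and paths ending on walls handled by the standard perturbation arguments for $\bF$; this is the claimed equivalence.

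I expect the real obstacle to be not this bookkeeping but confirming that $\CW'$ is an honest spectral network globally --- that the $N_r-1$ branch cuts introduced inside each $D_r$ can be routed to coalesce with the branch-cut data of $\Si$ without creating new wall collisions or joints outside $\bigcup_r D_r$, and that the trajectory conditions defining a spectral network survive the surgery. This is precisely where one appeals to the fact that the resolution comes from a generic perturbation of $\Si$ localized near each $\fb_r$; granting that analytic input, the corollary follows from the locality and multiplicativity of the path-lifting formalism recalled in \cref{sec:snetapp}.
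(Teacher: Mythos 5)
Your proposal is correct and follows essentially the same route as the paper: the paper's proof is simply to note that $\CW$ is locally equivalent to some $\stdjnet$ near each nonsimple branch point, glue in the resolved network $\stdjnet'$ associated with a splitting, and repeat at every such branch point. Your version spells out the bookkeeping the paper leaves implicit --- the disjoint disks, the path decomposition, and the appeal to multiplicativity \textbf{PL1} together with the preceding proposition --- but introduces no new idea beyond what the paper's two-sentence argument already contains.
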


\begin{proof}
Since $\CW$ is equivalent to some $\stdjnet$ in a neighborhood of any branch point $\fb$, we can ``glue in'' the network $\stdjnet'$ associated with some splitting to obtain a new spectral network equivalent to $\CW$.  Doing this at every branch point with nonsimple ramification, we obtain a simple spectral network which is equivalent to $\CW$.
\end{proof}

\subsection{Toda Spectral Networks at Strong Coupling}\label{sec:todanetworks}

In this section we study in detail certain spectral networks of the periodic (nonrelativistic) Toda system, and use them to compute traces of holonomies on the associated wild character variety.  We view this system as a meromorphic Hitchin system on $\BP^1$ with singularities at $0$ and $\infty$ whose base is
\[
\CB_N \coloneqq \{(u_2(\frac{dz}{z})^2,\dotsc,u_{N-1}(\frac{dz}{z})^{N-1},(z+u_N+z^{-1})(\frac{dz}{z})^N: (u_2,\dotsc,u_N) \in \BC^{N-1}\}.
\]
The spectral curve $\Si_u \subset T^*\BC^*$ associated with a generic point $u \in \CB_N$ is a twice-punctured genus $N-1$ hyperelliptic curve, with cyclic monodromy around $0$ and $\infty$.  For generic $u$, $\Si_u$ has $2(N-1)$ simple branch points over $\BC^*$, and at $u_0 = (0,\dotsc,0)$ these coalesce into a pair of branch points at $\pm i$ with $N$-cyclic monodromy.  For our purposes it suffices to focus entirely on the spectral networks associated with $u_0$, and from now on we write $\Si$ for $\Si_{u_0}$ and $\ol{\Si}$ for the corresponding smooth projective curve.  In the language of Seiberg-Witten theory, we restrict our attention to the strong-coupling region of the Coulomb branch of pure $\CN=2$ $SU(N)$ gauge theory.  

To label the sheets of $\Si$ we introduce a pair of branch cuts running along the imaginary axis from $i$ to $\infty$ and from $-i$ to $0$.  We write $\om = e^{2\pi i/N}$, $\om_{ij} = \om^i - \om^j$, and label the sheets of $\Si$ by $\{0,1,\dotsc,N-1\}$.  The $k$th sheet is given by 
\[\la_k \coloneqq \frac{\om^k (z+z^{-1})^{1/N}}{z} dz,\]
where $(z+z^{-1})^{1/N}$ is taken to mean the branch with positive real values on $\BR_+$. As one crosses either branch cut from the right-half plane to the left half-plane the sheets are permuted by the $N$-cycle $(01\cdots(N-1))$.

Let $\CW_\th$ denote the spectral network of phase $\th$ subordinate to $\Si$.  The following lemma is our main source of qualitative control over the structure of $\CW_\th$, generalizing the $SU(2)$ analysis at the end of \cite{Klemm1996}.  In particular, the condition on $z'(t)$ in the statement applies to any parametrization of a wall of $\CW_\th$, and ultimately guarantees that they either lie entirely on the unit circle or never intersect it.

\begin{lemma}\label{lem:walls}
Fix $R>0$ and a phase $\phi$, and let $z(t)$ be a path in $\BC^*$ defined for $0 \leq t \leq R$.  Suppose that $z(t)$ satisfies
\[
\arg z'(t) = \phi + \arg \frac{z}{(z+z^{-1})^{1/N}}
\]
for $0 < t \leq R$, the value of $(z+z^{-1})^{1/N}$ being determined by analytic continuation along $z(t)$.  Suppose that for all sufficiently small $t > 0$, $|z(t)|$ is either less than $1$ and decreasing, equal to $1$ and constant, or greater than $1$ and increasing.  Then the same condition holds for all $t$.
\end{lemma}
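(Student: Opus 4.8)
The plan is to convert the statement into an elementary fact about a planar pair of coupled first--order ODEs with nonnegative coefficients. Write $f(z) \coloneqq (z+z^{-1})^{1/N}$ for the branch in question, analytically continued along $z(t)$; since the defining relation for $z'(t)$ requires $(z+z^{-1})^{1/N}\neq 0$, the path avoids the zeros $z=\pm i$ of $z+z^{-1}$ for $t\in(0,R]$, so there $f$ is a single--valued nonvanishing holomorphic function in a neighborhood of the path. First I would rewrite the hypothesis on $\arg z'(t)$ as the identity
\[
z'(t) = \rho(t)\,\frac{e^{i\phi}\,z(t)}{f(z(t))}, \qquad \rho(t) = |z'(t)|\,\frac{|f(z(t))|}{|z(t)|}\ge 0,
\]
valid for all $t$, with $\rho(t)>0$ exactly where $z'(t)\neq 0$ (on intervals where $z'$ vanishes there is nothing to prove).

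Next I would track the two real quantities $u(t)\coloneqq |z(t)|^2-1$ and $h(t)\coloneqq\Re\bigl(e^{-i\phi}f(z(t))\bigr)$. Differentiating $u=z\bar z-1$ along the path and substituting the formula for $z'$ gives
\[
u'(t) = 2\Re\bigl(\bar z\, z'\bigr) = \frac{2\rho\,|z|^2}{|z+z^{-1}|^{2/N}}\; h(t),
\]
so $u'=c_1 h$ with $c_1\ge0$; in particular $\operatorname{sgn}\tfrac{d}{dt}|z(t)| = \operatorname{sgn} h(t)$, and the three cases in the statement say precisely that $(u,h)$ lies, for small $t>0$, in $\{u<0,\ h<0\}$, at the origin, or in $\{u>0,\ h>0\}$. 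For $h$, using $f'/f = \tfrac1N(1-z^{-2})/(z+z^{-1})$ together with the formula for $z'$, the factors of $f$ cancel and $h'(t)$ collapses to $\tfrac{\rho}{N}\Re\bigl(\tfrac{z-z^{-1}}{z+z^{-1}}\bigr)$; clearing the denominator and observing that the cross terms $z\bar z^{-1}-\bar z z^{-1}$ are purely imaginary yields $\Re\bigl(\tfrac{z-z^{-1}}{z+z^{-1}}\bigr) = \tfrac{|z|^2-|z|^{-2}}{|z+z^{-1}|^2}$, so
\[
h'(t) = \frac{\rho\,(|z|^2+1)}{N\,|z|^2\,|z+z^{-1}|^2}\; u(t) = c_2(t)\, u(t), \qquad c_2\ge 0.
\]

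With the system $u'=c_1h$, $h'=c_2u$ and $c_1,c_2\ge0$ continuous in hand, the conclusion is formal. If $(u,h)$ lies in $\{u<0,\ h<0\}$ at some small $t_0$, then as long as $u<0$ and $h<0$ we have $u'\le0$ and $h'\le0$, so both $u$ and $h$ are nonincreasing and cannot reach $0$; a first--exit--time argument then shows the condition persists on all of $[0,R]$, and translating back via $\operatorname{sgn}\tfrac{d}{dt}|z| = \operatorname{sgn} h$ recovers the first case. The case $\{u>0,\ h>0\}$ is symmetric. In the remaining case $u(t_0)=h(t_0)=0$, the function $G\coloneqq u^2+h^2$ satisfies $|G'| = |2uh(c_1+c_2)| \le (c_1+c_2)\,G$ by the arithmetic--geometric mean inequality, so Gr\"onwall's lemma forces $G\equiv0$ once $G$ vanishes somewhere, i.e.\ $|z(t)|\equiv1$ and $|z|$ is constant.

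The derivative identities above are the crux, and the only point needing real care is setting them up correctly: one must verify that $(z+z^{-1})^{1/N}$ genuinely propagates to a single--valued holomorphic function in a neighborhood of $z(t)$ — this is where the imaginary--axis branch cuts of the Toda spectral curve have to be handled — and that $z(t)$, hence $h$ and $u$, is $C^1$. Once these are in place, and once one notes that the degeneracy of $c_1,c_2$ at $z=\pm i$ is harmless because the path meets $\{z+z^{-1}=0\}$ only at the branch point where the wall is born (at $t=0$), nothing further is required; the substantive work of the section lies in applying this lemma rather than in proving it.
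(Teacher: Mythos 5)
Your proof is correct, and it takes a genuinely different route from the one in the paper. The paper works with the single angular quantity $\th_z = \phi + \arg\frac{z}{(z+z^{-1})^{1/N}} - \arg z$, which measures the velocity against the radial direction, and proceeds by a geometric case analysis: it records the constant values $\th_a$, $\th_b$, $\th_c$ of $\th_z$ on the boundary arcs of the right and left unit half-disks, and argues inductively that a trajectory with $\th_{z(t)} \in (\frac\pi2,\frac{3\pi}{2})$ cannot exit a half-disk except through the imaginary axis into the next half-disk, where the relevant branch of $\th_z$ has shifted by $\frac{\pi}{N}$, so that after finitely many crossings the trajectory is trapped. Your argument instead isolates the pair $u = |z|^2-1$, $h = \Re\bigl(e^{-i\phi}(z+z^{-1})^{1/N}\bigr)$ and verifies the differential identities $u' = c_1 h$, $h' = c_2 u$ with $c_1, c_2 \geq 0$ (and in fact $c_1 > 0$, $c_2 > 0$ on $(0,R]$, since the defining relation forces $z'(t) \neq 0$ and $z(t) \neq \pm i$ there); the conclusion is then the invariance of the quadrants $\{u,h<0\}$ and $\{u,h>0\}$ for a monotone planar system, together with a Gr\"onwall argument for the degenerate case $u \equiv 0$ --- I checked the two derivative computations and the identity $\Re\frac{z-z^{-1}}{z+z^{-1}} = \frac{|z|^2-|z|^{-2}}{|z+z^{-1}|^2}$, and they are right. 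What your approach buys is brevity and completeness: it handles all three initial conditions and all positions of $z(0)$ uniformly, whereas the paper's proof defers several cases to ``a simple modification of this argument.'' What the paper's approach buys is finer qualitative information about how trajectories spiral through successive half-disks, which is what \cref{fig:todavectorfield} illustrates and which informs the surrounding discussion of which multiwalls lie inside versus outside the unit disk; but that information is not needed for the statement of the lemma itself, and for the lemma your Lyapunov-type argument is the cleaner one. The only points to spell out in a final write-up are the ones you already flag --- that $z(t)$ is $C^1$ so the chain rule applies, and that the coefficient $c_2$, which blows up at $z=\pm i$, stays bounded on $[t_0,R]$ because the path avoids $\pm i$ for $t>0$ --- plus the small observation that the configuration $u<0$, $h=0$ cannot persist (since then $h' = c_2 u < 0$), so the three hypotheses of the lemma really do correspond to the two open quadrants and the origin.
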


\begin{figure}
\includegraphics{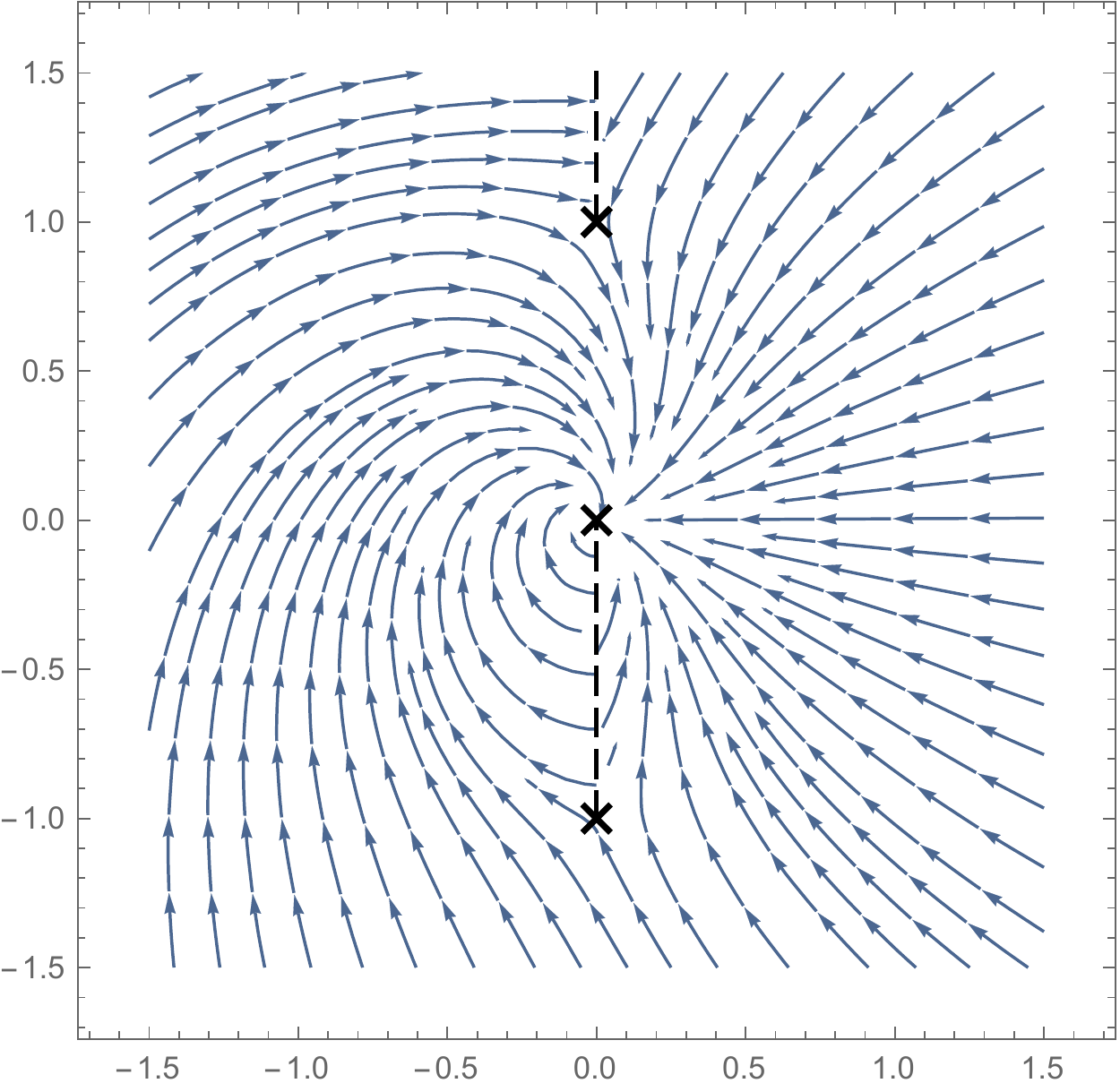}
\caption{Trajectories $z(t)$ satisfying $\arg z'(t) =-\frac{\pi}{2}+ \arg \frac{z(t)}{(z(t)+z(t)^{-1})^{1/3}}$.  \Cref{lem:walls} says that once a trajectory enters the unit disk it will always be moving closer to the origin.}
\label{fig:todavectorfield}
\end{figure}

\begin{proof}
For $z \nin \{0,i,-i\}$ write \[\th_z \coloneqq \phi + \arg\frac{z}{(z+z^{-1})^{1/N}} - \arg z.\]  Below, the branch of ${(z+z^{-1})^{1/N}}$  we mean will always be specified via analytic continuation from some $z(t)$.  In particular, $\frac{d}{dt}|z(t)|$ is negative, zero, or positive exactly when $\th_{z(t)}$ lies in $(\frac{\pi}{2},\frac{3\pi}{2})$, $\{\frac{\pi}{2},-\frac{\pi}{2}\}$, or $(-\frac{\pi}{2},\frac{\pi}{2})$, respectively.  Likewise, $\frac{d}{dt}\arg z(t)$ is negative, zero, or positive exactly when $\th_{z(t)}$ lies in $(-\pi,0)$, $\{0,\pi\}$, or $(0,\pi)$, respectively.  

Let $\CR$ denote the right unit half-disk, $\del \CR_a \coloneqq \{ ri: 0<r<1 \}$, $\del \CR_b \coloneqq \{z: \Re z > 0, |z|=1\}$, and $\del \CR_c \coloneqq \{ -ri: 0<r<1 \}$, so that
\[\del \CR = \del \CR_a \cup \del \CR_b \cup \del \CR_c \cup \{0,i,-i\}.\]  
Any branch of $\th_z$ on $\CR$ takes constant values $\th_a$, $\th_b$, and $\th_c$ along each of $\del \CR_a$, $\del \CR_b$, and $\del \CR_c$.  We have $\th_b = \th_c + \frac{\pi}{2N}$, $\th_a = \th_c + \frac{\pi}{N}$, and $\th_c <\th_z <\th_a$ for all $z$ in the interior of $\CR$.  More precisely, for fixed $0<r <1$, $\th_{re^{i\vphi}}$ goes from $\th_c$ to $\th_a$ as $\vphi$ goes from $-\frac\pi2$ to $\frac\pi2$, and for fixed $\vphi \in (-\frac\pi2,\frac\pi2)$ $\th_{re^{i\vphi}}$ goes from $\th_a + \frac{2\vphi+\pi}{2N}$ to $\th_b$ as $r$ goes from $0$ to $1$.  Analogous statements hold replacing $\CR$ by $-\CR$ throughout. 

Suppose that $z(t) \in \CR$ and $\th_{z(0)} \in (\frac\pi2,\frac{3\pi}{2})$, where $\th_{z(0)} \coloneqq \lim_{t \to 0}\th_{z(t)}$ (note that we may have $z(0) = \pm i$ even though $\th_{\pm i}$ is undefined).  Consider the branch of $\th_z$ on $\CR$ determined by $z'(t)$.  We consider several cases depending on the values of $\th_a$ and $\th_c$:

\begin{enumerate}
\item $\mathbf{\th_c \leq \pi \leq \th_a}$:  If $z(t)$ never leaves $\CR$ the claim follows since $\th_{z(t)} \in (\frac\pi2,\frac{3\pi}{2})$ for all $t$. Otherwise there is some $t_1<R$ with $z(t) \in \CR$ for $0\leq t \leq t_1$ and $z(t) \nin \CR$ for all sufficiently small $t>t_1$.  But the hypothesis on $\th_a$, $\th_c$ implies that $z'(t_1)$ cannot be directed towards the complement of $\CR$ regardless of where $z(t_1)$ lies along $\del \CR$, a contradiction.  

\item$\mathbf{\frac\pi2 \leq \th_c < \th_a < \pi}$:  Again if $z(t)$ never leaves $\CR$ the claim follows, otherwise let $t_1<R$ be as above.  We necessarily have $z(t_1) \in \del \CR_a$ since this is the only part of the boundary along which $z'(t_1)$ could be directed towards the complement of $\CR$.  But now inductively suppose suppose that for some $t_n$, $z(t_n) \in (-1)^n\del \CR_c$ and $z(t) \in (-1)^n\CR$ for all sufficiently small $t > t_n$.  Let $\th^n_z$ denote the relevant branch of $\th_z$ on $(-1)^n \CR$, and $\th^n_a$, $\th^n_c$ its values on $(-1)^n \del \CR_a$, $(-1)^n \del \CR_c$.  Suppose also that $\th^n_a = \th_c + \frac{n\pi}{N}$. If $z(t)$ never leaves $(-1)^n\CR$ for $t > t_n$ or if $\th^n_c \leq \pi \leq \th^n_a$ the claim follows.  Otherwise, there is some minimal $t_{n+1} > t_n$ satisfying the above hypotheses.  But since $\th^n_a = \th_c + \frac{n\pi}{N}$ we eventually have $\th^n_c\leq \pi \leq\th^n_a$. 

\item $\mathbf{\frac\pi2 - \frac{\pi}{N}<\th_c<\frac\pi2}$:  Let $\CR^{in} = \{z \in \CR: \th_z \in [\frac\pi2,\frac{3\pi}{2}]\}$.  There is an arc $\del \CR_{c}^{in} \subset \del \CR^{in}$ with $\th_z = \frac\pi2$ for all $z \in\del \CR_{c}^{in}$, and which intersects every circle of radius $r \in (0,1)$ around the origin at exactly one point.  Depending on whether $\th_b \in (0,\frac\pi2)$, $\th_b = \frac\pi2$, or $\th_b \in (\frac\pi2,\pi)$, one endpoint of $\del \CR_{c}^{in}$ is $0$ and the other is $i$, $1$, or $-i$, respectively.  Again, if $z(t)$ never leaves $\CR^{in}$ the claim follows, otherwise it must cross $\del \CR_a$ as it leaves and we are in the inductive situation above.
\end{enumerate}

The remaining cases $\th_{z(0)} \in (\pi,\frac{3\pi}{2})$ or $\th_{z(0)}\in (-\frac\pi2,\frac\pi2)$ can be dealt with through a simple modification of this argument.
\end{proof} 

In general any wall of $\CW_\th$ will be coincident with several other walls with distinct labels.  We refer to a maximal collection of coincident walls as a multiwall.  In $\CW_\th$ there are $2(N+1)$ multiwalls born at $i$, and up to rotation and choice of branch cut their label sets are the same as those of $\stdknet$ (likewise for $-i$).\footnote{This holds for any network where multiplication by $N$th roots of unity acts transitively on the sheets of the spectral curve, in which case the possible label sets correspond to the $2(N+1)$ possible values of $\arg(\om_{ij})$.}  Note that as $z \to \pm i$, 
\[
\frac{(z+z^{-1})^{1/N}}{z} \sim \mp 2i (z\mp i)^{1/N}.\]
  In particular, for generic $\th$, $N+1$ of the multiwalls born at $i$ lie initially inside the unit disk and $N+1$ lie initially in its complement.  It follows from \cref{lem:walls} that these multiwalls in fact lie entirely inside the unit disk or its complement, respectively.  We take those lying in the interior of the unit disk to define a splitting of $\CW_\th$ at $i$ in the sense of \cref{def:splitting}.  Following \cref{def:splittingdata} we have associated sets $\Phi_\th$ and $\De$ of positive and simple positive roots of $SL_{N}$, and a permutation $\si$ defined by 
\[
\De = \{\si_0\si_1,\si_1\si_2,\dotsc,\si_{N-2}\si_{N-1}\}.\]
  We likewise have subsets $\De_a$ and $\De_b$ of $\De$ consisting of the labels of the first and second-to-last multiwalls lying in the unit disk, taken in clockwise order around $i$.  We will also 

We define a second choice of simple positive roots
\[
\De' \coloneqq \{ \tau_a(ji): ij \in \De \},
\]
where $\tau_a$ is the product of the simple reflections across the elements of $\De_a$ (these commute, so we needn't specify their order).  We define $\Phi_\th'$, $\De'_a$, and $\De'_b$ similarly, noting that $\De'_a = \De_a$.  Equivalently, $\De'$ is the choice of simple roots associated with the splitting of $\CW_\th$ at $i$ composed of walls lying in the complement of the unit disk, relative to a branch cut isotoped from our original cut by a clockwise rotation to initially lie tangent to the unit circle.\footnote{Though from this point of view we have reversed our convention about which subset is $\De'_a$ and which is $\De'_b$.}  

For $0\leq j,k < N$, let $R_{jk}$ (resp. $L_{jk}$) denote the oriented simple closed curves on $\Si$ whose projection to $\BC^*$ is the right (resp. left) unit semicircle, and which crosses from sheet $j$ to sheet $k$ over $-i$ and from sheet $k$ to sheet $j$ over $i$.  For $1 \leq k < N$ we let 
\[
\al_k \coloneqq \si_{k-1}\si_k \in \De, \quad \al'_k \coloneqq \tau_a(\si_k\si_{k-1}) \in \De',
\]
and
\[
\ga_k \coloneqq R_{\al_k}, \quad \ga'_k \coloneqq L_{\al'_k}.
\]

Let us recall some terminology concerning BPS spectra; for a mathematical audience we will simply define the relevant notions directly in terms of a spectral curve.

\begin{defn}\label{def:bpsstring}
Let $C$ be a Riemann surface and $\Si \subset T^*C$ a branched cover of $C$.  An oriented path $p_c$ in $C$ is a BPS $ij$-string of phase $\th$ if $i$ and $j$ are sheets of $\Si$ over $p_c$ and
\begin{equation}\label{eq:bps}
\arg \<p_c'(t)|\la^{(i)}-\la^{(j)}\> = \th
\end{equation}
for any oriented parametrization of $p_c$.  Here $\la^{(i)}$ and $\la^{(j)}$ are the 1-forms on $C$ given by the sheets $i$ and $j$.
\end{defn} 

\begin{defn} A finite BPS web of phase $\th$ is a union of finitely many compact BPS strings of phase $\th$ such that an endpoint of any $ij$-string is either a branch point where sheets $i$ and $j$ meet, or a junction where three BPS strings in the web meet.  The labels of the strings meeting at a junction should be of the form $ij$, $jk$, and $ki$, and these strings should be oriented all towards or all away from the junction.
\end{defn} 

Each BPS $ij$-string $p_c$ has an associated lift to a pair of segments in $\Si$.  One is its preimage on sheet $j$ with the same orientation and the other its preimage on sheet $i$ with the reversed orientation.  Given a finite BPS web $\fweb$, the union $\fweb_\Si$ of the lifts of its BPS strings is a closed oriented path in $\Si$.  The following definition is not quite standard as it does not record multiplicities; rather, it is the minimal notion we need to talk about cluster coordinates on a toric nonabelianization chart.  

\begin{defn} The BPS spectrum of a curve $\Si \subset T^*C$ is
\[
\bps = \{ [\fweb_\Si]: \fweb_\Si \text{ is the lift of a finite BPS web}\} \subset H_1(\Si,\BZ).
\]
Each phase $\th$ determines a subset of positive charges
\[
\bps_+ = \{ [\fweb_\Si] \in \bps: \th < \arg Z(\fweb_\Si) < \th+\pi \}.
\]
Here $Z$ denotes the central charge
\[
Z: \ga \mapsto \int_\ga \la,
\]
where $\la$ is the restriction of the Liouville form on $T^*C$.  Let $\basis \subset \bps_+$ be the unique subset with the property that any element of $\bps_+$ may be written uniquely as a positive integral combination of elements of $\basis$, if such a subset exists.  The BPS quiver $Q_{\Si,\th}$ has vertices indexed by the elements of $\basis$ and $\max\{0,\<\ga,\ga'\>-\<\ga',\ga\>\}$ arrows from vertex $[\ga]$ to vertex $[\ga']$.
\end{defn}

The strong-coupling BPS spectrum of pure $\CN=2$ $SU(N)$ Yang-Mills was computed in \cite{Lerche2000} and later in \cite{Alim2011} using different methods.  Below we show how \cref{lem:walls} lets us exactly calculate the finite BPS webs of the Toda spectral curve $\Si$, recomputing the strong-coupling spectrum directly as defined in terms of BPS strings (mathematically, these previous computations refer to distinct ways of defining the spectrum).  A Mathematica computation of the finite webs in the $N=3$ case was given in \cite{Gaiotto}.  Note that below we consider the closed curve $\ol{\Si}$ in place of the open curve $\Si$.

\begin{prop}\label{prop:todabps}
With respect to a generic phase $\th$, the positive part of the BPS spectrum of $\Si$ is 
\[
\BB_+ = \{[R_{ij}]: ij \in \Phi_\th\} \cup \{[L_{ij}]: ij \in \Phi_\th'\}
\]
and its positive integral basis is 
\[
\basis = \{[\ga_i], [\ga'_i]: 1 \leq i < N\}.
\]
The BPS quiver is $Q_{N-1}$, where under our labeling $\ga_j$ corresponds to vertex $2j$ (resp. $2j-1$) if $\al_j \in \De_b$ (resp. $\De_a$), and $\ga'_j$ corresponds to vertex $2j$ (resp. $2j-1$) if $\al'_j \in \De'_a$ (resp. $\De'_b$).  
\end{prop}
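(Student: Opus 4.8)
The plan is to determine exactly which finite BPS webs the curve $\ol{\Si}$ supports, and then read off $\BB_+$, the positive integral basis $\basis$, and the quiver in turn. The first and main structural input is \cref{lem:walls}. Observe that a BPS $ij$-string of phase $\th$ is precisely a trajectory of the differential equation appearing there: since $\la^{(i)}-\la^{(j)} = \om_{ij}(z+z^{-1})^{1/N}z^{-1}dz$, the condition $\arg\<p_c'(t)\mid\la^{(i)}-\la^{(j)}\>=\th$ rearranges to $\arg z'(t) = (\th-\arg\om_{ij}) + \arg\frac{z}{(z+z^{-1})^{1/N}}$, which is the hypothesis of \cref{lem:walls} with $\phi=\th-\arg\om_{ij}$. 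Hence any BPS string either lies on the unit circle or is strictly monotone in $|z|$; in the monotone case it has infinite length, because $\int(\la^{(i)}-\la^{(j)})$ diverges both near $0$ and near $\infty$, so it cannot be a compact arc of a finite web. To rule out a finite web that dips off the circle at a junction, I would consider the minimum $m$ and maximum $M$ of $|z|$ over a finite web $\fweb$: a point realizing $m$ is not interior to a string (monotonicity forbids interior extrema off the circle) and is not a branch point unless $m=1$, so if $m<1$ it is a junction $z_*$ at which $|z|$ is nondecreasing along each of the three strings leaving $z_*$. But at a junction the three one-forms $\la^{(i)}-\la^{(j)}$, $\la^{(j)}-\la^{(k)}$, $\la^{(k)}-\la^{(i)}$ telescope to $0$ there, and the BPS condition fixes each outgoing tangent direction as $\th$ minus the argument of the corresponding one-form; consequently the three outgoing directions, scaled by suitable positive reals, sum to $0$, so they cannot all lie in the open half-plane $\{w:\Re(\overline{z_*}\,w)>0\}$ — contradicting that $|z|$ increases outward along all three. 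Thus $m\ge1$, and symmetrically $M\le1$, so $\fweb\subset\{|z|=1\}$.

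Next I would classify finite webs on the circle. There are no branch points on $\{|z|=1\}$ other than $\pm i$, and no junctions: a junction of strings $ij,jk,ki$ on an arc of the circle would force $\arg\om_{ij}\equiv\arg\om_{jk}\pmod\pi$, but $\arg(\om^a-\om^b)\equiv\frac{\pi(a+b)}{N}\pmod\pi$, so this would require $i\equiv k\pmod N$, impossible for a junction. Hence a finite web is a single $ij$-string running along the right or the left unit semicircle from $i$ to $-i$. Along the right semicircle $z+z^{-1}$ is real and nonnegative, so the trajectory equation forces the phase to be $\arg\om_{ij}\pm\tfrac\pi2$; with an orientation, the lift of this web is the cycle $R_{ij}$ (and $R_{ji}=-R_{ij}$), and $Z(R_{ij})=\int_{R_{ij}}\la$ is $i\,\om_{ij}$ times the positive constant $c=\int_{-\pi/2}^{\pi/2}(2\cos\vphi)^{1/N}\,d\vphi$. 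The left semicircle is treated the same way, the only extra ingredient being the branch cut from $-i$ to $0$, which relabels the sheets by an $N$-cycle — this is exactly what the twist $\tau_a$ records in the definition of $\De'$ and $\Phi_\th'$. So $\bps$ consists of the classes $\pm[R_{ij}]$ and $\pm[L_{ij}]$, and for generic $\th$ we have $[R_{ij}]\in\bps_+$ iff $\arg Z(R_{ij})=\arg\om_{ij}+\tfrac\pi2\in(\th,\th+\pi)$. Comparing this inequality with \cref{def:splittingdata} — the splitting of $\CW_\th$ at $i$ by the multiwalls lying inside the unit disk, whose associated roots are $\Phi_\th$ — identifies $\{ij:[R_{ij}]\in\bps_+\}$ with $\Phi_\th$, and the $\tau_a$-twisted version identifies $\{ij:[L_{ij}]\in\bps_+\}$ with $\Phi_\th'$. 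This gives the stated description of $\BB_+$.

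For the basis, the telescoping identity of $1$-chains $R_{ij}+R_{jk}=R_{ik}$ (and its $L$-analog) shows that $\{[R_{ij}]:ij\in\Phi_\th\}$ is generated, with positive integer coefficients and minimally, by the classes indexed by the simple roots of $\Phi_\th$, namely $\{[\ga_i]=[R_{\al_i}]:1\le i<N\}$; symmetrically $\{[L_{ij}]:ij\in\Phi_\th'\}$ is minimally positively generated by $\{[\ga_i']:1\le i<N\}$. To conclude $\basis=\{[\ga_i],[\ga_i']\}$ I would check that the $R$- and $L$-classes do not mix (no $[R_{ij}]$ is a positive combination that involves some $[L_{kl}]$, and vice versa) and that the $2(N-1)$ classes $[\ga_i],[\ga_i']$ are linearly independent in $H_1(\ol{\Si},\BZ)\cong\BZ^{2(N-1)}$; both follow by separating right- and left-semicircle contributions, e.g.\ by pairing with a fixed transversal cycle. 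Finally, for the quiver I would compute the Dirac pairing — the intersection form on $H_1(\ol{\Si},\BZ)$ — among the $\ga_i$ and $\ga_i'$: the curves $R_{\al_i}$ and $R_{\al_{i+1}}$ share exactly one sheet over the right semicircle and, after a small perturbation off $\pm i$, meet transversally in one point; a right-semicircle curve meets a left-semicircle curve only over $\pm i$; and same-label or far-apart curves are disjoint. Tracking these intersections in terms of $\si$ and the partition $\De=\De_a\sqcup\De_b$, and placing each basis element at the vertex $2j$ or $2j-1$ prescribed by the statement, should reproduce the adjacency matrix of $Q_{N-1}$ — the double arrows $2j-1\rightrightarrows 2j$ coming from the pair $(\ga_j,\ga_j')$ occupying a rung, and the single arrows $r_j,\ell_j$ from consecutive generations.

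I expect the main obstacle to be this last intersection-number computation. The cycles $R_{ij}$ and $L_{ij}$ are built so as to pass through the two branch points $\pm i$, which are smooth points of $\ol{\Si}$ through which many of the relevant cycles run simultaneously; to extract honest transverse intersection numbers one must first isotope them into general position in a controlled way, and keeping this bookkeeping straight — together with the dependence on $\si$ and on which roots lie in $\De_a$ versus $\De_b$, and the normalization under which the pairing yields $Q_{N-1}$ itself rather than a rescaling of it — is where the real work lies. A secondary delicate point is the branch-cut bookkeeping in the identification of $\Phi_\th'$: one must carefully isotope the cut near $i$ and track the $N$-cycle relabelling to pin down the twist $\tau_a$ exactly. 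Both are routine given the resolution machinery of \cref{sec:nonsimple} and \cref{cor:simpleres}, which also provides the simple-network models in which the individual wall crossings — hence the pairings — are unambiguous, but each requires care.
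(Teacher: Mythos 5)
Your proposal is correct and follows the same skeleton as the paper's proof: \cref{lem:walls} plus a minimum-of-$|z|$-at-a-junction contradiction to confine finite webs to the unit circle, the simple roots $\De$, $\De'$ to extract $\basis$, and intersection numbers of the $\ga_i,\ga'_i$ to read off $Q_{N-1}$. Your junction argument (the three outgoing one-forms telescope to zero, so the three tangent directions cannot all point away from the minimum) is the same contradiction the paper phrases as ``if $z_{min}$ minimizes $|z|$ on two of these strings it cannot minimize it on the third.'' The one place you take a genuinely different route is in enumerating the webs that do lie on the circle and sorting out which are positive: you exclude junctions on the circle directly via the computation $\arg(\om^a-\om^b)\equiv\frac{\pi(a+b)}{N}+\frac{\pi}{2}\pmod\pi$ and then compute $Z(R_{ij})$ explicitly to match $\bps_+$ against $\Phi_\th\cup\Phi_\th'$, whereas the paper obtains exactly these webs as degenerations of the walls of $\CW_\vphi$ born at $\pm i$ as $\vphi$ sweeps from $\th$ to $\th+\pi$, which hands it the identification with the splitting data for free. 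Your version is more self-contained and makes the central-charge ordering explicit; the paper's is shorter and ties the answer directly to \cref{def:splittingdata}. Both leave the final intersection-number bookkeeping at the level of ``an elementary computation,'' so you have not missed anything the paper actually supplies.
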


\begin{proof}  
Suppose $\fweb$ is a finite BPS web that intersects the interior of the unit disk nontrivially.  Since $\fweb$ is a finite union of closed segments, a minimum value of $|z|$ is attained at some $z_{min} \in \fweb$.  It follows from \cref{lem:walls} that if $p_c$ is a BPS string in $\fweb$ with $z_{min}\in p_c$, then $z_{min}$ must lie at an endpoint of $p_c$.  In particular, $z_{min}$ has to lie at a junction of three BPS strings.  But the labels of these are of the form $ij$, $jk$, and $ki$, and it follows from \cref{eq:bps} that if $z_{min}$ minimizes $|z|$ on two of these strings it cannot minimize it on the third, a contradiction.  Similarly there are no finite webs that intersect the complement of the unit disk nontrivially.

On the other hand, as $\vphi$ varies from $\th$ to $\th+\pi$, the walls of $\CW_\vphi$ born at $i$ rotate around $i$ and degenerate to finite webs covering the right and left unit semicircles at certain critical values of $\vphi$.  The canonical lifts of these finite webs are exactly the paths $\{R_{ij}: ij \in \Phi_\th\} \cup \{L_{ij}: ij \in \Phi_\th'\}$, and any finite web which includes a BPS string lying on the unit circle arises in this way.  

The cycles $\ga_i$, $\ga'_i$ comprise the simple roots $\De \subset \Phi_\th$, $\De' \subset \Phi_\th'$, hence make up the basis $\basis$.  Noting that they can be deformed to intersect only at the branch points, it is an elementary computation to derive the quiver $Q_{N-1}$ from their intersection numbers. 
\end{proof}

Let $\CM_{SL_N}(C)$ denote the wild $SL_N$-character variety of $C=\BC^*$ related to the periodic Toda phase space by the wild nonabelian Hodge correspondence.  We will not require a careful definition of this space or of the relevant Stokes data at $0$, $\infty$.  Indeed, there are unresolved technical issues in extending the nonabelian Hodge correspondence to the present setting, as the treatment in \cite{Biquard2000} makes certain semisimplicity assumptions not satisfied by Toda systems.  However, it is expected that this assumption can be lifted, see for example \cite{Witten2007}.  More to the point, these analytic issues are orthogonal to our immediate goals.  For our purposes it is enough that we can use the spectral network $\CW_\th$ to obtain an unambiguous definition of the pullback from $\CM_{SL_N}(C)$ to $\CM_{GL_1}(\ol{\Si})$  of the trace of the holonomy around a closed path in $C$.

Recall that the spectral network $\CW_\th$ defines a nonabelianization map $\Psi_{\CW_\th}: \CM_{GL_1}^{tw}(\ol{\Si}) \to \CM_{SL_N}^{tw}(C)$ between spaces of twisted local systems.  Passing to untwisted local systems requires a choice of spin structure.  Recall that a spin structure on $\ol{\Si}$ may be given as a quadratic refinement 
\[
\qr: H_1(\ol{\Si},\BZ) \to \BZ/2\BZ,\quad \qr(\ga+\ga') = (-1)^{\<\ga,\ga'\>}\qr(\ga)\qr(\ga').
\]
  For each $\ga \in H_1(\ol{\Si},\BZ)$, let $y_\ga \in \BC[\CM_{GL_1}(\ol{\Si})]$ be the holonomy around $\ga$, and $\wh{y}_\ga \in \BC[\CM_{GL_1}^{tw}(\ol{\Si})]$ the corresponding twisted holonomy (that is, the holonomy around the canonical homology lift of $\ga$ \cite{Johnson1980}).  A quadratic refinement $\qr$ defines an isomorphism 
\[
\io_\qr: \CM_{GL_1}(\ol{\Si}) \to \CM_{GL_1}^{tw}(\ol{\Si}),\quad \io_\qr^*: \wh{y}_\ga \mapsto (-1)^{\qr(\ga)}y_\ga.
\]  

From now on let $\si$ denote the following quadratic refinement $\qr$ on $\ol{\Si}$.  Let
\[
B_i \coloneqq \begin{cases} \ga_i & \al_i \in \De_a \\ \ga'_i & \al_i \in \De_b.\end{cases}  
\]
Let $D_i$ be the homology class of the simple closed curve in $\ol{\Si}$ which covers the clockwise-oriented unit circle, lies on sheet $\si_i$ above the right unit semicircle, and lies on sheet $\tau_a\si_i$ above the left unit semicircle.  Then if $A_j \coloneqq \sum_{k \geq j}D_k$ for $1\leq j \leq N$, the reader may check that $\<A_i,B_j\> = \de_{ij}$.  We then define $\qr$ by 
\[
\qr(A_i) = 1, \quad \qr(B_i) = -1.
\]
Note that
\[
\ga_i = \begin{cases} B_i & \al_i \in \De_a \\ A_{i-1} - 2A_i + A_{i+1} - B_i & \al_i \in \De_b \end{cases},\quad \ga'_i = \begin{cases} B_i & \al_i \in \De_b \\ A_{i-1} - 2A_i + A_{i+1} - B_i & \al_i \in \De_a \end{cases},
\]
where we let $A_0 = A_N = 0$.  In particular, $\qr(\ga_i) = \qr(\ga'_i) = -1$ for all $i$, which will be the main property we care about.  

Let $y_1,\dotsc,y_{2(N-1)} \in \BC[\CM_{GL_1}(\ol{\Si})]$ denote the holonomies around the elements of $\basis$, indexed according to the identification of $\basis$ with the vertices of $Q_{N-1}$ described in \cref{prop:todabps}.  Let $\wp$ denote the clockwise-oriented unit circle, $V_k = \bigwedge^k\BC^N$, and $\tr_{V_k}\Hol_{\ti\wp} \in \BC[\CM^{tw}_{SL_n}(C)]$ the trace in $V_k$ of the holonomy around the canonical lift $\ti{\wp}$ of $\wp$ to the unit tangent bundle.  The network $\CW_\th$ lets us pull back $\tr_{V_k}\Hol_{\ti\wp}$ along $\Psi_{\CW_\th}$ to $\CM^{tw}_{GL_1}(\ol{\Si})$, and the quadratic refinement $\qr$ lets us pull it further back along $\io_\qr$ to $\CM_{GL_1}(\ol{\Si})$.  

The sublattice $\Ga \subset H_1(\ol{\Si},\BZ)$ generated by $\basis$ has index $N$, and the maps
\[
\Ga \hookrightarrow H_1(\ol{\Si},\BZ) \hookrightarrow \Ga^*
\]
induce maps of tori
\[
\CX_{Q_{N-1}} \twoheadleftarrow \CM_{GL_1}(\ol{\Si}) \twoheadleftarrow \CA_{Q_{N-1}}.
\]
Here $\CX_{Q_{N-1}} \cong \Hom(\Ga,\BC^*)$ and $\CA_{Q_{N-1}} \cong \Hom(\Ga^*,\BC^*)$ as in \cref{sec:clusterapp} and the composition of these maps is the usual map $y_j \mapsto \prod_i x_i^{Q_{ji}}$.  With this in mind we conflate $\io_\qr^*\Psi_{\CW_\th}^*\tr_{V_k}\Hol_{\ti\wp}$ with its expression in coordinates on $\CA_{Q_{N-1}}$ in order to compare it with the relativistic Toda Hamiltonian $H_k$.
 
\begin{thm}\label{thm:holotonian}
For generic $\th$ and $1 \leq k \leq N$, we have $H_k = \io_\qr^*\Psi_{\CW_\th}^*\tr_{V_k}\Hol_{\ti\wp}.$
\end{thm}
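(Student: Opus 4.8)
The plan is to prove the following more precise statement, from which the theorem follows at once: the matrix $\io_\qr^*\Psi_{\CW_\th}^*\Hol_{\ti\wp}\in SL_N$, written in the basis of $\BC^N = V_1$ indexed (in a suitable order) by the sheets of $\ol{\Si}$, equals the image of $(y_1,\dotsc,y_{2(N-1)})$ under the cluster chart $T\to SL_N^{c,c}$ of \cref{sec:paths}. Indeed, granting this, taking the trace in $V_k = \bigwedge^k\BC^N$ and applying Gessel--Viennot exactly as in \cref{prop:weightedsum} identifies $\tr_{V_k}$ of this matrix with the weighted sum over nonintersecting $k$-tuples of closed directed paths in $\ol{\CN}_{\mb{i}}$, which is $H_k$.

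First I would pass to an equivalent simple network. By \cref{cor:simpleres}, the splitting of $\CW_\th$ at $i$ by the multiwalls lying in the open unit disk, together with the analogous splitting at $-i$, resolves $\CW_\th$ into a simple network $\CW_\th'$ whose nonsimple branch points at $\pm i$ are replaced by clusters of $N-1$ simple branch points each, lying just inside the unit circle. Here \cref{lem:walls} ensures that no wall of $\CW_\th'$ meets $\wp$ outside small neighborhoods of $\pm i$, and in a local coordinate centered at $i$ the curve $\wp$ looks like the real axis of the model resolution $\stdknet'$ of \cref{sec:nonsimple}; hence $\wp$ crosses precisely $N-1$ walls near $i$ and $N-1$ near $-i$, transversally, and $\bF(\ti\wp,\CW_\th) = \bF(\ti\wp,\CW_\th')$.

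Next I would evaluate $\Psi_{\CW_\th'}^*\Hol_{\ti\wp}$ with the path-lifting rule of \cite{Gaiotto}. Going once around $\wp$ it is an ordered product of the diagonal parallel-transport factors of the $GL_1$-local system on $\ol{\Si}$ along the lifts of $\wp$ to the $N$ sheets, interleaved with $2(N-1)$ elementary unipotent detour factors, one for each wall crossed. The heart of the argument is to make this explicit using the Toda data of \cref{sec:todanetworks}: with the sheets labeled by $\la_k = \om^k(z+z^{-1})^{1/N}\,dz/z$ and the cuts along the imaginary axis, I would read off from \cref{def:splittingdata} and the four-part description of $\CW_\th'$ the labels and cyclic order of the walls met by $\wp$, and check that the resolved spectral curve in a neighborhood of $\wp$ is isotopic, as an oriented graph, to $\ol{\CN}_{\mb{i}}$: the $N$ sheets of $\ol{\Si}$ become the horizontal strands, the $N-1$ walls near $i$ and the $N-1$ near $-i$ become the two families of vertical edges, and the face-holonomies $y_\ga$ of the $GL_1$-system match the face variables $y_j$. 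Under this identification the detour sum of \cite{Gaiotto} becomes verbatim the weighted path sum over $\ol{\CN}_{\mb{i}}$ defining the cluster chart of \cref{sec:paths}, with the diagonal factors producing its $y^{\om_j^\vee}$ pieces and in particular the leading scalar $\wht(P_{\min}) = x^{\coind {M_k^\la}}$ computed in the proof of \cref{thm:heqcc}.

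Finally I would account for the passage from twisted to untwisted local systems via $\io_\qr$. Each detour factor carries the sign picked up by the canonical homology lift of a path turning about a branch point, and the purpose of the quadratic refinement $\qr$ singled out in \cref{sec:todanetworks}---whose one relevant property is that $\qr(\ga_i) = \qr(\ga'_i) = -1$ for every $i$---is precisely to cancel these signs, so that $\io_\qr^*\Psi_{\CW_\th}^*\Hol_{\ti\wp}$ is the totally nonnegative factorization $E_1 y_1^{\om_1^\vee}F_1\cdots$ itself rather than a signed variant. Combining this with the previous step proves the matrix identity and hence the theorem. I expect the genuine obstacle to be the middle step: distilling from the implicit recipe of \cref{sec:nonsimple}, specialized to the symmetric Toda curve, a description of the walls crossed by $\wp$ explicit enough to see the isotopy with $\ol{\CN}_{\mb{i}}$ and hence the term-by-term match with the factorization of \cref{sec:paths}; the sign bookkeeping with $\qr$ and the comparison of weights are fiddly but mechanical once that is in place.
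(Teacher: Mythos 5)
Your proposal is correct and follows essentially the same route as the paper's proof: resolve $\CW_\th$ to a simple network via the splittings at $\pm i$, identify the union of lifted paths appearing in $\bF(\ti\wp,\CW'_\th)$ with the annular graph $\ol{\CN}_{\mb{i}}$ (matching face holonomies with the $y_j$), cancel signs using the chosen quadratic refinement, and fix the overall normalization against $\wht(P_{\min})$ via the homology relation among the $D_i$, $\ga_i$, $\ga'_i$. The only cosmetic difference is that you phrase the comparison as a matrix identity in $V_1$ followed by Gessel--Viennot, whereas the paper works directly with $\tr_{V_k}$ as a sum over nonintersecting $k$-tuples of lifted closed paths; these are the same computation.
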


\begin{proof}
Following \cref{cor:simpleres}, we can resolve $\CW_\th$ by an equivalent simple spectral network $\CW'_\th$ subordinate to a cover $\Si'$ which deforms $\Si$.  Recall from \cref{prop:weightedsum} that the expansion of $H_k$ in cluster coordinates is the weighted sum of nonintersecting $k$-tuples of closed directed paths in the directed annular graph $\ol{\CN}_{\mb{i}}$.  Here we define another graph $\ol{\CN}_\CW$, equivalent to $\ol{\CN}_{\mb{i}}$ in the sense described in \cref{sec:paths}.  This graph will be embedded into $\Si'$ as the union of the paths appearing in the expansion $F(\wp,\CW'_\th)$ used to compute $\Psi_{\CW_\th}^*\tr_{V_k}\Hol_{\ti\wp}$.  This embedding induces an isomorphism on first homology groups that identifies  $\basis$ with the set of face cycles of $\ol{\CN}_\CW$ and $\ol{\CN}_{\mb{i}}$.  The result then follows by establishing a suitable bijection between the terms appearing in $F(\wp,\CW'_\th)$ and the maximal directed paths in $\ol{\CN}_\CW$ and $\ol{\CN}_{\mb{i}}$.  

Analogously with $\CN_{\mb{i}}$, we define a graph $\CN_{\CW}$ via the following embedding into $[0,1]^2$:
\begin{align*}
\CN_{\CW} =& \left( \bigcup_{k=0}^{N-1} [0,1]\times\{\frac{k+1}{N+1}\} \right)
\cup \left( \bigcup_{\al_i \in \De_b}\{\frac15,\frac45\} \times [N+1-i,N-i] \right)\\
&\quad
\cup \left( \bigcup_{\al_i \in \De_a}\{\frac25,\frac35\} \times [N+1-i,N-i] \right).
\end{align*}
The vertical edges of the form $\{\frac15\} \times [N+1-i,N-i]$ and $\{\frac35\} \times [N+1-i,N-i]$ are directed upward, those of the form $\{\frac25\} \times [N+1-i,N-i]$ and $\{\frac45\} \times [N+1-i,N-i]$ are directed downward, and the horizontal edges are directed leftward.  We let $\ol{\CN}_\CW$ denote the closed directed graph which is the image of $\CN_\CW$ in $S^1\times [0,1]$ after identifying the left and right edges of $[0,1]^2$.  The graph $\ol{\CN}_{\CW}$ is equivalent to $\ol{\CN}_{\mb{i}}$ in the sense described in \cref{sec:paths}; in particular, there is a canonical isomorphism $H_1(\ol{\CN}_{\CW},\BZ) \cong H_1(\ol{\CN}_{\mb{i}},\BZ)$ and a bijection between their closed directed paths.  

Consider the splitting of $\CW_\th$ at $-i$ consisting of walls lying in the complement of the unit disk.  Along with our chosen splitting at $i$ of walls lying inside the unit disk, this determines a simple network $\CW'_\th$ subordinate to a cover $\Si'$.  The path $\wp$ intersects exactly $2(N-1)$ walls of $\CW'_\th$ in the following order.  Let
\[
\De^{op} \coloneqq \{ ij: ji \in \De\},
\]
 defining $\De_a^{op}$, $\De_b^{op}$ analogously.  Then as $\wp$ passes through a neighborhood of $i$ (resp. $-i$) it crosses a series of walls labeled by $\De_b$ (resp. $\De_a$) then another series labeled by $\De_a^{op}$ (resp. $\De_b$).  We may choose branch cuts so that after crossing the $N-1$ walls near $i$, $\wp$ crosses a series of branch cuts labeled by the simple reflections around the elements of $\De_a$, and then crosses an identically labeled series of branch cuts before crossing any of the walls near $-i$.

We now take $\wp$ to have its basepoint at 1, and consider the expansion
\[F(\wp,\CW'_\th) = \sum_{\ba} \fro'(\ti \wp,\CW'_\th,\ba) \bX_\ba.
\]
We claim that the union of the open paths $\ba$ for which $\fro'(\ti \wp,\CW'_\th,\ba) \neq 0$ defines an embedding of $\ol{\CN}_\CW$ into the unit tangent bundle of $\Si'$.  This union is composed of the preimages of the canonical lifts of $\wp$, which we identify with the horizontal paths in $\ol{\CN}_\CW$, and segments with endpoints on these preimages which lie along canonical lifts of the walls crossed by $\wp$, which we identify with the vertical paths in $\ol{\CN}_\CW$.  Specifically, the horizontal edge $[0,1]\times\{\frac{k+1}{N+1}\}$ is identified with the lift of the path $D_k$ introduced in the definition of $\qr$ above.  The vertical walls of the form $\{\frac15\} \times [N+1-i,N-i]$ (resp. $\{\frac15\} \times [N+1-i,N-i]$) are identified with segments of the canonical lifts of the walls labeled by $\De_b^{op}$ (resp. $\De_a$) crossed by $\wp$ near $-i$ (plus small segments along the unit tangent circles above the intersections of $\wp$ and these walls as in \cref{eq:F-one-cross}).  Likewise, the vertical walls of the form $\{\frac35\} \times [N+1-i,N-i]$ (resp. $\{\frac45\} \times [N+1-i,N-i]$) are identified with the segments of the canonical lifts of the walls labeled by $\De_a^{op}$ (resp. $\De_b$) crossed by $\wp$ near $i$. 

After projecting to $\Si'$, this embedding induces an isomorphism of $H_1(\ol{\CN}_\CW,\BZ)$ with $H_1(\Si',\BZ)$.  Letting $\ol{\Si}'$ denote the smooth closure of $\Si'$, the projection to $H_1(\ol{\Si'},\BZ)$ further identifies the face cycles of $\ol{\CN}_\CW$ (taken with counterclockwise orientation) with $\basis$.  The reader may check that this identification is compatible with the identification of each set of cycles with the vertices of $Q_{N-1}$.   

The open paths $\ba$ for which $\fro'(\ti \wp,\CW'_\th,\ba) \neq 0$ are exactly the images of the maximal directed paths in $\CN_\CW$ (that is, directed paths starting and ending on boundary vertices of $\CN_\CW$).  In particular, we have
\[
\Psi_\CW^* \tr_{V_k}\Hol_\wp = \sum_{\{a_i\}_{i=1}^k}\prod_{i=1}^k\wh{y}_{a_i},
\]
where the sum is over nonintersecting $k$-tuples of closed paths with $\fro'(\ti \wp,\CW'_\th,a_i) \neq 0$, hence over nonintersecting $k$-tuples of closed directed paths in $\ol{\CN}_\CW$.  

To finish the proof we must apply $\io_\qr^*$ to this expression and expand it in the coordinates $y_1,\dotsc,y_{2(N-1)}$.  We claim that $\qr(a) = 1$ when $a$ is the image of a closed directed path in $\ol{\CN}_\CW$.  Recall that $\qr$ was defined with respect to a symplectic basis on which $\qr(A_i) = -1$ and $\qr(B_i) = 1$.  All images of closed directed paths in $\ol{\CN}_\CW$ have homology classes of the form $D_i$ or $D_i + A_{i+1}$.  Each $D_i$ can be written as a sum of $B$-cycles, so $\qr(D_i) = 1$.  On the other hand, $D_i$ and $A_{i+1}$ intersect once, so we also have \[\qr(D_i + A_{i+1}) = - \qr(D_i)\qr(A_{i+1}) = 1.\]  

Recall from \cref{sec:paths} that weights of closed directed paths in $\ol{\CN}_{\mb{i}}$ were determined by normalizing the weight of the lowest path to be
\begin{align*}
\wht(p_{\min})=\prod_{i=1}^n(y_{2i-1}y_{2i})^{-\frac{i}{n+1}}.
\end{align*}
The weights of other paths were then determined by the homology class of their difference from the lowest path.  Given our identification of face cycles in $\ol{\CN}_{\mb{i}}$ with $\basis$, we are done once we show that the above expression coincides with the pullback from $\CM_{GL_1}(\ol{\Si})$ of the holonomy $y_{D_N}$.  But this follows from the fact that 
\begin{align*}
ND_N + \sum_{k=1}^N k(\ga_k + \ga'_k) = 0
\end{align*}
in $H_1(\ol{\Si'},\BZ)$, since the cycle $D_0 + \cdots + D_N$ becomes trivial in the closed surface $\ol{\Si'}$ and 
\[
D_j = D_N + \sum_{i=j+1}^{N}(\ga_i + \ga'_i)
\]
 for all $0 \leq j \leq N$.
\end{proof}

\begin{figure}
\includegraphics[scale=.7]{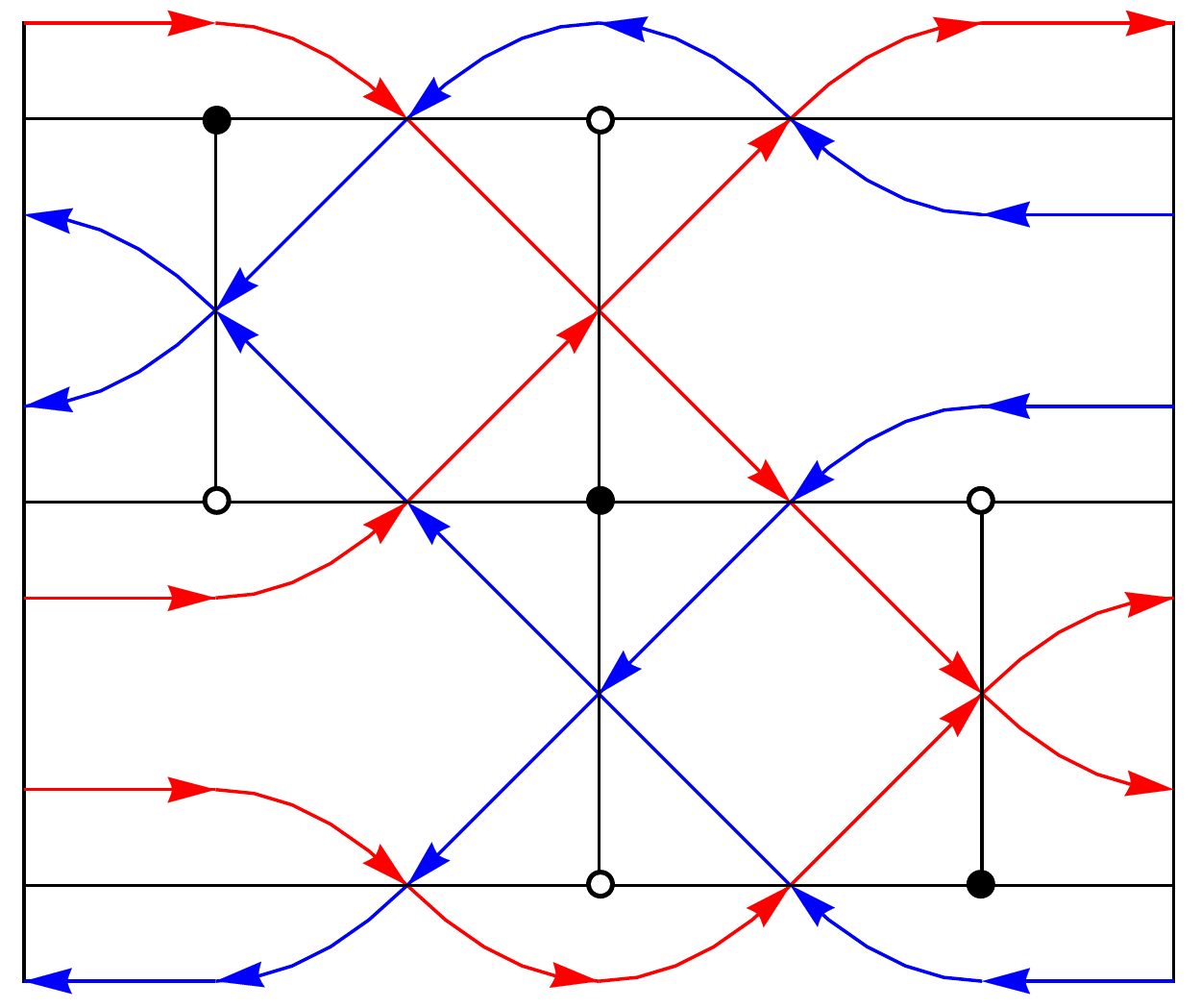}
\caption{The bipartite form of $\ol{\CN}_{\mb{i}}$ and its two zig-zag paths in the $SL_3$ case.}
\label{fig:bipartite}
\end{figure}

\begin{remark}
It is illuminating to view the proof of \cref{thm:holotonian} in the context of the conjugate surface construction of \cite{Goncharov2011}.  An embedding of a bipartite graph $G$ into a surface $S$ defines a ribbon structure on $G$, and the conjugate surface $\wh{S}$ is the ribbon graph obtained by gluing in a half-twist in the middle of every edge of $G$.  The boundary components of $\wh{S}$ are identified with the ``zig-zag paths'' of $G$, as pictured in \cref{fig:bipartite}.  On the other hand, there is a standard way to associate a bipartite surface graph to a directed surface graph \cite{Postnikov2006}.  The above proof implictly identifies the conjugate surface of $\ol{\CN}_{\mb{i}}$ with the periodic Toda spectral curves as topological surfaces.  In particular, the fact that there are exactly two zig-zag paths of $\ol{\CN}_{\mb{i}}$, hence two boundary components of its conjugate surface, corresponds to the fact that the spectral curve has cyclic monodromy at $0$ and $\infty$.  
\end{remark}

\appendix
\section{Background on Cluster Algebras}\label{sec:clusterapp}

In this appendix we fix our conventions regarding cluster algebras and cluster varieties, reviewing the minimal part of the theory we need \cite{Fomin2006}.  We denote cluster variables by $x_i$ and their dual coordinates by $y_i$, as this follows the conventions in the literature on cluster characters most closely.  However, it is also convenient to have notation for the tori on which these provide coordinates, hence we also use the language of $\CA$- and $\CX$-tori at times \cite{Fock2003}.

Let $Q$ be a quiver with vertices $Q_0 = \{1,\dotsc,n\}$ and no oriented 2-cycles.  To $Q$ we associate a pair of tori
\[
\CA_Q = \Spec \BC[x_1^{\pm 1},\dotsc,x_n^{\pm 1}], \quad \CX_Q= \Spec \BC[y_1^{\pm 1},\dotsc,y_n^{\pm 1}]
\]
and a map
\[
p_Q: \CA_Q \to \CX_Q,\quad p_Q^*: y_i \mapsto \prod_{j=1}^n x_j^{Q_{ij}}.
\]
Here
\[
Q_{ij} = |\{\text{arrows }i \to j\text{ in }Q\}| - |\{\text{arrows }j \to i\text{ in }Q\}|
\]
is an entry of the signed adjacency matrix of $Q$.  The data of $Q$ is equivalent to the data of a lattice $\Ga$ with a basis $\{e_i\}$ and a skew-symmetric form: to such a lattice we associate a quiver by setting
\[
Q_{ij} = \langle e_i, e_j \rangle.
\]
From this point of view
\[
\CA_Q = \Hom(\Ga^*,\BC^*),\quad \CX_Q = \Hom(\Ga,\BC^*),
\]
their coordinates coming from the basis $\{e_i\}$ and its dual.  The skew-symmetric form induces a map $\Ga \to \Ga^*$ by $e_i \mapsto \langle e_i, -\rangle$, which gives rise to the map $p_Q$.

Given $1 \leq k \leq n$, the mutation $\mu_k(Q)$ of $Q$ at $k$ is the quiver with $\mu_k(Q)_0 = Q_0$ and
\begin{align*}
\mu_k(Q)_{ij} = \begin{cases}
-Q_{ij} & i = k \text{ or } j=k \\
Q_{ij} + \frac12(|Q_{ik}|Q_{kj} + Q_{ik}|Q_{kj}|) & i,j \neq k. \xqedhere{3.98cm}{\qedhere}
\end{cases}
\end{align*}

To the mutation $\mu_k$ we associate a pair of birational maps, called cluster transformations and also denoted by $\mu_k$.  These satisfy
\[
\begin{tikzcd}
\CA_{Q} \arrow[dashed]{r}{\mu_k} \arrow{d}{p_{Q}} & \CA_{Q'} \arrow{d}{p_{Q'}} \\
\CX_{Q} \arrow[dashed]{r}{\mu_k} & \CX_{Q'}
\end{tikzcd},
\]
where $Q'=\mu_k(Q)$, and are defined explicitly by
\begin{gather*}
\mu_k^*(x'_i) = \begin{cases}
x_i & i \neq k \\
\displaystyle x_k^{-1}\biggl(\prod_{Q_{kj}>0}x_j^{Q_{kj}} + \prod_{Q_{kj}<0}x_j^{-Q_{kj}}\biggr) & i = k
\end{cases}
\end{gather*}
and
\begin{gather*}
\mu_k^*(y'_i) = \begin{cases}
y_iy_k^{[Q_{ik}]_+}(1+y_k)^{-Q_{ik}} & i \neq k  \\
y_k^{-1} & i = k,
\end{cases}
\end{gather*}
where $[Q_{ik}]_+\coloneqq\mathrm{max}(0,Q_{ik})$.  The map $\mu_k: \CX_Q \to \CX_{Q'}$ intertwines the Poisson bracket
\[
\{y_i,y_j\} = Q_{ij}y_iy_j
\]
and its counterpart on $\CX_{Q'}$.  

The $\CA$- and $\CX$-spaces $\CA_{|Q|}$ and $\CX_{|Q|}$ are the schemes obtained from gluing together along sequences of cluster transformations all such tori obtained iterated mutations of $Q$.  The upper cluster algebra $\ol{A}(Q)$ is the algebra of regular functions on $\CA_{|Q|}$, or equivalently
\[
\ol{A}(Q) := \BC[\CA_{|Q|}] = \bigcap_{Q' \sim Q} \BC[\CA_{Q'}] \subset \BC(\CA_{|Q|}).
\]
The cluster algebra $A(Q)$ is the subalgebra of the function field $\BC(\CA_{|Q|})$ generated by the collection of all cluster variables of seeds mutation equivalent to $Q$; the fact that $A(Q) \subset \ol{A}(Q)$ is referred to as the Laurent phenomenon.  Since cluster transformations intertwine the indicated Poisson structures on each $\CX$-torus, these assemble into a global Poisson structure on $\CX_{|Q|}$.

\section{Cluster Characters and Quivers with Potential}

In this appendix we recall the Jacobian algebra of a quiver with potential \cite{Derksen2008} and the cluster character of a module over a Jacobian algebra \cite{Palu2008}.  

Given a quiver $Q$ with oriented cycles, $\wh{\BC Q}$ is the completion of the path algebra $\BC Q$ with respect to the ideal generated by paths of length greater than zero.  A potential $W$ is an element of $Pot(\BC Q)$, the closure in $\wh{\BC Q}$ of the ideal generated by all nontrivial cyclic paths in $\BC Q$.  Given an arrow $a$ of $Q$, the cyclic derivative $\del_a:  Pot(\BC Q) \to \wh{\BC Q}$ is the unique continuous linear map such that
\[
\del_a(c) = \sum_{c = paq} qp
\]
for any cycle $c$, where the sum is taken over all decompositions of $c$ with $p,q$ being possibly length zero paths.  We call a pair $(Q,W)$ a quiver with potential, and always assume $Q$ has no oriented 2-cycles.  The Jacobian algebra $J(Q,W)$ is the quotient of $\wh{\BC Q}$ by the closure of the ideal generated by all cyclic derivatives of $W$.  We say $(Q,W)$ is Jacobi finite if $J(Q,W)$ is finite-dimensional, and from now on assume this is the case.

We write $J(Q,W)\textrm{-mod}$ for the category of finite-dimensional left $J(Q,W)$-modules; equivalently this is the category of finite-dimensional representations of $Q$ satisfying the relations imposed by the cyclic derivatives of $W$.  Given a labeling of $Q_0$ by $\{1,\dots,n\}$,  we write $S_j$ for the simple $J(Q,W)$-module supported at the $j$th vertex of $Q$, $P_j$ for its projective cover, and $I_j$ for its injective envelope.  Below we freely conflate the dual cluster variable $y_i$ with its pullback $p_Q^*y_i = \prod_{j = 1}^n x_j^{Q_{ij}}$ to $\BC[x_1^{\pm1},\dots,x_n^{\pm1}]$.

\begin{defn}
Let
\[
0 \to M \to \bigoplus_{j=1}^n I_j^{\oplus b_j} \to \bigoplus_{j=1}^n I_j^{\oplus a_j}
\]
be a minimal injective copresentation of a $J(Q,W)$-module $M$.  Then the coindex of $M$ is defined by
\[ x^{-\coind M} = \prod_{j=1}^n x_j^{a_j-b_j}. \]
\end{defn}

Given a dimension vector $e \in \BZ^n$ and a $J(Q,W)$-module $M$, the quiver Grassmannian $\Gr_e M$ is the variety of $e$-dimensional subrepresentations of $M$.  It is a projective subvariety of a linear Grassmannian of $M$, and below we write $\chi(\Gr_e M)$ for its topological Euler characteristic.  If $e_1,\dotsc,e_n \in \BZ^n$ are the dimension vectors of the the simple modules and $e = \sum a_j e_j$, we write
\[
y^e = \prod_{j=1}^n y_j^{a_j}.
\]

\begin{defn}\label{def:CC}
The cluster character $CC(M)$ of a $J(Q,W)$-module $M$ is the Laurent polynomial
\[
CC(M) = x^{- \coind M}\sum_{e \in \BZ^n} \chi(\Gr_e M) y^{e} \in \BC[x_1^{\pm1},\dots,x_n^{\pm1}].
\]
\end{defn}

This elementary definition will suffice for our purposes but deviates from \cite{Palu2008} as follows.  To obtain a seed-independent picture one may consider the cluster category $\CC$, constructed for Jacobi-finite quivers with potential in \cite{Amiot2008}.  The cluster category is triangulated, 2-Calabi-Yau, and endowed with a canonical cluster-tilting object $T$.  We have $\End_\CC(T)\cong J(Q,W)$, and the functor $\Hom_\CC(T,-)$ induces an equivalence 
\[ J(Q,W)\textrm{-mod} \cong \CC/\< \Si T \>.\] Here $\Si$ is the suspension functor of $\CC$ and $\< \Si T \>$ the ideal of all morphisms factoring through the additive subcategory generated by $\Si T$.  The cluster character can then be defined for arbitrary objects in $\CC$, and the cluster character of an indecomposable object $X$ that is not a summand of $\Si T$ agrees with \cref{def:CC} applied to $\Hom(T,X)$.

The notion of a cluster character originates in \cite{Caldero2004} for Dynkin quivers, and is treated in increasing generality in \cite{Caldero2006,Palu2008,Plamondon2011}.  It is motivated by the following property:

\begin{thm}
For a Jacobi-finite quiver with nondegenerate potential, the cluster character defines a bijection between reachable rigid indecomposable $J(Q,W)$-modules and non-initial cluster variables of $A(Q)$.
\end{thm}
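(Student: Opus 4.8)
The plan is to deduce the statement from the seed-independent cluster category $\CC$ of \cite{Amiot2008} together with Palu's cluster character on it \cite{Palu2008}. First I would recall that for a Jacobi-finite quiver with nondegenerate potential $\CC$ is $\Hom$-finite, $2$-Calabi--Yau, and carries a canonical cluster-tilting object $T = T_1 \oplus \cdots \oplus T_n$ with $\End_\CC(T) \cong J(Q,W)$; the functor $\Hom_\CC(T,-)$ kills exactly the objects in $\mathrm{add}(\Si T)$, and every indecomposable rigid object of $\CC$ is either one of the $\Si T_i$ or has nonzero image, which is then an indecomposable rigid $J(Q,W)$-module, this assignment being a bijection on isomorphism classes. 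The cluster character extends to all of $\CC$ with $CC(\Si T_i) = x_i$, agreeing on the module subcategory with the definition of $CC$ given above. The essential tool is Palu's multiplication formula: when $\dim_\BC \Ext^1_\CC(X,Y) = 1$, with non-split triangles $Y \to E \to X \to \Si Y$ and $X \to E' \to Y \to \Si X$, one has $CC(X)\,CC(Y) = CC(E) + CC(E')$. Given this, it suffices to prove that $CC$ restricts to a bijection from the \emph{reachable} indecomposable rigid objects of $\CC$ --- those occurring as summands of a cluster-tilting object obtained from $T$ by a finite sequence of mutations --- onto the set of cluster variables of $A(Q)$; the theorem is then obtained by discarding the $n$ objects $\Si T_i$, which carry the $n$ initial variables $x_i$.

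For surjectivity I would induct on the mutation distance from $T$. The base case $CC(\Si T_i) = x_i$ produces the initial cluster. For the inductive step, I would invoke the compatibility of categorical mutation of cluster-tilting objects in $\CC$ with the Derksen--Weyman--Zelevinsky mutation of $(Q,W)$ (due to Buan--Iyama--Reiten--Smith and Keller--Yang, surveyed in \cite{Keller2012}), which uses nondegeneracy and shows that $\End_\CC(\mu_k T)$ has quiver $\mu_k(Q)$ after reduction. Applying the multiplication formula to the exchange triangles $T_k^* \to E \to T_k \to \Si T_k^*$ and $T_k \to E' \to T_k^* \to \Si T_k$ at the mutated summand $T_k^* = \mu_k(T)_k$, for which $\dim_\BC \Ext^1_\CC(T_k, T_k^*) = 1$, one checks that $CC(E)$ and $CC(E')$ are precisely the two Laurent monomials appearing in the exchange relation for $x_k$ in $A(Q)$. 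Hence $\bigl(CC(\mu_k(T)_1), \dots, CC(\mu_k(T)_n)\bigr)$ is obtained from $\bigl(CC(T_1), \dots, CC(T_n)\bigr)$ by the cluster mutation $\mu_k$, so by induction every cluster of $A(Q)$, hence every cluster variable, lies in the image of $CC$.

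For injectivity I would use $g$-vectors. By the definition of $CC$ the summand corresponding to the zero dimension vector is the monomial $x^{-\coind X}$, so $CC(X)$ determines the coindex (equivalently the $g$-vector) of the reachable rigid object $X$, and a rigid object of the $2$-Calabi--Yau category $\CC$ is determined up to isomorphism by this datum \cite{Plamondon2011}. On the cluster-algebra side, the variables produced in the previous step have pairwise distinct $g$-vectors, by the sign-coherence and separation of $g$-vectors established in \cite{Derksen2008} through the $E$-invariant of decorated representations and the $F$-polynomial formalism. Therefore $CC$ is injective on reachable indecomposable rigid objects, and combined with surjectivity it is a bijection onto the cluster variables of $A(Q)$; restricting to modules and removing the initial variables yields the stated bijection. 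The hard part is this injectivity step: both that the coindex separates rigid objects of $\CC$ --- which needs the Plamondon framework together with the identification of categorical and DWZ mutation --- and that $g$-vectors separate cluster variables --- which rests on the $E$-invariant and $F$-polynomial analysis of \cite{Derksen2008} --- lie substantially deeper than the formal manipulations of the cluster character used for surjectivity.
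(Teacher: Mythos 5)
The paper does not prove this statement itself; it records it as background and cites \cite{Caldero2006,Palu2008,Plamondon2011}, and your argument is exactly the standard proof from those references (Amiot's cluster category, Palu's multiplication formula applied to exchange triangles together with the compatibility of categorical and DWZ mutation for surjectivity, and separation by indices/$g$-vectors for injectivity). Your outline is correct and takes essentially the same route as the paper's sources.
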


\section{Background on Spectral Networks}\label{sec:snetapp}

In this appendix we recall the notation and basic notions of spectral networks, referring the reader to \cite{Gaiotto} for a comprehensive exposition. As it is needed in the body of the paper, we explicitly describe how the formalism must be extended to remove the assumption of simple ramification.  We will refer to spectral networks subordinate to branched covers with simple ramification as simple spectral networks.

Let $C$ be an orientable real surface, possibly with boundary, together with a nonempty set of points $\fs_n$.  We refer to the $\fs_n$ as singular points, and to those not on the boundary of $C$ as punctures.  Let $\Si \to C$ be an $N$-sheeted branched cover such that all singular points lie in the complement $C'$ of the branch locus.  A spectral network subordinate to the covering $\Si$ is a collection
\[
\CW = ( o(\mathfrak{s}_n), \{z_\mu\}, \{p_c\} )
\]
of the following data:

\begin{description}
\item[D1] A partially ordered subset $o(\fs_n)$ of sheets of $\Si$ in a neighborhood of each singular point $\fs_n$.  Each $o(\mathfrak{s}_n)$ must contain
at least two elements, and if $\fs_n$ is a puncture $o(\mathfrak{s}_n)$ must contain
$N$ elements.
\item[D2] A locally finite subset $\{z_\mu\} \subset C'$, referred to as joints.
\item[D3] A countable collection $\{p_c\}$ of closed segments (images of embeddings of $[0,1]$) in $C$, referred to as walls.  For each orientation $o$ of $p_c$, the segment carries a label consisting of an ordered pair of distinct sheets of $\Si$ over $p_c$.  Reversing the orientation of $p_c$ should reverse the order of the sheets, so $p_c$ has two labels which could be written as $(o, ij)$ and $(-o, ji)$.
\end{description}

The data must satisfy the following conditions:

\begin{description}

\item[C1] Two segments $p_c$ are not allowed to intersect away form their endpoints unless no sheet of $\Si$ appears in the labels of both segments.  The endpoints of the $p_c$ are at joints, branch points, or singular points.  Any compact subset of $C'$
intersects only finitely many of the $p_c$.

\item[C2] If the monodromy around a branch point $\fb$ is a single $K$-cycle, there is a neighborhood of $\fb$ in which $\CW$ is equivalent to $\CW_K$ in a neighborhood of 0 (see \cref{def:stdknet}).  That is, there should be a 1-parameter family of equivalent networks $\CW_t$ such that $\CW_0 = \CW$ and the restriction of $\CW_1$ to some neighborhood of $\fb$ is diffeomorphic with the restriction of $\CW_K$ to a neighborhood of $0$.  In a neighborhood of a general branch point $\fb$ of $\CW$ should in the same sense be locally equivalent to a superposition of $\CW_K$'s according to the cycle decomposition of the monodromy around $\fb$.  

\begin{figure}
\begin{tikzpicture}[decoration={snake,amplitude=2}]
\newcommand*{\rad}{3};
\newcommand*{\scl}{.9};
\newcommand*{\hpos}{5.5};
\node (n3) [matrix,cells={scale=\scl}] at (0,0) {
\foreach \n in {0,1,...,7} {\draw [-stealth',thick] (0,0) -- (30+\n*120:\rad);};
\foreach \n/\l in {0/01,1/01,2/10} {\node at (30+\n*120:\rad+.3) {\l};};
\draw [decorate,color=orange,thick] (0,0) -- (90:\rad+.3);
\node [color=orange] at (90:\rad+.6) {(01)};
\node [cross out,draw=orange,line width=.8mm,rounded corners] at (0,0) {};
\\
};
\node (n3) [matrix,cells={scale=\scl}] at (8,0) {
\foreach \n in {0,1,...,7} {\draw [-stealth',thick] (0,0) -- (22.5+\n*45:\rad);};
\foreach \n/\l in {0/01,1/21,2/01,3/21,4/20,5/10,6/12,7/02} {\node at (22.5+\n*45:\rad+.3) {\l};};
\draw [decorate,color=orange,thick] (0,0) -- (90:\rad+.3);
\node [color=orange] at (90:\rad+.6) {(012)};
\node [cross out,draw=orange,line width=.8mm] at (0,0) {};
\\
};
\node (n5) [matrix,cells={scale=\scl}] at (4,-6) {
\foreach \n in {0,1,...,11} {\draw [-stealth',thick] (0,0) -- (15+\n*30:\rad);};

\foreach \n/\l/\m in {0/02/43,5/31/40,6/30/21,11/03/12} {\node at (15+\n*30:\rad+.4) {$\begin{matrix}\text{\l}\\ \text{\m}\end{matrix}$};};

\foreach \n/\l/\m in {1/42/01} {\node at ($(15+\n*30:\rad+.4)+(.1,-.1)$) {$\begin{matrix}\text{\l}\\ \text{\m}\end{matrix}$};};
\foreach \n/\l/\m in {4/41/32} {\node at ($(15+\n*30:\rad+.4)+(-.1,-.1)$) {$\begin{matrix}\text{\l}\\ \text{\m}\end{matrix}$};};
\foreach \n/\l/\m in {7/20/34} {\node at ($(15+\n*30:\rad+.4)+(-.1,.1)$) {$\begin{matrix}\text{\l}\\ \text{\m}\end{matrix}$};};
\foreach \n/\l/\m in {10/13/04} {\node at ($(15+\n*30:\rad+.4)+(.1,.1)$) {$\begin{matrix}\text{\l}\\ \text{\m}\end{matrix}$};};

\foreach \n/\l/\m in {3/42/01,2/41/32,8/24/10,9/14/23} {\node at (15+\n*30:\rad+.3) {\l, \m};};

\draw [decorate,color=orange,thick] (0,0) -- (90:\rad+.5);
\node [color=orange] at (90:\rad+.8) {(01234)};
\node [cross out,draw=orange,line width=.8mm] at (0,0) {};
\\
};
\end{tikzpicture}
\caption{The spectral networks $\CW_2$, $\CW_3$, and $\CW_5$ which serve as local models for a spectral network at a branch point with $2$-cyclic, $3$-cyclic, and $5$-cyclic monodromy, respectively.  The walls of $\CW_5$ come in pairs with distinct labels but whose locations coincide.}
\label{fig:stdknet}
\end{figure}

\item[C3] Around each joint $z_\mu$ there is neighborhood in which $\CW$ is equivalent to one of the local models in \cref{fig:junction}.

\begin{figure}
\begin{tikzpicture}
\newcommand*{\farr}{.3};
\newcommand*{\sarr}{.76};
\node [matrix] at (0,0) {
\foreach \s/\t/\l in {{0,2}/{3,0}/{ij},{0,0}/{3,2}/{kl}} {\draw [thick,decoration={markings, mark=at position \farr with \arrow{stealth'}, mark=at position \sarr with \arrow{stealth'}},postaction=decorate] (\s) -- (\t) node[pos=.1,above]{\textsf{\l}} node[pos=0.9,above]{\textsf{\l}};};\\%
};
\node [matrix] at (4.5,0) {
\foreach \s/\t/\l in {{0,2}/{3,0}/{ij},{0,0}/{3,2}/{jk}} {\draw [thick,decoration={markings, mark=at position \farr with \arrow{stealth'}, mark=at position \sarr with \arrow{stealth'}},postaction=decorate] (\s) -- (\t) node[pos=.1,above]{\textsf{\l}} node[pos=0.9,above]{\textsf{\l}};};
\draw [thick,decoration={markings, mark=at position .5 with \arrow{stealth'}},postaction=decorate] (1.5,1) -- (3,1) node[pos=0.9,above]{\textsf{ik}};
\\
};
\node [matrix] at (9,0) {
\foreach \s/\t/\l in {{0,2}/{3,0}/{ij},{0,1}/{3,1}/{ik},{0,0}/{3,2}/{jk}} {\draw [thick,decoration={markings, mark=at position \farr with \arrow{stealth'}, mark=at position \sarr with \arrow{stealth'}},postaction=decorate] (\s) -- (\t) node[pos=.1,above]{\textsf{\l}} node[pos=0.9,above]{\textsf{\l}};};\\
};
\end{tikzpicture}
\caption{The local models of a junction in a spectral network.}
\label{fig:junction}
\end{figure}

\item[C4] If an oriented segment with label $(o,ij)$ ends at a singular point $\mathfrak{s}_n$,
then $i$ and $j$ lie in the ordered subset $o(\mathfrak{s}_n)$,
and with respect to the ordering of $o(\mathfrak{s}_n)$ we have $i < j$.
\end{description}

The local models referred to above for nonsimple branch points are as follows.  Here we write $\om  = e^\frac{2\pi i}{N}$ and $\om_{ij} = \om^i - \om^j$.

\begin{defn}\label{def:stdknet}
Fix $N>2$.  The spectral network $\stdknet$ is subordinate to the branched cover
\[
\Si_N \coloneqq \{ \la \in T^*\BC: \la^N - z dz^N =0\}
\]
of $\BC$.  Its walls lie along the rays $\{te^{i\vphi}:t \geq 0\}$ such that
\[
2(N+1)(\vphi - \frac\pi2)\equiv0 \text{ mod } 2\pi.
\]
We label the sheets of $\Si_N$ by $\{0,1,\dotsc,N-1\}$ by choosing a generic branch cut from $0$ to $\infty$ and letting $z^{1/N}$ refer to the branch which takes positive real values on $\BR_+$.  The $k$th sheet of $\Si_N$ corresponds by the differential
\[
\la_k = \om^k z^{1/N} dz.
\]
There is an outwardly oriented $ij$-wall lying along the ray of phase $\vphi$ if and only if $\vphi = \frac{-N}{N+1}\arg \om_{ij}$, where the $(N+1)$st root is computed in the sector away from the branch cut.
\end{defn}

In general any wall of $\stdknet$ will be coincident with several other walls with distinct labels.  It is sometimes convenient to refer to a maximal collection of coincident walls as a multiwall.  Then for $N>2$ the network $\stdknet$ has $N^2-1$ walls lying along $2(N+1)$ multiwalls, the latter being in correspondence with the values taken by $\arg(\om_{ij})$ for $0 \leq i,j \leq N-1$.  

The natural examples of spectral networks are associated with branched covers $\Si \subset T^*C$ of a punctured Riemann surface $C$ embedded inside its cotangent bundle.  For our purposes, it is sufficient in the following definition to assume $\th$ is generic and omit a detailed discussion of the behavior of $\Si$ near the punctures of $C$.  

\begin{defn}
For a generic phase $\th$, $\CW_{\Si,\th}$ is the minimal spectral network subordinate to $\Si$ whose walls have the following property.  If $p_c$ carries a local label $(o,ij)$ and $\la^{(i)}$, $\la^{(j)}$ are the 1-forms associated with sheets $i$ and $j$, then
\[
\arg \<p_c'(t)|\la^{(i)}-\la^{(j)}\> = \th
\]
for any oriented parametrization of $p_c$.  
\end{defn}

In particular, the walls of $\CW_{\Si,\th}$ are BPS strings in the sense of \cref{def:bpsstring}.  In this notation $\stdknet$ is the spectral network $\CW_{\Si_N,0}$ of phase $0$ subordinate to $\Si_N$.  

\subsection*{Nonabelianization and Path-Lifting}
A spectral network subordinate to $\Si \onto C$ provides a mechanism for producing twisted $GL_N(\BC)$-local systems on $C$ from twisted $GL_1(\BC)$-local systems on $\Si$ (from now on, we simply write $GL_k$ for $GL_k(\BC)$).  Recall that a twisted $GL_N$-local system on a surface $S$ is a rank $N$ vector bundle $E$ on the unit tangent bundle $\ti S$ of $S$ and a parallel transport on $E$ such that the holonomy around any fiber is $-1$.  Any smooth path in $S$ has a canonical lift to $\ti S$, and in practice we often think of a twisted local system as assigning parallel transports to smooth paths in $S$ via their canonical lifts.  

Let $\CM_{GL_N}^{tw}(C)$ and $\CM_{GL_1}^{tw}(\Si)$ be the moduli spaces of twisted local systems on $C$ and $\Si$, the former with possibly irregular singularities at the singular points of $C$.  To a spectral network $\CW$ subordinate to $\Si$ is associated a nonabelianization map
\[
\Psi_\CW: \CM_{GL_1}^{tw}(\Si) \to \CM_{GL_N}^{tw}(C).
\]
In the body of the paper we do not require a careful discussion of the singularities of flat $GL_N$-connections, so we omit the rather lengthy digression this would require.  All that will be important for us is that if is a smooth closed curve in $C$ and $V$ a representation of $GL_N$, we have an unambiguous definition of the pullback
\[
\Psi_{\CW}^* \tr_V\Hol_{\ti\wp} \in \BC[ \CM_{GL_1}^{tw}(\Si)].
\]

The definition of $\Psi_\CW$ uses extra data, soliton sets, associated to the walls $p_c$ of $\CW$.  For $z \in p_c$, a soliton $s(z)$ is an immersion of $[0,1]$ into $\Si$, considered up to regular homotopy, which begins and ends on preimages of $z$.  If $p_c$ carries the label $(o, ij)$, $s(z)$ is compatible with $o$ if it begins on $z^{(i)}$ and ends on $z^{(j)}$ (where $z^{(i)}$ and $z^{(j)}$ are the lifts of $z$ to sheets $i$ and $j$), and its projection to $C$ begins following $p_c$ with orientation $-o$ and ends following $p_c$ with orientation $o$.  For each point $z \in p_c$ and orientation $o$ of $p_c$, the soliton set $\CS^o_c(z)$ is a collection of solitons compatible with $o$.  For a generic $\CW$ it is completely determined by the following conditions:

\begin{description}
\item[ST1] The soliton sets $\CS_c^o(z)$ vary continuously (in the obvious sense) as $z$ varies continuously in $p_c$.  Thus we often write soliton sets simply as $\CS_c^o$.
\item[ST2] If one endpoint of $p_c$ lies on a branch point, its soliton set consists exactly of the following ``light'' soliton $s_c$.  If $p_c$ is labeled $(o,ij)$ with $o$ directed away from $\fb$, $s_c(z)$ is the immersion that begins at $z^{(i)}$, ends at $z^{(j)}$, and whose projection to $C$ lies on $p_c$ except in a small neighborhood of $\fb$.  
\item[ST3] The soliton sets of the walls meeting at a junction $z_\mu$ are related in a standard way.  We refer to \cite[Section 9.3]{Gaiotto} for the detailed rule, as in the body of the paper we only require computations involving \textbf{ST2}.
\end{description}

In general the above rules uniquely determine the soliton sets of $\CW$, for example in the case of a spectral network of a generic phase $\th$ subordinate to some $\Si \subset T^*C$.

The spectral network $\CW$ assigns to each open path $\ti \wp$ in the unit tangent bundle $\ti C$ of $C$ an expression
\[
\bF(\ti \wp,\CW) = \sum_{\ba} \fro'(\ti \wp,\CW,\ba) \bX_\ba.
\]
Here $\bX_\ba$ is a formal variable associated with a homotopy class $\ba$ of open path in $\ti \Si$.  These are considered modulo the relations that $\bX_{\ba \ba'} = \bX_\ba \bX_\ba'$ if $\ba$ and $\ba'$ are concatenable, and that if $\ba$ and $\ba'$ project to the same homotopy class in $\Si$,
\[
\bX_\ba/\bX_{\ba'} = (-1)^{w(\ba,\ba')}.
\]
Here $w(\ba,\ba')$ is the winding number of $\ba^{-1}\ba'$ around the fiber.  The $\fro'(\ti \wp,\CW,\ba)$ are integers uniquely determined by the following properties:
\begin{description}
\item[PL1] If $\ti\wp$, $\ti\wp'$ are concatenable in $\tilde C$,
\begin{equation} \label{eq:composition-bold}
\bF(\ti\wp \ti\wp', \CW) = \bF(\ti\wp, \CW) \bF(\ti\wp', \CW).
\end{equation}
\item[PL2] Let 
\[
\bD(\ti\wp) = \sum_{i=1}^N \bX_{\ti\wp^{(i)}},
\]
where $\ti\wp^{(i)}$ is the canonical lift of $\ti \wp$ to the $i$th sheet of $\wt\Si$.  If $\ti \wp$ does not intersect the preimage of $\CW$ in $\ti C$, then
\[
\bF(\ti \wp,\CW) = \bD(\ti\wp).
\]
\item[PL3] Suppose $\ti \wp$ intersects $\ti \CW$ at exactly one point $\ti z$, whose projection to $C$ we denote by $z$.  If the intersection is not transverse we perturb it to be so.  Let $\ti \wp_+$ and $\ti \wp_-$ be the parts of $\ti \wp$ before and after this intersection, so $\ti \wp = \ti \wp_+ \ti \wp_-$.  Then
\begin{equation} \label{eq:F-one-cross}
 \bF(\ti\wp, \tilde\CW) = \bD(\ti\wp_+) \left(1 + \sum_{s_\nu \in \CS_c(z)} \bX_{\ba(s_\nu,\ti\wp)}\right) \bD(\ti\wp_-),
\end{equation}
where $\ba(s_\nu,\ti\wp)$ is defined as follows.  Each soliton $s_\nu$ has a canonical lift $\ti s_\nu$ to $\ti \Si$.  Let $U_z$ be the unit tangent space at $z$ and $o_+,o_-\in U_z$ the intial and final unit tangent vectors to the projection of $s_\nu$ to $C$.  Thus $o_+$ and $o_-$ point in opposite directions along a wall of $\CW$, and $\ti s_\nu$ starts and ends at their canonical lifts $o_+^{(i)}, o_-^{(j)}$ to $U_{z^{(i)}}, U_{z^{(j)}}$.  Let $A_+$ and $A_-$ be paths in $U_z$ from $z$ to $o_+$ and from $o_-$ to $z$, such that their composition is homotopic to a simple arc around the half of $U_z$ containing $\ti z$.  Then 
\[
 \ba(\nu,\ti\wp) = A^{(i)}_+ \tilde s_\nu(z) A^{(j)}_-.
\]
\end{description}

We refer to the assignment
\[
\ti \wp \mapsto \bF(\ti \wp,\CW)
\]
as the path-lifting rule defined by $\CW$.  Its essential property is that if $\ti\wp$ and $\ti\wp'$ are two paths on $\tilde C$ that project to the same homotopy class on $C$,
\begin{equation} \label{eq:twisted-homotopy}
\bF(\ti\wp, \CW) = (-1)^{w(\ti\wp, \ti\wp')} \bF(\ti\wp', \CW).
\end{equation}
This is proved for simple spectral networks in \cite{Gaiotto}, and the general case follows from \cref{cor:simpleres}.

Let $(\CL,\nab^\ab)$ be a twisted $GL_1$-local system on $\Si$.  We write the parallel transport of $\nab^\ab$ along $\ba$ as
\[
\bX_\ba(\nab^\ab) \in \Hom (\CL_{\ba(0)},\CL_{\ba(1)}).
\]
The twisted $GL_N$-local system $(E,\nab) = \Psi_\CW(\CL,\nab^\ab)$ on $C$ is defined as follows.  The vector bundle $E$ is the pushforward of $\CL$ to $\wt C$, so the fiber over $\ti z \in \ti C'$ is
\[
E_{\ti z} = \bigoplus_{i=1}^N \CL_{\ti z^{(i)}}.
\]
If $\ti \wp$ is a path with endpoints $\ti z_1, \ti z_2 \in \ti C'_\CW \coloneqq \ti C' \smallsetminus \ti \CW$, then the parallel transport of $\nab$ along $\ti \wp$ is
\[
\bF(\ti \wp,\CW,\nab^\ab) \coloneqq \sum_{\ba} \fro'(\ti \wp,\CW,\ba) \bX_\ba(\nab^\ab) \in \Hom(E_{\ti z_1},E_{\ti z_2}).
\]
It is demonstrated in \cite{Gaiotto} that this defines the parallel transport of a twisted local system on $C$.

\bibliography{TodaCMP}
\bibliographystyle{alpha}

\end{document}